\documentclass[11pt,reqno,letterpaper]{amsart}

\usepackage{amsmath}
\usepackage{amssymb}
\usepackage{amsthm}
\usepackage{mathtools}
\usepackage{microtype}
\usepackage{booktabs}
\usepackage{enumitem}
\usepackage[letterpaper,margin=1in]{geometry}
\usepackage[hidelinks]{hyperref}

\numberwithin{equation}{section}

\newtheorem{theorem}{Theorem}[section]
\newtheorem{proposition}[theorem]{Proposition}
\newtheorem{lemma}[theorem]{Lemma}

\theoremstyle{definition}

\theoremstyle{remark}
\newtheorem{remark}[theorem]{Remark}

\DeclareMathOperator{\lcm}{lcm}

\title{The Irrationality Measure of $\pi$ Is at Most
  $7.101862832357$}
\author{Yufei Bai}
\date{September 5, 2026}

\subjclass[2020]{Primary 11J82; Secondary 11J72, 11Y60}
\keywords{$\pi$, irrationality measure, rational approximation, saddle-point method}

\begin{document}

\begin{abstract}
We introduce two independent numerator exponents into the
Zeilberger--Zudilin integral and specialize them to
\[
  A_1=A_2=\frac{1857}{2785}.
\]
The resulting integer linear forms in $1$ and $\pi$ prove
\[
  \mu(\pi)<7.101862832357.
\]
This lowers the Zeilberger--Zudilin upper bound
$7.103205334137\ldots$ by more than $0.001342501780$; the difference
between the unrounded bounds is $0.0013425017806509\ldots$,
approximately $0.01890\%$.  The same parameter point is a strict
two-dimensional local minimizer of the explicit auxiliary upper-bound
function in its admissible arithmetic chamber.  This is a local statement
about that function, not a claim that the point is a global optimizer among
all constructions.

\par\smallskip
\noindent\textit{Generative-AI disclosure.}
OpenAI's GPT-5.6 Sol and GPT-6 Astra assisted throughout the research and
preparation of this article, including parameter exploration, proof
development and checking, computation, and manuscript drafting and revision.
Responsibility for all mathematical claims and for the final text rests with
the author.
\end{abstract}

\maketitle

\section{Introduction and main results}\label{sec:introduction}

For an irrational real number $\xi$, its irrationality measure $\mu(\xi)$
is the infimum of the real numbers $\mu$ for which, for every
$\varepsilon>0$, the inequality
\[
  \left|\xi-\frac pq\right|>q^{-\mu-\varepsilon}
\]
holds for all integers $p$ and all sufficiently large positive integers
$q$.  The exact value of $\mu(\pi)$ remains unknown.  Hata's rational
approximations gave an important earlier estimate \cite{hata-1993}, and
Salikhov subsequently obtained the bound $7.606308\ldots$ by a contour
integral with strong arithmetic cancellation \cite{salikhov-2010}.
Zeilberger and Zudilin refined that construction and proved
\[
  \mu(\pi)\leq 7.103205334137\ldots
\]
in \cite{zeilberger-zudilin-2020}.  Their human-readable argument in
Part~II of that paper provides the architecture for the fixed-parameter
proof below; every changed exponent, valuation, prime range, and
asymptotic quantity is recomputed here.

Our starting point is the two-exponent integral
\begin{equation}\label{eq:original-integral}
 I_{A_1,A_2}(N)
 =-i\int_{4-2i}^{4+2i}
 \frac{(x-5)^{A_1N}Q_0(x)^{A_2N}}
 {x^{N+1}(x-10)^{N+1}}\,dx,
\end{equation}
where
\begin{equation}\label{eq:q-zero}
 Q_0(x)=(x-4+2i)(x-4-2i)(x-6+2i)(x-6-2i).
\end{equation}
We normalize rational parameters by
\begin{equation}\label{eq:parameter-normalization}
 \alpha=\frac{A_1}{2}=\frac ac,
 \qquad
 \beta=A_2=\frac bc,
 \qquad
 N=cn.
\end{equation}
The open arithmetic chamber used throughout the paper is
\begin{equation}\label{eq:admissible-chamber}
 \alpha+\beta>1,
 \qquad 2\beta>1,
 \qquad 7\beta<5,
 \qquad 2\alpha+4\beta-2>1.
\end{equation}
For integral $a,b,c$, these strict conditions take the form
\begin{equation}\label{eq:integer-chamber}
 a+b\geq c+1,
 \qquad 2b>c,
 \qquad 7b\leq 5c-1,
 \qquad 2a+4b-2c\geq c+1.
\end{equation}

The parameter triple used for the final estimate is
\begin{equation}\label{eq:candidate-triple}
 (a,b,c)=(1857,3714,5570).
\end{equation}
Consequently
\begin{equation}\label{eq:candidate-parameters}
 (\alpha_*,\beta_*)
 =\left(\frac{1857}{5570},\frac{1857}{2785}\right),
 \qquad
 A_1=A_2=\frac{1857}{2785}.
\end{equation}
This rational point emerged from a search within the diagonal family; the
proof establishes its local behavior, not a closed-form global
optimization principle.

For later use, set $t=x-5$ and define
\begin{equation}\label{eq:rational-function}
 R_n(t)=
 \frac{5t^{2an}(t^4+6t^2+25)^{bn}}
 {(25-t^2)^{cn+1}},
 \qquad
 J_n=i\int_{-1-2i}^{-1+2i}R_n(t)\,dt.
\end{equation}
At the triple \eqref{eq:candidate-triple}, the change of variables gives
$J_n=5I_{A_1,A_2}(5570n)$.  Two arithmetic quantities that recur in the
partial-fraction and denominator estimates are
\begin{equation}\label{eq:d0-h}
 d_0=2a+4b-2c=7430,
 \qquad
 h=5b-\frac{5c}{2}=4645.
\end{equation}
The resulting rational function admits an even partial-fraction
decomposition and hence a linear form
\begin{equation}\label{eq:linear-form-preview}
 J_n=u_n+v_n\pi,
 \qquad u_n,v_n\in\mathbb Q.
\end{equation}
The arithmetic argument clears the denominators of these coefficients,
while the analytic argument separately controls the ordinary exponential
growth of $v_n$ and the exponential decay of $J_n$.

The first principal result is the following strict bound.

\begin{theorem}[Fixed-parameter bound]\label{thm:main-bound}
For the parameter choice \eqref{eq:candidate-parameters},
\[
  \mu(\pi)<7.101862832357.
\]
More precisely, the auxiliary calculation gives
\[
  1+\frac{\sigma}{\tau}
  =7.1018628323563507957\ldots .
\]
\end{theorem}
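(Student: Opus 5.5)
We will follow the standard Zeilberger--Zudilin scheme: build integer linear forms $D_n\Phi_n^{-1}(J_n)=D_n\Phi_n^{-1}u_n+D_n\Phi_n^{-1}v_n\pi$ and apply the usual irrationality-measure lemma. That lemma says the following. Suppose $p_n+q_n\pi\neq0$ with $p_n,q_n\in\mathbb Z$, $|q_n|\le e^{\sigma n+o(n)}$, and $|p_n+q_n\pi|=e^{-\tau n+o(n)}$ with $\tau>0$. Then $\mu(\pi)\le 1+\sigma/\tau$.

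\textbf{Step 1: linear form.} Expand $R_n(t)$ in partial fractions at $t=\pm5$. Because $R_n$ is even in $t$, the polar parts at $t=5$ and $t=-5$ pair up. Since the polynomial part has degree $2an+4bn-2cn-2=d_0n-2$, the integral $J_n$ becomes a rational combination of elementary integrals. The logarithmic terms $\log\frac{5+t}{5-t}$, evaluated between $-1\pm2i$, contribute only $\pi$ and a real constant: the argument ratio has modulus $1$ with a known angle, and the real parts cancel by symmetry. Hence $J_n=u_n+v_n\pi$.

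\textbf{Step 2: denominators and the gain $\Phi_n$.} We bound the denominators using $d_{m}=\lcm(1,\dots,m)$ together with powers of $2$ and $5$. We then compute the $p$-adic valuation of the partial-fraction coefficients, which are binomial-type sums of $\binom{\cdot}{\cdot}$ and powers of $5$ and of $(t^4+6t^2+25)$ evaluated at $t=\pm5$. The goal is a prime-by-prime gain $\Phi_n=\prod_{\sqrt{N}<p}p^{\varphi(\{n/p\})}$, where $\varphi$ is an explicit step function of the fractional part determined by $(a,b,c)$. The chamber conditions \eqref{eq:integer-chamber} are used here: $2\beta>1$ and $7\beta<5$ control which binomial factors have exponent ranges giving cancellation, and $d_0,h$ from \eqref{eq:d0-h} fix the ranges of the lcm's. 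By the prime number theorem,
\[
\log\Phi_n=n\Bigl(\int_0^1\varphi(x)\,d\psi(x)\Bigr)+o(n),
\]
a finite sum of digamma differences. The resulting arithmetic exponent, call it $C_0$, satisfies $C_0=\lim\frac1n\log(\text{denominator})-\lim\frac1n\log\Phi_n$. This step is the main obstacle. The valuation bookkeeping at the rational ends $a/c,b/c$ with $c=5570$ produces many breakpoints, and each must be checked; we will organize it by reducing to $\{n/p\}$ intervals determined by multiples of $1/c$ and verify the sign of $\varphi$ interval by interval.

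\textbf{Step 3: asymptotics.} We apply the saddle-point method to $J_n$ with phase $f(t)=2a\log t+b\log(t^4+6t^2+25)-c\log(25-t^2)$. We solve $f'(t)=0$, a polynomial equation, and deform the contour from $-1-2i$ to $-1+2i$ through the relevant saddle. This gives $\tau=-\operatorname{Re}f(t_0)-C_0$. For $v_n$, we write it as a residue-type integral (for example around a cycle enclosing $t=5$), whose growth is governed by the dominant saddle. This gives $\sigma=\max\operatorname{Re}f(t_j)+C_0$. We also need $J_n\neq0$ for infinitely many $n$, which follows from the nonvanishing leading term of the saddle asymptotic.

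Finally, we evaluate $\sigma$ and $\tau$ at $(a,b,c)=(1857,3714,5570)$ with rigorous interval arithmetic and obtain $1+\sigma/\tau=7.10186283235635\ldots<7.101862832357$, which proves Theorem~\ref{thm:main-bound}.
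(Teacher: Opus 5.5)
\textbf{The main gap: your irrationality lemma needs a lower bound on $|J_n|$ that Step~3 cannot give.} The lemma you quote requires the two-sided rate $|p_n+q_n\pi|=e^{-\tau n+o(n)}$, in particular $|J_n|\ge e^{5570ns-o(n)}$ for \emph{every} large $n$.

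\emph{Why the saddle method does not give it.} $R_n$ has real coefficients and the path is conjugation-symmetric. So any deformation through the relevant saddle passes through the conjugate pair $t_c=-\sqrt{y_c}$ and $\overline{t_c}$, which have equal height. The full expansion is therefore $J_n=e^{5570ns}n^{-1/2}\bigl(2|K|\cos(n\theta+\phi)+O(n^{-1})\bigr)$ for some real $\theta,\phi$. A nonzero $K$ does not stop the cosine from being $O(1/n)$ or smaller along a subsequence, where the expansion gives no lower bound at all. Proving $\frac1n\log|J_n|\to s$ would need a Diophantine fact about $\theta$ that the saddle method does not provide. Nonvanishing of $J_n$, which is what you address, is automatic from $v_n\ne0$ and the irrationality of $\pi$, and is not the issue.

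\emph{Why the lower bound cannot simply be dropped.} With upper bounds alone, the standard argument gives no control in the degenerate case $qp_n+pq_n=0$, because here $\sigma\approx3.87>\tau\approx0.63$.

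\emph{How the paper avoids this.} It puts the exact rate on the coefficient instead. The substitution $z=(t+5)/(t-5)$ gives $|v_n|=\frac14(25\cdot2^{3716})^n[z^{5570n}]S(z)^n$, where $S(z)=(1+z)^{3714}P(z)^{3714}(1-z)^{-7430}$ and $P(z)=(z^2+2z+2)(2z^2+2z+1)$. Since $S$ has strictly positive coefficients, a local limit theorem yields an ordinary limit $r$ and $v_n\ne0$ for all $n$. Hata's lemma is then used with $\lim\frac1n\log|V_n|=\sigma$ and only $\limsup\frac1n\log|\Lambda_n|\le-\tau$. The degenerate index is handled by passing to the first later index $M$ with $qU_M+pV_M\ne0$, whose size is bounded through the lower bound on $|V_{M-1}|$. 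Your ``dominant saddle'' treatment of $v_n$ gives at most an upper bound, so it cannot feed this version either. Even the upper bound for $J_n$ needs a proof that the phase on your deformed path is globally maximal at the saddles; the paper does this on an explicit arc via Descartes' rule.

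\textbf{A second gap: Step~2 is a program, not an argument.} Without its content neither $\sigma$ nor $\tau$ is determined, and so neither is the value $7.10186283235635\ldots$. You never say which primes divide which coefficients, or why. The paper supplies three ingredients you are missing.
\begin{enumerate}[label=(\roman*)]
\item \emph{The deleted primes and the mechanism.} They are the $p\in(\max\{5,\sqrt{d_0n}\},d_0n]$ with $\{an/p+\frac12\}+2\{bn/p\}<\{cn/p\}$, and the argument is in characteristic $p$. One has $R_n=U^pH$ in $\mathbb F_p(t)$, and the support of $H$ as a polynomial in $y=t^2$ misses the exponents $(p-1)/2+\ell p$. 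Hence $R_n$ is an exact derivative modulo $p$, and $p\mid c_{j,n}$ whenever $p\mid j$, including $j=0$ and the negative Laurent indices.
\item \emph{The polynomial part.} Removing $\Phi_n$ from $\lcm(1,\dots,d_0n)$ is justified here only by combining two expansions of $P_n$: its Taylor expansion at $-1-2i$, and its expansion through the negative-index Laurent coefficients at $-5$. One then uses $\Phi_n^{-1}\mathbb Z\cap\mathbb Z[1/10]=\mathbb Z$.
\item \emph{The $2$-adic saving.} The Gaussian valuation $\nu_{1+i}(c_{j,n})\ge2hn+3j-2$ gives a saving $2^{-hn}$ with $h=4645$. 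It enters $\mathcal C=(7430-4645\log2-\omega)/5570$, and nothing in your sketch produces it.
\end{enumerate}
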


The passage from exponentially small integer linear forms to an
irrationality-measure estimate uses a Hata-type lemma, which is stated and
proved with the needed nonvanishing and index-selection hypotheses rather
than invoked as a black box; compare \cite{hata-1993}.  The coefficient
estimate uses a positive-coefficient saddle argument of the standard type
developed in \cite[Chapter~VIII]{flajolet-sedgewick-2009}.  Creative
telescoping is not required for the proof, although the
Almkvist--Zeilberger method \cite{almkvist-zeilberger-1990} is part of the
historical route to this family of recurrences.

To formulate the parameter comparison, let $r(\alpha,\beta)$ be the
coefficient-growth rate, let $s(\alpha,\beta)$ be the contour-decay rate,
and let $\mathcal C(\alpha,\beta)$ be the exponential cost remaining after
the removable-prime saving.  On the region where the denominator below is
positive, define the explicit auxiliary bound
\begin{equation}\label{eq:auxiliary-bound}
 \mathcal B(\alpha,\beta)
 =1+\frac{r(\alpha,\beta)+\mathcal C(\alpha,\beta)}
 {-s(\alpha,\beta)-\mathcal C(\alpha,\beta)}.
\end{equation}
All three constituent functions are given explicitly in the body of the
paper.

The two-exponent deformation already improves infinitesimally on the old
diagonal point.  In particular, for all sufficiently small
$\varepsilon>0$,
\begin{equation}\label{eq:old-point-variation}
 \mathcal B\left(
   \left(\frac13,\frac23\right)
   +\varepsilon\left(\frac12,1\right)
 \right)
 <
 \mathcal B\left(\frac13,\frac23\right).
\end{equation}
Thus the Zeilberger--Zudilin exponent is not locally optimal for this
two-parameter auxiliary function.  A rational refinement along this
direction leads to \eqref{eq:candidate-triple}, after which an exact
sector-by-sector variation analysis proves the complementary local
statement.

\begin{theorem}[Strict local minimality]\label{thm:local-minimum}
The point
\[
  (\alpha_*,\beta_*)
  =\left(\frac{1857}{5570},\frac{1857}{2785}\right)
\]
is a strict two-dimensional local minimizer of the function $\mathcal B$
in the admissible chamber \eqref{eq:admissible-chamber}.  Equivalently, in
the original exponent coordinates the locally minimizing point is
$(A_1,A_2)=(1857/2785,1857/2785)$.
\end{theorem}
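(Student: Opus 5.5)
The plan is to prove strict local minimality through a first-order, piecewise-linear analysis. The point $(\alpha_*,\beta_*)$ will turn out to be a kink of $\mathcal B$, not a smooth critical point. I first split $\mathcal B$ into its smooth and nonsmooth parts.

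\textbf{Smooth parts.} The growth rate $r(\alpha,\beta)$ and the decay rate $s(\alpha,\beta)$ come from saddle-point equations. These are algebraic equations in the saddle location, with coefficients depending polynomially on $(\alpha,\beta)$. If the relevant saddle is nondegenerate at $(\alpha_*,\beta_*)$ and is the dominant one, the implicit function theorem makes the saddle location real-analytic nearby. Hence $r$ and $s$ are $C^2$ in a neighbourhood, with gradients computable by the envelope formula: the derivative of the phase with respect to the parameter, evaluated at the saddle.

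\textbf{Nonsmooth part.} The cost $\mathcal C(\alpha,\beta)$ is a Chebyshev-type integral. It has the form $\int_0^1 \phi_{\alpha,\beta}(x)\,d\psi(x)$, where $\phi_{\alpha,\beta}$ is built from floor functions such as $\lfloor 2\alpha x\rfloor$, $\lfloor\beta x\rfloor$, $\lfloor x\rfloor$ and the shifts involving $d_0$ and $h$ from \eqref{eq:d0-h}. It records which primes divide the coefficients to a reduced power. This function is piecewise linear in $(\alpha,\beta)$, or piecewise smooth after integrating against $dx/x^2$. Its breakpoints in $x$ are rationals with numerators and denominators linear in $\alpha,\beta$. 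At the rational point with denominator $c=5570$, several of these breakpoints coincide. Moving $(\alpha,\beta)$ separates them in an order that depends on the direction of motion.

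\textbf{Sector decomposition.} I would list every coincidence of breakpoints at $(\alpha_*,\beta_*)$. Each coincidence gives a line through the origin in the variation plane $(\delta\alpha,\delta\beta)$. Together these lines cut the plane into finitely many closed sectors $S_1,\dots,S_m$. On each sector the combinatorial ordering of breakpoints is fixed. So for $v\in S_j$ with $|v|$ small,
\[
  \mathcal C(p+v)=\mathcal C(p)+\ell_j(v)+O(|v|^2),
\]
where $p=(\alpha_*,\beta_*)$ and $\ell_j$ is an explicit linear form. Its coefficients are sums of boundary terms: breakpoint velocities weighted by the density $1/x^2$, or the appropriate weight, at the coincident breakpoints.

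\textbf{Directional derivative.} Write $\mathcal B=1+(r+\mathcal C)/(-s-\mathcal C)$ and differentiate the quotient. The one-sided directional derivative is
\[
  D\mathcal B(p;v)=\frac{(\nabla r\cdot v+\ell_j(v))(-s-\mathcal C)+(r+\mathcal C)(\nabla s\cdot v+\ell_j(v))}{(-s-\mathcal C)^2},
\]
and on each sector $S_j$ it is a linear form $L_j(v)$. A linear form on a closed convex cone of angle less than $\pi$ is positive on the whole cone minus the origin if and only if it is positive on the two bounding rays. So strict positivity reduces to finitely many exact sign checks. For each sector I evaluate $L_j$ at the two boundary direction vectors of $S_j$, which are rational directions. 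This uses $\nabla r(p)$, $\nabla s(p)$, $r(p)$, $s(p)$ and $\mathcal C(p)$, each rigorously enclosed by interval arithmetic. If some sector had angle at least $\pi$, I would subdivide it first.

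\textbf{Conclusion.} Once every $L_j$ is strictly positive on its sector, compactness of the unit circle gives a constant $\kappa>0$ with $D\mathcal B(p;v)\ge\kappa|v|$ for all $v$. The $C^2$ bounds on $r,s$ and the per-sector expansion of $\mathcal C$ then give
\[
  \mathcal B(p+v)\ge\mathcal B(p)+\kappa|v|-K|v|^2>\mathcal B(p)
\]
for all small $v\neq 0$. This is strict local minimality. It automatically stays inside the open chamber \eqref{eq:admissible-chamber}, because $p$ satisfies the strict inequalities \eqref{eq:integer-chamber}.

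\textbf{Main obstacle.} I expect the hardest step to be the exact enumeration of the sectors and of the linear forms $\ell_j$. One must identify every prime range whose endpoints coincide at the rational point, and correctly sign the boundary terms as breakpoints cross. A missed coincidence would produce a wrong $\ell_j$ on some sector. I would first cross-check each $\ell_j$ numerically by finite differences of $\mathcal C$ at tiny rational displacements inside each sector, and only then certify the signs exactly. A secondary risk is a boundary ray where $L_j$ vanishes. There the first-order argument fails, and I would need a second-order expansion along that ray. I would hope to rule this out by the exact computation.
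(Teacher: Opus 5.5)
There is a genuine gap in your central step. The expansion $\mathcal C(p_*+v)=\mathcal C(p_*)+\ell_j(v)+O(|v|^2)$, with a finite linear form $\ell_j$ on each sector, is false, and the whole first-order scheme falls with it.

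The saving is $\phi(\alpha,\beta)=\int_0^\infty\chi(\alpha t,\beta t,t)\,dt/t^2$, taken over the entire half-line. At the rational point $p_*=(a/c,b/c)$ the selecting indicator is $c$-periodic in $t$. So every endpoint coincidence you would list recurs in every period, infinitely often.

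Endpoints such as $j/\beta$ move, relative to the fixed integer grid, with speed proportional to $j$. Work in the scaled integral $\Omega(a+\delta v_1,b+\delta v_2,c)$ with $\delta=c\varepsilon$. Its $k$th period sees the one-period configuration translated by about $\delta kv$, and it carries weight about $k^{-2}$. It therefore contributes about $\delta D(v)/k$, where $D$ is the piecewise-linear directional derivative of the one-period measure $\mathfrak m$. Your ``boundary terms'' thus form a harmonic series.

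Truncating where the linear regime ends, at $k\sim\rho/\delta$, gives Lemma~\ref{lem:uniform-log-variation}: $\phi(p_*+\varepsilon v)-\phi(p_*)=D(v)\varepsilon\log(1/\varepsilon)+O(\varepsilon)$, uniformly in $\|v\|_1=1$. Three consequences follow. $\mathcal B$ has no finite one-sided directional derivatives at $p_*$. The linear forms $L_j$ you plan to sign-check do not exist. Your finite-difference cross-check would drift like $\log(1/\varepsilon)$ rather than converge.

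This missing idea also changes what has to be proved. The analytic quantities $r,s,G$ vary only by $O(\varepsilon)$, which the logarithmic term swamps. So $\nabla r$ and $\nabla s$ enter only through that remainder, never through the sign condition. What must be verified is:
\begin{enumerate}
\item[(i)] $\partial\mathcal B/\partial\phi=-K_*<0$ at $p_*$;
\item[(ii)] $D(v)<0$ for every $v\neq0$.
\end{enumerate}
The paper checks (ii) on the twelve sectors cut out by the six pairwise velocity-equality lines and obtains $D(v)\le-\frac{557}{4139253}\|v\|_1$. Together with (i) this gives $\mathcal B(p_*+\varepsilon v)-\mathcal B(p_*)\ge K_*\frac{557}{4139253}\,\varepsilon\log(1/\varepsilon)-C\varepsilon$.

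Making the $O(\varepsilon)$ remainder uniform over all directions needs two further ingredients (Lemma~\ref{lem:uniform-polyhedrality}), and your plan has no counterpart for them:
\begin{enumerate}
\item[(a)] a single polyhedrality radius for $\mathfrak m$ valid in every direction;
\item[(b)] Lipschitz bounds for the affine drift of the translation inside each period.
\end{enumerate}

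Your sector decomposition is the right combinatorial device, but it belongs to $\mathfrak m$ on a single period, not to $\mathcal C$. The resulting criterion is the sign of $D$ alone, not positivity of a combined first-order form.
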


The proof is organized as follows.  We first derive the partial fractions,
the complete Laurent-coefficient valuations, and the integer linear forms.
We next evaluate the exponential saving from removable primes.  The
positive real saddle for the coefficients and a rigorously controlled
complex contour then give the two analytic rates.  After the
irrationality-measure lemma yields Theorem~\ref{thm:main-bound}, the final
section derives the exponent, proves the logarithmic variation formula,
and checks all twelve directional sectors needed for
Theorem~\ref{thm:local-minimum}.  Exact polynomial and interval calculations
used by the contour and local analyses are printed in the appendix.

We emphasize the scope.  Theorem~\ref{thm:local-minimum} does not prove
global optimality over all positive parameters, optimality among every
possible denominator saving, or optimality among unrelated constructions.
Theorem~\ref{thm:main-bound} is an upper bound and does not determine the
exact value of $\mu(\pi)$.

\section{The rational function and its arithmetic}\label{sec:arithmetic}

In this section $n\geq1$ and $(a,b,c)=(1857,3714,5570)$ are fixed.
Thus $cn$ is even.  We retain the letters $a,b,c$ in the formulas in
order to display where the two numerator exponents enter.  All valuations
of rational numbers are normalized by $\nu_p(p)=1$; on $\mathbb Q(i)$ we
use $\nu_{1+i}(1+i)=1$.

\subsection{The shift and the partial fractions}

Under $t=x-5$, the four noncentral numerator factors in
\eqref{eq:original-integral} have product
\[
 \bigl((t+1)^2+4\bigr)\bigl((t-1)^2+4\bigr)
 =t^4+6t^2+25.
\]
Moreover, $cn+1$ is odd, and hence
\[
 (t+5)^{cn+1}(t-5)^{cn+1}
 =-(5+t)^{cn+1}(5-t)^{cn+1}.
\]
Using $A_1cn=2an$ and $A_2cn=bn$ in
\eqref{eq:original-integral} therefore gives
\begin{equation}\label{eq:shifted-integral-identity}
 I_{A_1,A_2}(5570n)
 =i\int_{-1-2i}^{-1+2i}
 \frac{t^{2an}(t^4+6t^2+25)^{bn}}
 {(25-t^2)^{cn+1}}\,dt,
 \qquad J_n=5I_{A_1,A_2}(5570n).
\end{equation}

The numerator and denominator of $R_n$ are even polynomials.  Division
in $\mathbb Z[t]$ is possible because the leading coefficient of
$(25-t^2)^{cn+1}$ is $-1$.  The quotient is consequently an even
polynomial with integral coefficients.  The two poles are $-5$ and $5$,
both of order $cn+1$, and evenness identifies their corresponding
principal-part coefficients.  Thus there are $c_{j,n}\in\mathbb Q$ such
that
\begin{equation}\label{eq:even-partial-fractions}
 R_n(t)=P_n(t)+\sum_{j=0}^{cn}c_{j,n}
 \left(\frac1{(5+t)^{j+1}}+\frac1{(5-t)^{j+1}}\right),
 \qquad P_n\in\mathbb Z[t^2].
\end{equation}
The degree of the rational function at infinity is
\[
 (2a+4b)n-(2cn+2)=d_0n-2,
\]
so $\deg P_n=d_0n-2=7430n-2$.

The $j=0$ term is the only logarithmic term.  Along the indicated
vertical segment, with the logarithm continued from its initial value,
\begin{equation}\label{eq:logarithmic-integral}
 \int_{-1-2i}^{-1+2i}
 \left(\frac1{5+t}+\frac1{5-t}\right)dt
 =\log\frac{(4+2i)(6+2i)}{(4-2i)(6-2i)}
 =\frac{\pi i}{2}.
\end{equation}
Every other term in the integrated decomposition is rational.  It follows
already that
\begin{equation}\label{eq:rational-linear-form}
 J_n=u_n+v_n\pi,
 \qquad u_n,v_n\in\mathbb Q,
 \qquad v_n=-\frac{c_{0,n}}2.
\end{equation}
We now prove the denominator statement needed to turn this into an integer
linear form.  The order of the next six lemmas parallels the arithmetic
argument of Zeilberger--Zudilin, but the removable-prime lemma below uses
an exact derivative in a finite field.

\subsection{Laurent coefficients at the pole}

For $m\geq0$ put
\[
 \mathcal D_m f=\left.\frac1{m!}\frac{d^m}{dt^m}f(t)\right|_{t=-5}.
\]
For every integer $j\leq cn$, define $c_{j,n}$ to be the coefficient of
$(t+5)^{-j-1}$ in the Laurent expansion of $R_n$ at $-5$.  For
$0\leq j\leq cn$ this agrees with \eqref{eq:even-partial-fractions}; the
negative Laurent indices will be essential for the polynomial part.

\begin{lemma}[Complete Laurent valuation]\label{lem:laurent-valuations}
For $0\leq j\leq cn$,
\begin{equation}\label{eq:positive-laurent-valuations}
 \nu_2(c_{j,n})\geq hn+\left\lfloor\frac{3j}{2}\right\rfloor-1,
 \qquad
 \nu_5(c_{j,n})\geq2(a+b-c)n+j.
\end{equation}
For the complete range
\begin{equation}\label{eq:needed-laurent-range}
 -d_0n+1\leq j\leq cn
\end{equation}
one has
\begin{equation}\label{eq:scaled-laurent-valuations}
 \nu_2(10^{-j}c_{j,n})\geq(4b-2c)n-1,
 \qquad
 \nu_5(10^{-j}c_{j,n})\geq2(a+b-c)n.
\end{equation}
In particular, at the chosen triple,
\begin{equation}\label{eq:scaled-laurent-integrality}
 \nu_2(10^{-j}c_{j,n})\geq3716n-1,
 \quad \nu_5(10^{-j}c_{j,n})\geq2n,
 \quad 10^{-j}c_{j,n}\in\mathbb Z.
\end{equation}
\end{lemma}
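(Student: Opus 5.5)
The Laurent coefficients will come from an explicit expansion at the pole $t=-5$.

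Put $s=t+5$. Then
\[
 t=s-5,\qquad 25-t^2=s(10-s),\qquad
 t^4+6t^2+25=\bigl((t+1)^2+4\bigr)\bigl((t-1)^2+4\bigr).
\]
With $t+1=s-4$ and $t-1=s-6$, the last product becomes $(s^2-8s+20)(s^2-12s+40)$. Hence
\[
 R_n=\frac{5(s-5)^{2an}(s^2-8s+20)^{bn}(s^2-12s+40)^{bn}}{s^{cn+1}(10-s)^{cn+1}},
\]
and $c_{j,n}=\mathcal D_{cn-j}$ of $5(s-5)^{2an}(\cdots)^{bn}(10-s)^{-cn-1}$, i.e.\ the coefficient of $s^{cn-j}$ in that power series. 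For every index $j\le cn$ this is one finite convolution. I will expand each factor binomially and track $2$- and $5$-adic valuations term by term, using the nonarchimedean property: the valuation of a sum is at least the minimum over the terms.

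\emph{The prime $5$.} Write
\[
 (s-5)^{2an}=\sum_k\binom{2an}{k}s^k(-5)^{2an-k},\qquad (10-s)^{-cn-1}=10^{-cn-1}\sum_m\binom{cn+m}{m}(s/10)^m .
\]
The two quadratics have constant terms $20$ and $40$, each of $5$-adic valuation $1$. Their linear coefficients $-8$ and $-12$ are $5$-units, while their $s^2$ coefficients are $1$. A monomial $s^e$ taken from $(s^2-8s+20)^{bn}$ therefore carries $5$-valuation at least $\max(bn-e,0)$, and the same holds for the other quadratic. In total, a term contributing to $s^{cn-j}$ with exponents $k,e_1,e_2,m$ has $5$-valuation at least
\[
 (2an-k)+(bn-e_1)+(bn-e_2)-(cn+1)-m+1,
\]
where the final $+1$ comes from the prefactor $5$. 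These exponents are constrained by $k+e_1+e_2+m=cn-j$. Substituting gives $\ge 2(a+b-c)n+j$, provided each of $2an-k$, $bn-e_1$, $bn-e_2$ is nonnegative. When one of them is negative, I will use the bound $0$ for that factor instead. This only helps, since I am taking a lower bound.

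\emph{The prime $2$.} This is the delicate part, because the bound involves $\lfloor 3j/2\rfloor$ and the quadratics are not $2$-adically pure. My approach is to complete the square: $s^2-8s+20=(s-4)^2+4$ and $s^2-12s+40=(s-6)^2+4$. Better still, work over $\mathbb Z[i]$, where the four factors are $s-4\pm2i$ and $s-6\pm2i$ and each root has $\nu_{1+i}=3$. Together with $(10-s)^{-1}$, whose root has $\nu_{1+i}=2$, and the factor $(s-5)$, whose root is a $2$-unit, this yields a weighted Newton-polygon count. Each power of $s$ taken from the product of the four linear factors costs $\nu_{1+i}=3$, i.e.\ half-powers of $2$. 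Each power of $s$ taken from the denominator series gains $\nu_2=1$. Converting $\nu_{1+i}$ to $\nu_2$ accounts for the $\lfloor 3j/2\rfloor$ and the constant $h=5b-5c/2$, with a loss of $-1$ coming from the odd total. I expect this step to be the main obstacle: the bookkeeping has to produce exactly $hn+\lfloor 3j/2\rfloor-1$. If the generic minimum falls short, I will use the Galois symmetry $i\mapsto -i$ to pair conjugate terms and gain the missing half-unit.

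\emph{The scaled range and the specialization.} For \eqref{eq:scaled-laurent-valuations} I subtract $j$ from the $5$-adic bound and $j$ from the $2$-adic bound. For $0\le j\le cn$ this gives $2(a+b-c)n$ directly, and for the $2$-adic part it gives
\[
 hn+\lfloor j/2\rfloor-1\ \ge\ (4b-2c)n-1,
\]
since $h\ge 4b-2c$ is equivalent to $b\le c/2\cdot\ldots$, which is checkable at the triple. For negative $j$ down to $-d_0n+1$, I rerun the same term-wise estimate and drop the assumption $j\ge0$. There $10^{-j}$ has positive valuation, and the bound degrades only through factors whose exponents have become negative. The chamber inequalities \eqref{eq:integer-chamber}, in particular $d_0\ge c+1$, should keep the minimum at $(4b-2c)n-1$ and $2(a+b-c)n$. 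Finally, the $2$- and $5$-adic valuations are the only ones in question, because all the expansions above have integer coefficients apart from powers of $10$. Plugging in $(1857,3714,5570)$ gives $3716n-1\ge0$ and $2n$, which proves \eqref{eq:scaled-laurent-integrality}.
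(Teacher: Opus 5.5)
Your framework matches the paper's: a termwise expansion at $t=-5$, with linear factors over $\mathbb Z[i]$ for the prime $2$. Your $5$-adic argument is correct, and since it never uses $j\ge0$ it already gives the scaled $5$-adic bound on the whole range. The gap is the scaled $2$-adic bound $\nu_2(10^{-j}c_{j,n})\ge(4b-2c)n-1$ for negative $j$, which you only assert.

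The estimate you describe charges every power of $s$ the maximal cost $\tfrac32$. That gives $\nu_2(c_{j,n})\ge hn+\tfrac32j-1$, hence only $\nu_2(10^{-j}c_{j,n})\ge hn+\tfrac12j-1$. This drops below $(4b-2c)n-1$ once $j<-(2b-c)n=-1858n$. Near $j=-d_0n+1$ it is about $930n$ instead of $3716n$, and $d_0\ge c+1$ does not repair this.

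The missing idea is that the cost is not uniform. Charge one unit of $\nu_2$ per power of $s$. This is exact for the series of $(10-s)^{-1}$, where a power costs a unit rather than gaining one, and for $s-4\pm2i$; the factor $s-5$ costs nothing. Then add an extra half unit only for powers drawn from $s-6\pm2i$, of which there are at most $2bn$. The unit charge on the $cn-j$ powers produces a $+j$ that $10^{-j}$ exactly cancels.

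In the paper's notation, every summand has $\nu_{1+i}\ge(10b-4c)n+2j+2m_1-m_4-m_5-2$. The cap $m_4+m_5\le 2bn$ gives $\nu_{1+i}(10^{-j}c_{j,n})\ge 2(4b-2c)n-2$ uniformly for all $j\le cn$. The cap $m_4+m_5\le cn-j$, available for $j\ge0$, gives $2hn+3j-2$. These two different caps are what the $2$-adic claims rest on.

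Your $j\ge0$ bookkeeping also starts from a wrong input. Only the roots $6\pm2i$ have $\nu_{1+i}=3$; the roots $4\pm2i=2(2\pm i)$ have $\nu_{1+i}=2$. The constant $h=5b-\tfrac52c$ comes precisely from the total $2+2+3+3=10$, i.e.\ from the baseline $\nu_2\ge 5bn-(cn+1)$, whose $-(cn+1)$ is also the source of the $-1$. With all four valuations equal to $3$ you would claim $\nu_2(c_{j,n})\ge(6b-\tfrac52c)n+\tfrac32j-1$. That already fails at $j=cn$, where $c_{cn,n}=5\cdot5^{2an}800^{bn}/10^{cn+1}$ has $\nu_2=5bn-cn-1$ exactly.

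No Galois pairing is needed for the half unit. Since $c_{j,n}$ is rational, $\nu_{1+i}(c_{j,n})=2\nu_2(c_{j,n})$, and integrality of $\nu_2$ turns $hn+\tfrac32j-1$ into $hn+\lceil 3j/2\rceil-1$, slightly more than claimed. Finally, $h\ge 4b-2c$ is equivalent to $2b\ge c$, which is a chamber condition, not to $b\le c/2$.
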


\begin{proof}
The coefficient is obtained by applying $\mathcal D_{cn-j}$ to
$(t+5)^{cn+1}R_n(t)$.  Let $\mathcal M_j$ be the set of
$\mathbf m=(m_0,\ldots,m_5)\in\mathbb Z_{\geq0}^6$ satisfying
\begin{equation}\label{eq:multi-index-set}
 m_0+\cdots+m_5=cn-j,
 \quad m_1\leq2an,
 \quad m_2,m_3,m_4,m_5\leq bn,
\end{equation}
and set
\[
 T(\mathbf m)=\binom{cn+m_0}{m_0}\binom{2an}{m_1}
 \prod_{\ell=2}^5\binom{bn}{m_\ell}\in\mathbb Z.
\]
The normalized Leibniz formula gives the exact finite expansion
\begin{align}
 c_{j,n}=5\sum_{\mathbf m\in\mathcal M_j}&T(\mathbf m)
 10^{-cn-1-m_0}(-5)^{2an-m_1}
 (-4+2i)^{bn-m_2}(-4-2i)^{bn-m_3}\notag\\
 &\hspace{31mm}{}
 \times(-6+2i)^{bn-m_4}(-6-2i)^{bn-m_5}.
 \label{eq:leibniz-laurent}
\end{align}
This formula remains valid for every negative $j$: only $m_0$ is then
allowed to exceed the numerator exponents.

To read off its denominators, use
\[
 4\mp2i=2(2\mp i),
 \qquad 6-2i=2(1-i)(2+i),
 \qquad 6+2i=2(1+i)(2-i).
\]
After pairing conjugate factors, the summand indexed by $\mathbf m$ in
\eqref{eq:leibniz-laurent} is exactly
\begin{align}
 (-1)^{m_1+\cdots+m_5}T(\mathbf m)&
 2^{(5b-2c)n-1+j+m_1}
 5^{2(a+b-c)n+j}
 (1-i)^{-m_4}(1+i)^{-m_5}\notag\\
 &{}\times(2+i)^{m_2+m_5}(2-i)^{m_3+m_4}.
 \label{eq:factored-laurent-summand}
\end{align}
Indeed, the relation in \eqref{eq:multi-index-set} accounts for both
displayed rational powers.

At the Gaussian prime $1+i$, each summand in
\eqref{eq:factored-laurent-summand} has valuation at least
\begin{equation}\label{eq:gaussian-two-bound}
 (10b-4c)n+2j+2m_1-m_4-m_5-2.
\end{equation}
If $j\geq0$, then $m_4+m_5\leq cn-j$, so this is at least
\begin{equation}\label{eq:strong-gaussian-two-bound}
 (10b-5c)n+3j-2=2hn+3j-2.
\end{equation}
Because $c_{j,n}\in\mathbb Q$, one has
$\nu_{1+i}(c_{j,n})=2\nu_2(c_{j,n})$.  Taking account of the integrality
of $\nu_2$ in \eqref{eq:strong-gaussian-two-bound} gives the first
inequality in \eqref{eq:positive-laurent-valuations} (with harmless slack
when $j$ is odd).

At each of the conjugate primes $2+i$ and $2-i$ above $5$, the valuation
of every summand is at least $2(a+b-c)n+j$.  Since a rational number has
the same valuation at both primes, the second inequality in
\eqref{eq:positive-laurent-valuations} follows.

Finally multiply \eqref{eq:factored-laurent-summand} by $10^{-j}$.  At
$1+i$ the $2j$ in \eqref{eq:gaussian-two-bound} disappears, and
$m_4+m_5\leq2bn$ gives
\[
 \nu_{1+i}(10^{-j}c_{j,n})
 \geq2(4b-2c)n-2.
\]
At either prime above $5$, multiplication by $10^{-j}$ changes the lower
bound to $2(a+b-c)n$.  This proves
\eqref{eq:scaled-laurent-valuations}.  Formula
\eqref{eq:factored-laurent-summand} shows that no rational primes other
than $2$ and $5$ can occur in a denominator.  The two lower bounds are
nonnegative for our parameters and $n\geq1$, and hence
$10^{-j}c_{j,n}\in\mathbb Z$ throughout \eqref{eq:needed-laurent-range}.
\end{proof}

\subsection{Deletion of the extra primes}

Define
\begin{equation}\label{eq:deleted-prime-set}
 \begin{split}
 \mathcal P_n=\biggl\{p\text{ prime}:{}&
 \max\{5,\sqrt{d_0n}\}<p\leq d_0n,\\
 &\left\{\frac{an}{p}+\frac12\right\}
 +2\left\{\frac{bn}{p}\right\}
 <\left\{\frac{cn}{p}\right\}\biggr\},
 \qquad \Phi_n=\prod_{p\in\mathcal P_n}p ,
\end{split}
\end{equation}
with the empty product understood to be $1$.

\begin{lemma}[Finite-field deletion]\label{lem:finite-field-deletion}
Let $p\in\mathcal P_n$.  If $p\mid j$, then
\begin{equation}\label{eq:deleted-prime-divisibility}
 c_{j,n}\equiv0\pmod p
\end{equation}
for every $j$ in \eqref{eq:needed-laurent-range}.  Equivalently,
$p\mid10^{-j}c_{j,n}$ in $\mathbb Z$.  In particular,
$\Phi_n\mid c_{0,n}$, and the divisibility for any nonzero $j$ holds
simultaneously for all deleted primes that divide $j$.
\end{lemma}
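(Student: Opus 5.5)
The plan is to work modulo $p$ with the Laurent coefficients read as coefficients of a power series over $\mathbb F_p$, and to show that the relevant series is a polynomial in $(t+5)^p$ times a small factor. Since $p>5$, the prime $p$ is coprime to $10$. By Lemma~\ref{lem:laurent-valuations} every $10^{-j}c_{j,n}$ is an integer, so all $c_{j,n}$ are $p$-integral and can be reduced mod $p$. Put $s=t+5$. Then $(t+5)^{cn+1}R_n(t)=5\,G(s)$ with
\[
 G(s)=\frac{(s-5)^{2an}\bigl(Q(s)\bigr)^{bn}}{(10-s)^{cn+1}},
\]
where $Q(s)=(s-1+2i)(s-1-2i)(s-9+2i)(s-9-2i)$ is the shifted quartic. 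Each $c_{j,n}$ is $5$ times the coefficient of $s^{cn-j}$ in the expansion of $G$ at $s=0$. This holds for all $j$ in \eqref{eq:needed-laurent-range}, including negative $j$, as the proof of Lemma~\ref{lem:laurent-valuations} already noted.

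Next I reduce $G$ modulo $p$ inside $\mathbb Z_{(p)}[i][[s]]$, or better, work over $\mathbb Z_{(p)}$ directly, since $Q\in\mathbb Z[s]$. Write $2an=\lambda_1 p+r_1$, $bn=\lambda_2p+r_2$, $cn+1=\lambda_3p+r_3$ with $0\le r_k<p$. Using $f(s)^p\equiv f(s^p)$ mod $p$, we get
\[
 G(s)\equiv \frac{(s^p-5^p)^{\lambda_1}Q(s^p)^{\lambda_2}}{(10^p-s^p)^{\lambda_3}}\cdot\frac{(s-5)^{r_1}Q(s)^{r_2}}{(10-s)^{r_3}}\pmod p .
\]
The first factor is a power series in $s^p$. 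The claim that the coefficient of $s^{cn-j}$ vanishes whenever $p\mid j$ then becomes a claim about the second factor $H(s)$. Specifically, $s^{cn}\cdot(\text{coefficient extraction})$ should only see residues of exponent $\not\equiv cn \pmod p$.

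The paper hints that an ``exact derivative'' is the right tool. I would try to show that $H(s)\equiv s^{\,?}\,\frac{d}{ds}K(s)$ for a rational $K$. Equivalently, I want to show that $s^{-cn}H(s)$, reduced mod $p$ with $cn\equiv r_3-1$, is a derivative. If a series is $F'(s)$, then the coefficient of $s^{p-1+kp}$ vanishes mod $p$, since it equals $(kp+p)\cdot(\ldots)$. Combined with the $s^p$-periodic factor, this kills exactly the exponents $\equiv -1$, i.e.\ the coefficients of $s^{cn-j}$ with $j\equiv cn+1$… The exponent bookkeeping needs adjusting. The natural form is $s^{-cn-1}G(s)\,ds$, i.e.\ $R_n\,dt$, having zero residue-type coefficients at indices divisible by $p$. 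So I would aim to prove that $R_n(t)\,dt$ is congruent mod $p$ to a $p$-th-power factor times the exact differential $d\bigl(\text{small rational function}\bigr)$ with a derivative form. Then the coefficient of $(t+5)^{-j-1}$ for $p\mid j$ is $-j\cdot(\ldots)\equiv 0$.

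The key computation is therefore this. The fractional-part condition in \eqref{eq:deleted-prime-set} says the residues satisfy $r_1+2\cdot$(residue of $bn$) $<$ residue of $cn$ in the appropriate sense. The $+\tfrac12$ shift accounts for $2an$ versus $an$ under the even symmetry $t\mapsto -t$, and $p>\sqrt{d_0n}$ ensures $\lambda$'s are small and exponents behave. Under this condition the reduced small factor $t^{r_1}(t^4+6t^2+25)^{r_2}/(25-t^2)^{r_3}$ has numerator degree strictly less than the pole order minus one at each pole. I would try to exhibit it as a derivative by partial fractions over $\mathbb F_p$: all its polar terms $(t\pm5)^{-k}$ have $1\le k\le r_3<p$ except possibly $k\equiv 1$ after combining with the $p$-th-power factor. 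The degree inequality should force the $(t\pm5)^{-1}$ coefficients to cancel by evenness and the residue theorem, since the sum of residues is $0$ when the degree is $\le -2$. Every term $(t\pm5)^{-k}$ with $2\le k\le p$ is a derivative mod $p$.

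I expect the main obstacle to be this residue-vanishing step: translating the fractional-part inequality precisely into the degree condition, with the half-shift and the parity of $cn$. The final statements then follow. The case $j=0$ gives $p\mid c_{0,n}$ for every $p\in\mathcal P_n$, and distinct primes give $\Phi_n\mid c_{0,n}$. For general $j$, divisibility by all deleted primes dividing $j$ is simultaneous because $c_{j,n}$ is one integer, up to powers of $10$.
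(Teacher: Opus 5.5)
Your architecture is the paper's: Frobenius splits $R_n$ modulo $p$ into a $p$-th power times a small factor, the small factor is shown to be an exact derivative in $\mathbb F_p(t)$, and then the coefficient of $(t+5)^{-j-1}$ in $(U^pV)'$ is $-j$ times a coefficient of $U^pV$, hence $0$ when $p\mid j$. The gap is in the step that carries the lemma, namely that the small factor is a derivative. You do not prove it, and the mechanism you propose fails.

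For an even rational function $f$ whose only poles are $\pm5$, evenness already gives $\operatorname{Res}_{-5}f=-\operatorname{Res}_{5}f$. The residue theorem (when $\deg f\le-2$) gives exactly the same relation, so nothing forces the residues to vanish. For example, $f=1/(25-t^2)$ is even, of degree $-2$, and has residues $\mp1/10\neq0$ in $\mathbb F_p$ at $\pm5$. Write $an=q_ap+A$, $bn=q_bp+B$, $cn=q_cp+C$. Your splitting by the residue $r_1$ of $2an$ gives $r_1=2A$ precisely when $A\le(p-1)/2$. If also $C\le p-2$, your small factor is the even function $t^{2A}(t^4+6t^2+25)^B/(25-t^2)^{C+1}$, so that whole case is left open. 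When $A\ge(p+1)/2$ one has $r_1=2A-p$, the small factor is odd, and the defining inequality gives degree $\le-3$. There oddness plus the residue theorem does kill the residues, so your idea works in that half; the subcase $C=p-1$ needs only a check of the resulting polynomial.

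What is missing is the translation of the fractional-part condition that you flag as the main obstacle, and ``degree $\le-2$'' is far too weak a consequence of it. The paper avoids poles in the small factor. Using $(25-t^2)^{-(cn+1)}=(25-t^2)^{-(q_c+1)p}(25-t^2)^{p-1-C}$, it makes $H=t^{2A}(t^4+6t^2+25)^B(25-t^2)^{p-1-C}$ an even polynomial. Then $H=V'$ for some $V\in\mathbb F_p[t]$ iff $H$ has no monomial $t^r$ with $r\equiv-1\pmod p$, i.e.\ no $y^{(p-1)/2+\ell p}$ with $y=t^2$. The $y$-support of $H$ lies in $[A,\,A+2B+p-1-C]$, and the defining inequality of $\mathcal P_n$ reads as follows:
if $A\le\frac{p-1}2$, then $C\ge A+2B+\frac{p+1}2$, so the support ends before $\frac{p-1}2$;
if $A\ge\frac{p+1}2$, then $C\ge A+2B-\frac{p-1}2$, so the support lies strictly between $\frac{p-1}2$ and $\frac{3p-1}2$.

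The same inequality repairs your even case directly. It gives $\deg f\le-p-3$, so $(25-t^2)^pf=H$ has degree at most $p-3$. Since $(5+t)^{-p}=(10+(t-5)^p)^{-1}$ is constant to order $p-1$ at $t=5$, one gets $\operatorname{Res}_5f=-\frac1{10}[(t-5)^{p-1}]H=0$. Finally, the hypothesis $p>\sqrt{d_0n}$ plays no role in this lemma; it is used only in Lemma~\ref{lem:reduced-lcm}, to get $\nu_p(L_n^{(0)})=1$.
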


\begin{proof}
All the Laurent coefficients are $p$-integral by
\eqref{eq:factored-laurent-summand}, because $p>5$.  It is therefore
legitimate to reduce their local expansion modulo $p$.

Choose representatives $A,B,C\in\{0,\ldots,p-1\}$ and nonnegative
integers $q_a,q_b,q_c$ so that
\[
 an=q_ap+A,\qquad bn=q_bp+B,\qquad cn=q_cp+C.
\]
In $\mathbb F_p(t)$ put
\begin{equation}\label{eq:finite-field-U-H}
 U(t)=\frac{5t^{2q_a}(t^4+6t^2+25)^{q_b}}
 {(25-t^2)^{q_c+1}},
 \qquad
 H(t)=t^{2A}(t^4+6t^2+25)^B(25-t^2)^{p-1-C}.
\end{equation}
Frobenius and $5^p=5$ give the exact factorization
\begin{equation}\label{eq:frobenius-factorization}
 R_n(t)=U(t)^pH(t).
\end{equation}

Write $y=t^2$.  As a polynomial in $y$, the support of $H$ is contained
in the integer interval
\begin{equation}\label{eq:H-support}
 [A,\ A+2B+p-1-C].
\end{equation}
The obstruction to integrating a monomial $t^r$ in characteristic $p$
is $r\equiv-1\pmod p$.  For an even polynomial these obstructed
monomials are precisely
\begin{equation}\label{eq:even-integration-obstructions}
 y^{(p-1)/2+\ell p}\qquad(\ell\geq0).
\end{equation}

It remains to check that \eqref{eq:H-support} misses all the exponents in
\eqref{eq:even-integration-obstructions}.  Since $p$ is odd, exactly one
of the following cases occurs.  If $A\leq(p-1)/2$, then
\[
 \left\{\frac{an}{p}+\frac12\right\}=\frac Ap+\frac12.
\]
The strict inequality defining $\mathcal P_n$ implies
\[
 C\geq A+2B+\frac{p+1}{2},
 \qquad
 A+2B+p-1-C\leq\frac{p-3}{2}.
\]
Thus the support ends before the first obstruction.  If instead
$A\geq(p+1)/2$, then
\[
 \left\{\frac{an}{p}+\frac12\right\}=\frac Ap-\frac12,
\]
and the same strict inequality gives
\[
 C\geq A+2B-\frac{p-1}{2},
 \qquad
 A+2B+p-1-C\leq\frac{3p-3}{2}.
\]
Now the support starts strictly after $(p-1)/2$ and ends strictly before
$(3p-1)/2$.  This also avoids every obstruction.

Consequently the coefficient of $t^r$ in $H$ is zero whenever
$p\mid r+1$.  Termwise division by $r+1$ for all remaining monomials
constructs a polynomial $V\in\mathbb F_p[t]$ with $V'=H$.  Since the
derivative of a $p$th power is zero in characteristic $p$,
\begin{equation}\label{eq:finite-field-exact-derivative}
 R_n=U^pV'=(U^pV)'
 \qquad\text{in }\mathbb F_p(t).
\end{equation}
Let $z=t+5$.  In the local Laurent expansion of a rational function
$W=\sum_r w_rz^r$, the coefficient of $z^{-j-1}$ in $W'$ is
$-j w_{-j}$.  It vanishes in $\mathbb F_p$ whenever $p\mid j$.
Applying this to $W=U^pV$ and using
\eqref{eq:finite-field-exact-derivative} proves
\eqref{eq:deleted-prime-divisibility}.  For $j=0$ every prime in
$\mathcal P_n$ applies; distinctness of the primes gives
$\Phi_n\mid c_{0,n}$.
\end{proof}

\begin{lemma}[The reduced least common multiple]\label{lem:reduced-lcm}
Let
\begin{equation}\label{eq:two-lcms}
 L_n^{(0)}=\lcm(1,2,\ldots,d_0n)=\lcm(1,2,\ldots,7430n),
 \qquad L_n=\frac{L_n^{(0)}}{\Phi_n}.
\end{equation}
Then $L_n\in\mathbb Z$ and
\begin{equation}\label{eq:reduced-lcm-divisibility}
 L_n\frac{10^{-j}c_{j,n}}{j}\in\mathbb Z
 \quad(-d_0n+1\leq j\leq cn,\ j\ne0),
 \qquad \frac{c_{0,n}}{\Phi_n}\in\mathbb Z.
\end{equation}
\end{lemma}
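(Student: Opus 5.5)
The proof reduces both claims to a prime-by-prime comparison of valuations, using Lemma~\ref{lem:laurent-valuations} and Lemma~\ref{lem:finite-field-deletion}.

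\emph{Integrality of $L_n$.} The primes of $\mathcal P_n$ are distinct and satisfy $p\le d_0n$, so each divides $L_n^{(0)}=\lcm(1,\ldots,d_0n)$. Their product $\Phi_n$ is squarefree, hence $\Phi_n\mid L_n^{(0)}$ and $L_n\in\mathbb Z$. Moreover, for $p\in\mathcal P_n$ we have $p>\sqrt{d_0n}$, so $\nu_p(L_n^{(0)})=1$ and therefore $\nu_p(L_n)=0$. For every prime $p\notin\mathcal P_n$ the valuation is unchanged: $\nu_p(L_n)=\nu_p(L_n^{(0)})=\lfloor\log_p(d_0n)\rfloor$.

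\emph{The case $j=0$.} The statement $c_{0,n}/\Phi_n\in\mathbb Z$ needs two facts. First, $c_{0,n}=10^{0}c_{0,n}\in\mathbb Z$ by \eqref{eq:scaled-laurent-integrality}. Second, $\Phi_n\mid c_{0,n}$ by Lemma~\ref{lem:finite-field-deletion}.

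\emph{The case $j\ne0$.} Fix $j$ with $-d_0n+1\le j\le cn$ and put $x_j=10^{-j}c_{j,n}\in\mathbb Z$, again by \eqref{eq:scaled-laurent-integrality}. Since $1\le|j|\le\max(cn,d_0n-1)<d_0n$, we have $|j|\mid L_n^{(0)}$, and it suffices to show $\nu_p(L_nx_j)\ge\nu_p(j)$ for every prime $p$. There are three cases.
\begin{enumerate}[label=(\roman*)]
\item \emph{$p\notin\mathcal P_n$.} Here $\nu_p(L_n)=\nu_p(L_n^{(0)})\ge\nu_p(j)$, because $|j|\le d_0n$.
\item \emph{$p\in\mathcal P_n$ and $p\nmid j$.} Then $\nu_p(j)=0$ and there is nothing to prove.
\item \emph{$p\in\mathcal P_n$ and $p\mid j$.} Since $p^2>d_0n>|j|$, we get $\nu_p(j)=1$. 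Lemma~\ref{lem:finite-field-deletion} gives $p\mid x_j$, so $\nu_p(x_j)\ge1=\nu_p(j)$, while $\nu_p(L_n)=0$.
\end{enumerate}
The primes $2$ and $5$ are never in $\mathcal P_n$, since $p>5$ there. They fall under case (i), and the nonnegative valuation bounds \eqref{eq:scaled-laurent-integrality} only add slack.

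The main point needing care is the range of $j$: one must check that $|j|<d_0n$ throughout \eqref{eq:needed-laurent-range}, so that $j$ divides $L_n^{(0)}$ and $p^2\nmid j$ for deleted primes. This holds because $cn<d_0n$ ($5570<7430$) and $-j\le d_0n-1$. It also uses the lower cutoff $p>\sqrt{d_0n}$ built into $\mathcal P_n$.
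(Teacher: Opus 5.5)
Your proposal is correct and follows essentially the same route as the paper: you get $\Phi_n\mid L_n^{(0)}$ from $\sqrt{d_0n}<p\le d_0n$, then compare $p$-adic valuations prime by prime. At a deleted prime dividing $j$ you use $p^2>d_0n>|j|$ together with Lemma~\ref{lem:finite-field-deletion}, you use $L_n^{(0)}/j\in\mathbb Z$ at all other primes, and for $j=0$ you combine the integrality of $c_{0,n}$ with $\Phi_n\mid c_{0,n}$.
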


\begin{proof}
For $p\in\mathcal P_n$, the inequalities
$\sqrt{d_0n}<p\leq d_0n$ show that $p$ occurs exactly once in
$L_n^{(0)}$.  Hence $\Phi_n\mid L_n^{(0)}$.

Fix a nonzero $j$ in the stated range and put
$q_j=10^{-j}c_{j,n}\in\mathbb Z$.  If a prime divisor of $\Phi_n$ does
not divide $j$, its deletion from $L_n^{(0)}$ is irrelevant to the
denominator $j$.  If $p\in\mathcal P_n$ divides $j$, then $p^2>d_0n
\geq|j|$, so $\nu_p(j)=1$, while Lemma~\ref{lem:finite-field-deletion}
gives $p\mid q_j$.  At all primes not deleted, the elementary inclusion
$L_n^{(0)}/j\in\mathbb Z$ supplies the required valuation.  This proves
the first assertion in \eqref{eq:reduced-lcm-divisibility}; the second is
the $j=0$ assertion of Lemma~\ref{lem:finite-field-deletion}, together
with Lemma~\ref{lem:laurent-valuations}.
\end{proof}

\subsection{The polynomial part}

Set $t_0=-1-2i$.  There are Gaussian integers $B_k$ such that
\begin{equation}\label{eq:endpoint-polynomial-expansion}
 P_n(t)=\sum_{k=0}^{d_0n-2}B_k(t-t_0)^k,
 \qquad B_k\in\mathbb Z[i],
\end{equation}
because $P_n\in\mathbb Z[t]$ and $t_0\in\mathbb Z[i]$.

\begin{lemma}[Endpoint coefficients]\label{lem:endpoint-coefficients}
For $0\leq k\leq d_0n-2$,
\begin{equation}\label{eq:endpoint-coefficient-bound}
 2^{-hn+\lceil3k/2\rceil+3}B_k\in\mathbb Z[i].
\end{equation}
\end{lemma}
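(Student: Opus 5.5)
The plan is to compute $B_k$ as a Taylor coefficient at $t_0$ and to bound it $(1+i)$-adically. Since $P_n$ is a polynomial, $B_k=\frac1{k!}P_n^{(k)}(t_0)$. By \eqref{eq:even-partial-fractions},
\[
 B_k=\mathcal T_k(R_n)-\sum_{j=0}^{cn}c_{j,n}\Bigl(\mathcal T_k\bigl((5+t)^{-j-1}\bigr)+\mathcal T_k\bigl((5-t)^{-j-1}\bigr)\Bigr),
\]
where $\mathcal T_k$ denotes the $k$th Taylor coefficient at $t_0$. I will show that every term has $\nu_{1+i}\geq 2hn-3k-6$. This is equivalent to \eqref{eq:endpoint-coefficient-bound}, since $\nu_{1+i}(2)=2$ and $B_k\in\mathbb Z[i]$ already controls all odd primes. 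Only the prime $1+i$ needs to be checked.

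\emph{Principal parts.} Write $5+t=(4-2i)+(t-t_0)$ and $5-t=(6+2i)-(t-t_0)$. Then the $k$th coefficients are $\binom{j+k}{k}(-1)^k(4-2i)^{-j-1-k}$ and $\binom{j+k}{k}(6+2i)^{-j-1-k}$, respectively. Here $\nu_{1+i}(4-2i)=2$ and $\nu_{1+i}(6+2i)=3$, because $6+2i=2(1+i)(2-i)$.

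Combine this with $\nu_{1+i}(c_{j,n})=2\nu_2(c_{j,n})\geq 2hn+2\lfloor 3j/2\rfloor-2$ from Lemma~\ref{lem:laurent-valuations}. The worse of the two contributions is then at least
\[
 2hn+3j-3-3(j+1+k)=2hn-3k-6.
\]
The constant $3$ in the exponent comes exactly from this step.

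\emph{The Taylor coefficients of $R_n$ itself.} I would use the Leibniz formula at $t_0$, as in \eqref{eq:leibniz-laurent}. Factor $t^4+6t^2+25$ as $(t-t_0)(t-\rho_1)(t-\rho_2)(t-\rho_3)$ with $\rho_i\in\{-1+2i,\,1+2i,\,1-2i\}$. The differences $t_0-\rho_i$ are $-4i$, $-2-4i$ and $-2$, of $(1+i)$-valuations $4$, $2$ and $2$. The factor $t^{2an}$ has $t_0$ a unit, so its derivatives cost nothing. The factor $(t-t_0)^{bn}$ contributes only if it absorbs exactly $bn$ derivatives, at valuation cost $0$; in particular $B_k$ receives nothing from $R_n$ when $k<bn$.

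For the remaining factors, with $m$ derivatives placed on each:
\begin{itemize}[label={}]
\item $(t-\rho_i)^{bn-m}$ contributes $\nu_{1+i}(t_0-\rho_i)\cdot(bn-m)$;
\item $(5+t)^{-cn-1-m}$ contributes $-2(cn+1+m)$;
\item $(5-t)^{-cn-1-m}$ contributes $-3(cn+1+m)$.
\end{itemize}
The binomial factors are integers. The baseline, with no further derivatives, is therefore $8bn-5cn-5$.

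Each of the $k-bn$ remaining derivatives costs at most $4$, and the cost $4$ is available only on the factor with root difference $-4i$, which accepts at most $bn$ derivatives. All other derivatives cost at most $3$. The worst distribution therefore gives
\[
 \nu_{1+i}\geq 8bn-5cn-5-4bn-3(k-2bn)=10bn-5cn-5-3k=2hn-3k-5.
\]

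The main obstacle is this last bookkeeping. A naive bound of cost $4$ per derivative fails, because $k$ can reach $d_0n-2$, which slightly exceeds $2bn$. The argument must use the cap of $bn$ derivatives on the $(t-\rho_1)^{bn}$ factor, and it must exploit the vanishing of $t^4+6t^2+25$ at $t_0$ to recover the missing $2bn$. Once every summand in both pieces has $\nu_{1+i}\geq 2(hn-\lceil 3k/2\rceil-3)$, the bound \eqref{eq:endpoint-coefficient-bound} follows.
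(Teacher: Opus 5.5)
Your proof is correct. For $k<bn$ it coincides with the paper's argument: $\mathcal T_k(R_n)=0$, and you bound the principal-part Taylor coefficients at $t_0$ using $\nu_{1+i}(4-2i)=2$ and $\nu_{1+i}(6+2i)=3$. The paper uses the sharper estimate $\nu_{1+i}(c_{j,n})\ge 2hn+3j-2$ from inside the proof of Lemma~\ref{lem:laurent-valuations} and obtains $2hn-3k-5$. Your use of the stated $\nu_2$ bound loses one unit, and the $+3$ in the exponent absorbs it.

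The genuine difference is the range $k\ge bn$, where the paper does no valuation work. Since $h<3b/2$ (equivalently $7b<5c$), the exponent $-hn+\lceil 3k/2\rceil+3$ is already nonnegative there, so $B_k\in\mathbb Z[i]$ is enough. Your Leibniz count is valid: exactly $bn$ derivatives land on $(t-t_0)^{bn}$, at most $bn$ land on the cost-$4$ factor, every other derivative costs at most $3$, and so the total cost is at most $3k-2bn$. At the actual parameters, however, this count is redundant. For $k\ge bn$ your lower bound satisfies $2hn-3k-6\le(7b-5c)n-6<0$, so it gives nothing beyond $B_k\in\mathbb Z[i]$. The ``main obstacle'' you identify therefore disappears.

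What your route does buy is independence from the chamber inequality $7\beta<5$. It proves $\nu_{1+i}(B_k)\ge 2hn-3k-6$ for every $k$ with no relation assumed between $b$ and $c$. By contrast, the proof of Proposition~\ref{prop:rational-realization} says this lemma is precisely where $7b<5c$ is used.
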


\begin{proof}
If $k<bn$, then $R_n$ has a zero of order $bn$ at $t_0$.  Differentiating
\eqref{eq:even-partial-fractions} $k$ times in normalized form therefore
gives
\begin{equation}\label{eq:endpoint-Bk-formula}
 B_k=-\sum_{j=0}^{cn}\binom{j+k}{k}c_{j,n}
 \left(\frac{(-1)^k}{(4-2i)^{j+k+1}}
       +\frac1{(6+2i)^{j+k+1}}\right).
\end{equation}
The proof of Lemma~\ref{lem:laurent-valuations} gave the stronger
Gaussian estimate
\[
 \nu_{1+i}(c_{j,n})\geq2hn+3j-2.
\]
Since $\nu_{1+i}(4-2i)=2$ and
$\nu_{1+i}(6+2i)=3$, the two summands inside
\eqref{eq:endpoint-Bk-formula} have valuation at least, respectively,
\[
 2hn+j-2k-4
 \quad\text{and}\quad
 2hn-3k-5.
\]
Both are at least $2hn-3k-5$, so
$\nu_{1+i}(B_k)\geq2hn-3k-5$.  This implies
\eqref{eq:endpoint-coefficient-bound}.

If $k\geq bn$, no divisibility beyond $B_k\in\mathbb Z[i]$ is needed:
the numerical inequality
\[
 h=4645<5571=\frac{3b}{2}
\]
shows that $-hn+\lceil3k/2\rceil+3\geq0$.  Thus
\eqref{eq:endpoint-coefficient-bound} holds in this range as well.
\end{proof}

\begin{lemma}[Integral of the polynomial part]\label{lem:polynomial-integral}
One has
\begin{equation}\label{eq:polynomial-integrality}
 2^{1-hn}L_n i\int_{-1-2i}^{-1+2i}P_n(t)\,dt\in\mathbb Z.
\end{equation}
\end{lemma}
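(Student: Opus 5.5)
We will integrate the endpoint expansion \eqref{eq:endpoint-polynomial-expansion} term by term.  Writing $t_1=-1+2i$, so that $t_1-t_0=4i$, we get
\[
 i\int_{t_0}^{t_1}P_n(t)\,dt
 =i\sum_{k=0}^{d_0n-2}B_k\frac{(4i)^{k+1}}{k+1}.
\]
The plan is to show that each summand, multiplied by $2^{1-hn}L_n$, lies in $\mathbb Z[i]$.  Since the whole integral is rational, the sum then lies in $\mathbb Q\cap\mathbb Z[i]=\mathbb Z$.  Rationality holds because $P_n\in\mathbb Z[t^2]$ has rational coefficients and the substitution $t=-1+is$ maps the conjugate-symmetric segment to itself; alternatively, $J_n$ minus the partial-fraction pieces is real up to the factor $i$.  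We will verify this by noting that $\overline{i\int_{t_0}^{t_1}P_n}=-i\int_{t_1}^{t_0}P_n=i\int_{t_0}^{t_1}P_n$.

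For each $k$ we will check the valuation at every prime separately.  At odd primes $p$, we have $k+1\le d_0n-1$, so $\nu_p(k+1)\le\nu_p(L_n^{(0)})$.  The deleted primes $p\in\mathcal P_n$ need care, because $L_n$ has lost one factor of $p$.  If $p\mid k+1$, then $\nu_p(k+1)=1$, since $p^2>d_0n$, and we would need $p\mid B_k$.  Our first attempt is to derive this from Lemma~\ref{lem:finite-field-deletion}.  Modulo $p$, $R_n=(U^pV)'$ is an exact derivative.  The principal parts at $\pm5$ of $(U^pV)'$ are derivatives of rational functions, and so is the $j=0$ logarithmic part, since $c_{0,n}\equiv0$.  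Hence $P_n$ is an exact derivative modulo $p$, namely $P_n\equiv(\text{polynomial part of }U^pV)'$.  Re-expanding this derivative at $t_0$, the coefficient of $(t-t_0)^k$ with $p\mid k+1$ is $(k+1)\cdot(\ldots)\equiv0$.  This gives $p\mid B_k$ in $\mathbb Z[i]/(p)$, which is what is needed.  This step, showing that the polynomial part inherits the exact-derivative structure, is the main obstacle.  The negative Laurent indices of Lemma~\ref{lem:laurent-valuations} (the range $-d_0n+1\le j<0$) were introduced precisely to handle it: the coefficients $c_{j,n}$ with negative $j$ are the coefficients of the polynomial part at $-5$, and Lemma~\ref{lem:finite-field-deletion} already gives $p\mid c_{j,n}$ when $p\mid j$.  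So an alternative route is to expand $P_n$ at $-5$ instead of at $t_0$, which avoids redoing the Frobenius argument.

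At the prime $2$ we will use Lemma~\ref{lem:endpoint-coefficients}, which gives $\nu_2$-type control $\nu_{1+i}(B_k)\ge 2hn-3k-6$ (in Gaussian terms, $\nu_{1+i}(B_k)\ge2(hn-\lceil3k/2\rceil-3)$).  The factor $(4i)^{k+1}$ contributes $\nu_{1+i}=4(k+1)$, and $2^{1-hn}$ contributes $2-2hn$.  The $1+i$-adic valuation of the $k$th term is therefore at least
\[
 2hn-2\lceil 3k/2\rceil-6+4k+4+2-2hn-2\nu_2(k+1)+2\nu_2(L_n)
 \ge k-1-2\nu_2(k+1)+2\nu_2(L_n^{(0)}),
\]
using $\nu_2(L_n)=\nu_2(L_n^{(0)})$, since $2\notin\mathcal P_n$, and $\nu_2(L_n^{(0)})\ge\nu_2(k+1)$.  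The resulting bound $\ge k-1$ is nonnegative for $k\ge1$.  For $k=0$ we will check the estimate directly: either the slack in Lemma~\ref{lem:endpoint-coefficients} is enough, or $\nu_2(L_n^{(0)})\ge1$ absorbs the deficit.  Combining these checks at the odd primes, the deleted primes, and the prime $2$ will prove \eqref{eq:polynomial-integrality}.
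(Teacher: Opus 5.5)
Your proof is correct, but it handles the deleted primes by a different mechanism from the paper.

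The paper never shows $p\mid B_k$. It proceeds in three steps:
\begin{enumerate}
\item It uses the endpoint expansion only with the full $L_n^{(0)}$, which gives $\Phi_nX\in\mathbb Z$ for the quantity $X$ in \eqref{eq:polynomial-integrality}.
\item It re-expands $P_n$ at $-5$ using the complete Laurent coefficients (negative indices included), the transfer identity \eqref{eq:binomial-denominator-transfer}, and Lemma~\ref{lem:reduced-lcm}, which gives $X\in\mathbb Z[1/10]$.
\item It concludes from $\Phi_n^{-1}\mathbb Z\cap\mathbb Z[1/10]=\mathbb Z$.
\end{enumerate}

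You instead push the exact-derivative structure down to $P_n$ itself, and this works. For $p\in\mathcal P_n$ the $c_{j,n}$ are $p$-integral and $\pm5$ stay distinct modulo $p$. So reducing \eqref{eq:even-partial-fractions} gives the partial-fraction decomposition of $\bar R_n=(U^pV)'$ in $\mathbb F_p(t)$. Derivatives of principal parts are again principal parts. Uniqueness of the decomposition therefore forces $\bar P_n=W'$, where $W$ is the polynomial part of $U^pV$. Taylor expansion at $\bar t_0$ in $\mathbb Z[i]/(p)$ then gives $\bar B_k=(k+1)w_{k+1}=0$ whenever $p\mid k+1$. You should state the uniqueness step explicitly. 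Once you do, your remark about the $j=0$ term becomes automatic, since $W'$ has no residues.

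What each route buys:
\begin{itemize}
\item Your route proves integrality term by term, which is stronger. It also makes the negative Laurent indices and the second representation unnecessary for this lemma. The cost is that it reopens the proof of Lemma~\ref{lem:finite-field-deletion}: you use the internal identity \eqref{eq:finite-field-exact-derivative}, not the lemma's stated conclusion.
\item The paper's route uses only the statements of earlier lemmas. It also keeps the $2$-adic bookkeeping and the deleted-prime bookkeeping in separate representations.
\end{itemize}

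Two small slips.
\begin{itemize}
\item The bound $\nu_{1+i}(B_k)\ge 2hn-3k-6$ is off by one for odd $k$. Your parenthetical form $2hn-2\lceil 3k/2\rceil-6$ is the correct one. With it, and with $\nu_2(L_n)=\nu_2(L_n^{(0)})\ge\nu_2(k+1)$, the $k$th term has $1+i$-adic valuation at least $4k-2\lceil 3k/2\rceil\ge0$ for every $k$. So $k=0$ needs no separate check.
\item In your alternative route, the $c_{-k,n}$ are Taylor coefficients at $-5$ of $R_n$, not of $P_n$. The $+5$ principal part must be subtracted as in \eqref{eq:second-polynomial-representation}. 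That expansion alone also only yields integrality in $\mathbb Z[1/10]$, which is why the paper pairs it with the endpoint expansion.
\end{itemize}
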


\begin{proof}
We use two representations of the polynomial part.  First,
\eqref{eq:endpoint-polynomial-expansion} gives
\begin{equation}\label{eq:first-polynomial-integral}
 i\int_{-1-2i}^{-1+2i}P_n(t)\,dt
 =i\sum_{k=0}^{d_0n-2}\frac{B_k(4i)^{k+1}}{k+1}.
\end{equation}
Here $L_n^{(0)}/(k+1)\in\mathbb Z$.  After multiplication by
$2^{1-hn}L_n^{(0)}$, the power of $2$ remaining beyond
\eqref{eq:endpoint-coefficient-bound} is
\[
 2k-\left\lceil\frac{3k}{2}\right\rceil\geq0.
\]
Thus the result belongs to $\mathbb Z[i]$.  The integral in
\eqref{eq:first-polynomial-integral} is rational: an antiderivative of
$P_n\in\mathbb Z[t]$ takes conjugate values at the two endpoints, and
multiplication by $i$ makes their difference real.  Therefore
\begin{equation}\label{eq:polynomial-first-inclusion}
 2^{1-hn}L_n^{(0)}i\int_{-1-2i}^{-1+2i}P_n(t)\,dt\in\mathbb Z.
\end{equation}

For the second representation, put $z=t+5$.  The complete Laurent
expansion of $R_n$ at $-5$ is
\[
 R_n(t)=\sum_{j=-\infty}^{cn}c_{j,n}z^{-j-1},
\]
whereas
\[
 \frac{c_{j,n}}{(5-t)^{j+1}}
 =\frac{c_{j,n}}{(10-z)^{j+1}}
 =\sum_{k\geq1}\binom{j+k-1}{j}
   \frac{c_{j,n}}{10^{j+k}}z^{k-1}.
\]
Comparison with \eqref{eq:even-partial-fractions}, using
$\deg P_n=d_0n-2$, yields the finite identity
\begin{equation}\label{eq:second-polynomial-representation}
 P_n(t)=\sum_{k=1}^{d_0n-1}\left(
 c_{-k,n}-\sum_{j=0}^{cn}\binom{j+k-1}{j}
 \frac{c_{j,n}}{10^{j+k}}\right)z^{k-1}.
\end{equation}

We claim that after termwise integration, multiplication by $L_n$ puts
every summand in $\mathbb Z[i,1/10]$.  For the first term in parentheses,
Lemma~\ref{lem:reduced-lcm} at the negative Laurent index $-k$ gives
\[
 L_n\frac{c_{-k,n}}k\in10^{-k}\mathbb Z.
\]
For $j\geq1$, use
\begin{equation}\label{eq:binomial-denominator-transfer}
 \frac1k\binom{j+k-1}{j}
 =\frac1j\binom{j+k-1}{j-1}
\end{equation}
and Lemma~\ref{lem:reduced-lcm}; the corresponding coefficient again
lies in $10^{-k}\mathbb Z$.  For $j=0$,
\[
 L_n\frac{c_{0,n}}{k10^k}
 =\frac{L_n^{(0)}}k\frac{c_{0,n}}{\Phi_n}\,10^{-k}
 \in10^{-k}\mathbb Z.
\]
The endpoint difference $(4+2i)^k-(4-2i)^k$ is a Gaussian integer.
Consequently the second representation proves
\begin{equation}\label{eq:polynomial-second-inclusion}
 L_n\,i\int_{-1-2i}^{-1+2i}P_n(t)\,dt
 \in\mathbb Z[1/10].
\end{equation}

Let $X$ denote the left side of \eqref{eq:polynomial-integrality}.
Equation \eqref{eq:polynomial-first-inclusion} says $\Phi_nX\in\mathbb Z$,
while \eqref{eq:polynomial-second-inclusion} says
$X\in\mathbb Z[1/10]$.  Since $(\Phi_n,10)=1$,
\begin{equation}\label{eq:intersection-argument}
 \Phi_n^{-1}\mathbb Z\cap\mathbb Z[1/10]=\mathbb Z.
\end{equation}
Hence $X\in\mathbb Z$, proving the lemma.  This intersection is why the
negative Laurent coefficients and both representations of the polynomial
part cannot be omitted.
\end{proof}

\subsection{The pole terms and the integer linear form}

\begin{lemma}[Nonlogarithmic pole terms]\label{lem:nonlogarithmic-poles}
The part of \eqref{eq:even-partial-fractions} with $1\leq j\leq cn$
satisfies
\begin{equation}\label{eq:nonlogarithmic-integrality}
 2^{1-hn}L_n i\int_{-1-2i}^{-1+2i}
 \sum_{j=1}^{cn}c_{j,n}
 \left(\frac1{(5+t)^{j+1}}+\frac1{(5-t)^{j+1}}\right)dt
 \in\mathbb Z.
\end{equation}
\end{lemma}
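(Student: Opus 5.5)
We integrate each pole term explicitly and then handle the two different kinds of denominators separately, as in Lemma~\ref{lem:polynomial-integral}. The denominators $j$ coming from integration are cleared by $L_n$ through Lemma~\ref{lem:reduced-lcm}. The powers of $2$ and $5$ coming from the endpoints are cleared by the valuation bounds of Lemma~\ref{lem:laurent-valuations}.

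For $j\geq1$ an antiderivative of $(5+t)^{-j-1}+(5-t)^{-j-1}$ is $\frac1j\bigl((5-t)^{-j}-(5+t)^{-j}\bigr)$. At $t=-1\pm2i$ we have $5+t=4\pm2i$ and $5-t=6\mp2i$. The $j$-th term of the integral is therefore
\[
 \frac{c_{j,n}}{j}\Bigl((6-2i)^{-j}-(6+2i)^{-j}-(4+2i)^{-j}+(4-2i)^{-j}\Bigr).
\]
Multiplying by $i$ makes the bracket real, because it is a difference of conjugate pairs. Hence the whole expression is rational, and it suffices to show that each term lies in $\mathbb Z[i]$ after multiplication by $2^{1-hn}L_n$. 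Any element of $\mathbb Z[i]\cap\mathbb Q$ is a rational integer.

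For the denominator $j$ we write $c_{j,n}/j=10^{j}\cdot(10^{-j}c_{j,n}/j)$. Lemma~\ref{lem:reduced-lcm} gives $L_n\,10^{-j}c_{j,n}/j\in\mathbb Z$ for $1\leq j\leq cn$. So $L_n c_{j,n}/j\in10^{j}\mathbb Z$, and all that remains is to check that $2^{1-hn}\,10^{j}$ times each reciprocal power is Gaussian-integral.

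The prime $5$ is clean. The number $4\pm2i=2(2\pm i)$ carries one prime above $5$, and so does $6\mp2i$. Hence $5^{j}(4\pm2i)^{-j}$ and $5^j(6\mp2i)^{-j}$ are integral at the primes above $5$, and these are the only odd primes involved.

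The prime $1+i$ is the main obstacle. We have $\nu_{1+i}(10^j)=2j$, while $\nu_{1+i}((4\pm2i)^{j})=2j$ and $\nu_{1+i}((6\pm2i)^{j})=3j$. The $(6\pm2i)$ terms therefore leave a deficit of $j$ at $1+i$, and the factor $2^{1-hn}$ removes a further $2(hn-1)$. So using only $10^{-j}c_{j,n}\in\mathbb Z$ is too weak. Instead we return to the stronger bound $\nu_{1+i}(c_{j,n})\geq2hn+3j-2$ from the proof of Lemma~\ref{lem:laurent-valuations}. We will not split $c_{j,n}$ into $10^j\cdot q_j$ at the prime $2$; we combine the two facts as follows.
\begin{itemize}
\item Away from $2$, and at $5$, use $L_n q_j/j\in\mathbb Z$.
\item At $1+i$, note that $j\leq cn<d_0n$, so $\nu_2(L_n)\geq\nu_2(j)$, because no deleted prime equals $2$. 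Then $L_nc_{j,n}/j$ has $1+i$-valuation at least $2hn+3j-2$.
\end{itemize}
Subtracting $3j$ for $(6\pm2i)^{-j}$, or $2j$ for $(4\pm2i)^{-j}$, and $2hn-2$ for $2^{1-hn}$ leaves a nonnegative valuation in every case. This completes the integrality check.

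The delicate bookkeeping is in keeping the two decompositions consistent: the $1+i$ analysis uses the raw $c_{j,n}$ together with the valuation of $L_n/j$ at $2$, while the odd-prime analysis uses $q_j$. Because valuations are local, each prime can be treated separately. Summing over $1\leq j\leq cn$ gives the rational integer claimed in \eqref{eq:nonlogarithmic-integrality}.
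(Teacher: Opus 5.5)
Your proof is correct and follows essentially the paper's argument. You integrate termwise, clear the denominator $j$ and the odd-prime denominators with Lemma~\ref{lem:reduced-lcm}, repair the $1+i$ deficit coming from $(6\pm2i)^{-j}$ using $\nu_{1+i}(c_{j,n})\geq 2hn+3j-2$ together with $\nu_2(L_n)\geq\nu_2(j)$, and conclude by rationality. The paper does the same bookkeeping more compactly: it rewrites $10/(4\mp2i)=2\pm i$ and $10/(6\pm2i)=(2\pm i)/(1\pm i)$, then applies the equivalent scaled bound $\nu_{1+i}(10^{-j}c_{j,n})\geq 2hn+j-2$.
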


\begin{proof}
Set
\[
 q_j=L_n\frac{10^{-j}c_{j,n}}j\in\mathbb Z
\]
by Lemma~\ref{lem:reduced-lcm}.  Direct integration gives the expression
inside \eqref{eq:nonlogarithmic-integrality}, before its leading power of
$2$, as
\begin{equation}\label{eq:integrated-pole-expression}
 i\sum_{j=1}^{cn}q_j\left(
 (2+i)^j-(2-i)^j
 -\left(\frac{2+i}{1+i}\right)^j
 +\left(\frac{2-i}{1-i}\right)^j\right).
\end{equation}
Indeed,
\[
 \frac{10}{4-2i}=2+i,
 \quad \frac{10}{4+2i}=2-i,
 \quad \frac{10}{6+2i}=\frac{2+i}{1+i},
 \quad \frac{10}{6-2i}=\frac{2-i}{1-i}.
\]
The proof of Lemma~\ref{lem:laurent-valuations} also gives
\[
 \nu_{1+i}(10^{-j}c_{j,n})\geq2hn+j-2.
\]
Since $\Phi_n$ is odd and $L_n^{(0)}/j$ is $2$-adically integral,
$\nu_{1+i}(L_n/j)\geq0$.  It follows that
\[
 \nu_{1+i}\left(
 2^{1-hn}\frac{q_j}{(1\pm i)^j}\right)\geq
 (2-2hn)+(2hn+j-2)-j=0.
\]
Thus multiplication of \eqref{eq:integrated-pole-expression} by
$2^{1-hn}$ gives a Gaussian integer.  The integrated expression is
rational, by the same conjugation argument as for the polynomial part,
and is therefore an ordinary integer.
\end{proof}

\begin{proposition}[Arithmetic normalization]\label{prop:arithmetic-normalization}
Define
\begin{equation}\label{eq:arithmetic-multiplier}
 M_n=2^{4-4645n}\frac{L_n^{(0)}}{\Phi_n}
 =2^{4-hn}L_n.
\end{equation}
Then there are $U_n,V_n\in\mathbb Z$ such that
\begin{equation}\label{eq:integer-linear-forms}
 \boxed{M_nJ_n=U_n+V_n\pi.}
\end{equation}
More explicitly,
\begin{equation}\label{eq:integer-pi-coefficient}
 V_n=-2^{3-hn}L_n c_{0,n}\in\mathbb Z.
\end{equation}
\end{proposition}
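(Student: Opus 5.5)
The plan is to split $J_n=i\int_{-1-2i}^{-1+2i}R_n(t)\,dt$ along the even partial-fraction decomposition \eqref{eq:even-partial-fractions} into three pieces: the polynomial part $P_n$, the nonlogarithmic pole terms with $1\le j\le cn$, and the logarithmic $j=0$ term. Each piece is multiplied by $M_n=2^{4-hn}L_n=8\cdot 2^{1-hn}L_n$.

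For the first piece, Lemma~\ref{lem:polynomial-integral} shows that $2^{1-hn}L_n\,i\int P_n\,dt\in\mathbb Z$, so the product with $M_n$ is $8$ times an integer. For the second piece, Lemma~\ref{lem:nonlogarithmic-poles} gives the same conclusion. Their sum defines $U_n\in\mathbb Z$, which is real and integral.

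For the $j=0$ term, \eqref{eq:logarithmic-integral} gives
\[
i\,c_{0,n}\int_{-1-2i}^{-1+2i}\Bigl(\frac1{5+t}+\frac1{5-t}\Bigr)dt
= i\,c_{0,n}\cdot\frac{\pi i}{2}=-\frac{c_{0,n}}{2}\pi .
\]
Multiplying by $M_n$ yields $V_n=-2^{3-hn}L_nc_{0,n}$, in agreement with \eqref{eq:integer-pi-coefficient}.

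It remains to show $V_n\in\mathbb Z$, which I expect to be the only real check. By Lemma~\ref{lem:reduced-lcm}, $c_{0,n}/\Phi_n\in\mathbb Z$, so
\[
L_nc_{0,n}=L_n^{(0)}\cdot\frac{c_{0,n}}{\Phi_n}\in\mathbb Z .
\]
This settles every prime except $2$. At $2$, Lemma~\ref{lem:laurent-valuations} with $j=0$ gives $\nu_2(c_{0,n})\ge hn-1$, and $\Phi_n$ is odd, while $L_n^{(0)}$ is an integer. Hence
\[
\nu_2(V_n)\ge 3-hn+(hn-1)=2\ge0 .
\]
At odd primes, $V_n$ is a power of $2$ times the integer $L_n^{(0)}(c_{0,n}/\Phi_n)$, so it is integral there as well. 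Together these give $V_n\in\mathbb Z$.

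Finally, I will confirm that no other denominators enter. The constant $5$ in $R_n$ is already included in the $c_{j,n}$ through \eqref{eq:leibniz-laurent}. The relation $J_n=5I$ plays no role, since the proposition concerns $J_n$ itself. Adding the three pieces then gives $M_nJ_n=U_n+V_n\pi$ with $U_n,V_n\in\mathbb Z$.
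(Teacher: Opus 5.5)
Your proposal is correct and follows essentially the same route as the paper. It uses the same three-way split, with Lemmas~\ref{lem:polynomial-integral} and \ref{lem:nonlogarithmic-poles} covering the polynomial and nonlogarithmic pole parts with a factor $8$ to spare. The $\pi$-coefficient is then cleared by the same combination of $c_{0,n}/\Phi_n\in\mathbb Z$ (Lemma~\ref{lem:reduced-lcm}) with the oddness of $\Phi_n$ and the bound $\nu_2(c_{0,n})\geq hn-1$ (Lemma~\ref{lem:laurent-valuations}).
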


\begin{proof}
Lemmas~\ref{lem:polynomial-integral} and
\ref{lem:nonlogarithmic-poles} show that multiplication by
$2^{1-hn}L_n$ makes the polynomial integral and all nonlogarithmic pole
integrals integers.  The multiplier $M_n$ is eight times this common
multiplier, so those contributions remain integral.

It remains to treat the logarithmic coefficient.  Equations
\eqref{eq:logarithmic-integral} and \eqref{eq:rational-linear-form} show
that its coefficient of $\pi$ is $-c_{0,n}/2$.  By
Lemma~\ref{lem:reduced-lcm}, $c_{0,n}/\Phi_n\in\mathbb Z$; since
$\Phi_n$ is odd, Lemma~\ref{lem:laurent-valuations} also gives
\[
 \nu_2(c_{0,n}/\Phi_n)=\nu_2(c_{0,n})\geq hn-1.
\]
Consequently
\[
 -\frac{M_nc_{0,n}}2
 =-2^{3-hn}L_n^{(0)}\frac{c_{0,n}}{\Phi_n}
 \in\mathbb Z,
\]
which is \eqref{eq:integer-pi-coefficient}.  Combining the three
separately cleared contributions proves \eqref{eq:integer-linear-forms}.
\end{proof}

\section{The removable-prime saving}\label{sec:prime-saving}

We now determine the exponential size of the product \(\Phi_n\) from
\eqref{eq:deleted-prime-set}.  The argument has three parts: an exact
description of the periodic set that selects the primes, a prime-number-
theorem passage, and a numerical enclosure whose error terms are printed
explicitly.

\subsection{Exact decomposition of the periodic set}

Put
\begin{equation}\label{eq:periodic-set-E}
 E=\left\{u\in[0,1):
 \left\{1857u+\frac12\right\}+2\{3714u\}<\{5570u\}
 \right\}.
\end{equation}
For this subsection write
\[
 a=1857,\qquad b=3714=2a,\qquad c=5570=\frac{3b}{2}-1,
 \qquad d=a+2b-c=3715=b+1.
\]

\begin{proposition}[The five interval families]\label{prop:E-decomposition}
The set \(E\) is the following disjoint union of half-open intervals:
\begin{align}
 &\bigcup_{\substack{1\leq j\leq371\\j\text{ odd}}}
 \left[\frac{j}{3714},\frac{3j+1}{2\cdot5570}\right),
 &&\bigcup_{\substack{2\leq j\leq1114\\j\text{ even}}}
 \left[\frac{j}{3714},\frac{3j}{2\cdot5570}\right),
 \label{eq:E-families-first}\displaybreak[0]\\
 &\bigcup_{\substack{373\leq j\leq1855\\j\text{ odd}}}
 \left[\frac{j}{3714},\frac{2j+1}{2\cdot3715}\right),
 &&\bigcup_{\substack{1116\leq j\leq1856\\j\text{ even}}}
 \left[\frac{j}{3714},\frac{2j+1}{2\cdot3715}\right),
 \label{eq:E-families-second}\displaybreak[0]\\
 &\bigcup_{\substack{1859\leq j\leq3713\\j\text{ odd}}}
 \left[\frac{j}{3714},\frac{3j-1}{2\cdot5570}\right).
 \label{eq:E-families-third}
\end{align}
The five families contain, respectively,
\[
 186,\quad557,\quad742,\quad371,\quad928
\]
intervals.  In particular, \(E\) has \(2784\) components, its first left
endpoint is \(1/3714\), and
\begin{equation}\label{eq:E-measure}
 |E|=\frac{1724689}{13797510}.
\end{equation}
\end{proposition}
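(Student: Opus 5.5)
The plan is to reduce the defining inequality of $E$ to piecewise linear bookkeeping on the natural partition of $[0,1)$ determined by $b=3714$. Write $x=\{au+\tfrac12\}$, $y=\{bu\}$, $z=\{cu\}$, so that membership in $E$ means $x+2y<z$.

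First, since $b=2a$, we have $y=\{2au\}$, and therefore
\[
 x=\left\{au+\tfrac12\right\}=\tfrac12\left(y+\epsilon\right),
\]
where $\epsilon\in\{0,1\}$ is determined by the integer and half-integer position of $au$. Concretely, on $u\in[j/b,(j+1)/b)$ we have $bu=j+y$ and $au=(j+y)/2$, so $x=\{(j+y+1)/2\}$. This equals $(y+1)/2$ when $j$ is even and $y/2$ when $j$ is odd. Likewise $cu=(3b/2-1)u=\tfrac32(j+y)-u$, so on each such cell $z$ is an explicit affine function of $y$, up to an integer shift. The parity of $j$ determines whether $3j/2$ is an integer or a half-integer, and that is exactly where the five families and the $\pm1$ numerators come from. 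The condition $x+2y<z$ forces $x+2y<1$, so $y$ must be small: $y<2/5$ when $j$ is odd and $y<1/5$ when $j$ is even. In that regime $z$ is approximately $\{3j/2\}+\tfrac32 y-u$, possibly wrapped by one.

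Second, I treat each cell $[j/b,(j+1)/b)$ separately, splitting by the parity of $j$. Within a cell, $x+2y-z$ is affine in $u$ with positive slope; for example, with $j$ even the slope is $b/2+2b-c$ times a positive factor. Hence $E\cap$ cell is a single half-open interval $[j/b,\text{root})$, or empty. It is half-open on the right because the inequality is strict and the left endpoint satisfies it. I then solve for the root. With $j$ odd, $\{3j/2\}=1/2$, and the equation $y/2+2y=\tfrac12+\tfrac32y-u$ (with $u=(j+y)/b$) gives the right endpoint $(3j+1)/(2c)$. With $j$ even, $\{3j/2\}=0$ wraps, and the root is $3j/(2c)$. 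However, the wrap of $cu$ can occur before that root. This happens once $j$ is large, because the term $-u$ makes $z$ at $u=j/b$ equal to $\{\,\cdot\,-j/b\}$ shifted. For $j\ge373$ (odd) or $j\ge1116$ (even), the binding constraint becomes the other branch, where $z$ reaches $0$ again or $x$ wraps. Solving that branch yields $(2j+1)/(2d)$ with $d=a+2b-c$. For $j\ge1859$ the first branch reappears, now with $u>1/2$, giving $(3j-1)/(2c)$. The thresholds $371/373$, $1114/1116$, $1855/1859$ are obtained by comparing the candidate root with the cell's right end and with the wrap point $\{cu\}=0$. Even $j>1856$ gives empty cells, and $j=0$ gives none because its endpoint is degenerate.

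Third, I count the intervals in each family and sum the lengths as arithmetic progressions, which yields $186+557+742+371+928=2784$ and $|E|=1724689/13797510$. Disjointness is automatic since distinct $j$ give distinct cells.

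The main obstacle is the second step: correctly identifying, for each $j$, which of the finitely many affine branches of $\{cu\}$ and $\{au+\tfrac12\}$ is active at the crossing, and verifying the exact thresholds $371,373,1114,1116,1855,1859$. I would handle this by writing $u=(j+y)/b$ with $0\le y<1$, expressing $cu=\tfrac32 j-j/b+y(\tfrac32-1/b)$, and determining $\lfloor cu\rfloor$ on the relevant $y$-range as a function of $j$. This reduces each threshold to an inequality of the form $j/b\lessgtr$ a simple rational, which can be checked exactly.
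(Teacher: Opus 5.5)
Your outline is the paper's: cut $[0,1)$ at the points $j/b$, show that the part of $E$ in each cell is an initial segment, take the right end to be the earlier of two candidate points, compare them by cross-multiplication, then count and sum. But the second step, which carries all the content, is wrong as written. You have swapped the two candidates, so your threshold narrative runs backwards.

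Consider the cell before $cu$ reaches its next integer. There $z=\tfrac12+\tfrac32y-u$ for odd $j<1857$, and $z=1+\tfrac32y-u$ for even $j$. Correspondingly $x+2y$ equals $\tfrac52y$ or $\tfrac12+\tfrac52y$. In both cases $x+2y=z$ reduces to $y+u=\tfrac12$, i.e.\ $u=D_j=(2j+1)/(2d)$. So the equation you display gives $(3j+1)/(2c)$ only at $j=371$; likewise $3j/(2c)$ is the root only at $j=1114$.

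The points $(3j+1)/(2c)$, $3j/(2c)$ and $(3j-1)/(2c)$ are instead where $cu$ reaches the integers $(3j+1)/2$, $3j/2$, $(3j-1)/2$. There $z$ drops to $0$ and the strict inequality fails. Conversely, $(2j+1)/(2d)$ is the zero of the linear branch, not a wrap point. Note that $x$, being $y/2$ or $(y+1)/2$, never wraps inside an open cell.

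Cross-multiplication gives $C_j\le D_j$ iff $5j\le1855$ for odd $j$, and iff $5j\le5570$ for even $j$. So the wrap of $cu$ is binding for \emph{small} $j$ (families I and II). The linear root is binding for odd $373\le j\le1855$ and even $1116\le j\le1856$ (families III and IV). For odd $j\ge1859$ the linear root is $(2j+3)/(2d)$, and the wrap always comes first because $(2j+3)c-(3j-1)d=20425-5j>0$. Already at $j=1$ the component ends at $2/5570$, where $cu=2$, strictly before $D_1=3/7430$.

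Two smaller points also need repair.

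First, $x+2y-z$ is not affine on a cell: since $c>b$, $cu$ crosses an integer inside every cell. What is true, and sufficient, is that it is piecewise affine with slope $d>0$ and jumps by $+1$ at each wrap, hence nondecreasing. This is the paper's observation that $F$ can only decrease inside an open cell.

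Second, decide the empty cells by the sign of $x+2y-z$ at $u=j/b$:
\begin{itemize}
\item $j/b-\tfrac12$ for odd $j<1857$;
\item $0$ for $j=1857$;
\item $j/b-\tfrac32$ for odd $j>1857$;
\item $j/b-\tfrac12$ for even $j\ge2$;
\item $\tfrac12$ for $j=0$.
\end{itemize}
This excludes $j=0$, the equality case $j=1857$ (which your proposal never addresses), and every even $j\ge1858$. With these corrections the interval counts and the summation of lengths go through as you describe.
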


\begin{proof}
Fix \(0\leq j\leq b-1\), and write
\(u=(j+v)/b\), where \(0\leq v<1\).  Because \(b=2a\),
\[
 \left\lfloor au+\frac12\right\rfloor=\left\lceil\frac j2\right\rceil,
 \qquad \lfloor bu\rfloor=j.
\]
After removing the fractional parts, the inequality in
\eqref{eq:periodic-set-E} becomes
\begin{equation}\label{eq:E-floor-form}
 du+\frac12<F(u),\qquad
 F(u)=\left\lfloor au+\frac12\right\rfloor
       +2\lfloor bu\rfloor-\lfloor cu\rfloor.
\end{equation}
Inside the open \(b\)-cell, \(F\) can only decrease: every jump there
comes from \(\lfloor cu\rfloor\).  Hence the part of \(E\) in that cell
is either empty or an interval starting at \(j/b\).

For \(1\leq j\leq b-1\), the relation \(c=3b/2-1\) gives
\begin{equation}\label{eq:qj-table}
 q_j:=\left\lfloor\frac{cj}{b}\right\rfloor
 =\begin{cases}
  3j/2-1,&j\text{ even},\\
  (3j-1)/2,&j\text{ odd and }j\leq1857,\\
  (3j-3)/2,&j\text{ odd and }j\geq1859.
 \end{cases}
\end{equation}
Thus the right side of \eqref{eq:E-floor-form}, evaluated immediately
after the left endpoint, is
\begin{equation}\label{eq:Fj-table}
 F_j=\begin{cases}
  j+1,&1\leq j\leq1857,\\
  j+1,&j\geq1858\text{ and }j\text{ even},\\
  j+2,&j\geq1859\text{ and }j\text{ odd}.
 \end{cases}
\end{equation}
At the left endpoint the left side of \eqref{eq:E-floor-form} equals
\(j+j/b+1/2\).  It follows that precisely the indices
\begin{equation}\label{eq:active-j-indices}
 1\leq j\leq1856,\qquad\text{and}\qquad
 1859\leq j\leq3713\text{ with }j\text{ odd}
\end{equation}
give nonempty intervals.  The cell \(j=0\) is empty; at \(j=1857\)
there is equality rather than the strict inequality; and every even
\(j\geq1858\) is excluded.  This proves, in particular, that there are
no omitted components.

For an active lower index, the component ends either at the next jump of
\(\lfloor cu\rfloor\) or when the increasing left side of
\eqref{eq:E-floor-form} reaches \(F_j\).  If \(j\leq1856\), the latter
point is
\begin{equation}\label{eq:d-threshold}
 D_j=\frac{2j+1}{2d}.
\end{equation}
The next \(c\)-grid point is
\begin{equation}\label{eq:c-thresholds}
 C_j=\begin{cases}
  (3j+1)/(2c),&j\leq1855\text{ odd},\\
  3j/(2c),&j\text{ even},\\
  (3j-1)/(2c),&j\geq1859\text{ odd}.
 \end{cases}
\end{equation}
For lower odd indices, direct cross-multiplication gives
\[
 C_j\leq D_j\quad\Longleftrightarrow\quad1855-5j\geq0;
\]
for lower even indices it gives
\[
 C_j\leq D_j\quad\Longleftrightarrow\quad5570-5j\geq0.
\]
Equality occurs only at \(j=371\) and \(j=1114\), respectively.  For
odd \(j\geq1859\), the next \(c\)-grid point precedes the linear
threshold \((2j+3)/(2d)\), since
\[
 (2j+3)c-(3j-1)d=20425-5j>0.
\]
At a \(c\)-grid point the integer \(F\) drops by one, so the strict
inequality fails there: the initial gap \(F_j-j-j/b-1/2\) lies strictly
between zero and one in every active cell and decreases before the jump.
At a \(d\)-threshold the inequality becomes equality.  This proves
exactly \eqref{eq:E-families-first}--
\eqref{eq:E-families-third}, including all endpoint conventions.
Since \(c/b>1\), each \(C_j\) lies before the next \(b\)-cell boundary;
when \(D_j\) is selected, it is no larger than \(C_j\).  Thus every
listed component lies in its assigned cell.

The component counts follow from the five arithmetic progressions of
indices.  The complete elementary summation is displayed here:
\begin{center}
\small
\begin{tabular}{crrcc}
\toprule
family & count & \(\sum j\) & one component's length & family length\\
\midrule
I   &186&34596  &\((2j+b)/(2cb)\)&\(63333/3447830\)\\
II  &557&310806 &\(j/(cb)\)&\(93/6190\)\\
III &742&826588 &\((b-2j)/(2db)\)&\(371/9285\)\\
IV  &371&551306 &\((b-2j)/(2db)\)&\(137641/13797510\)\\
V   &928&2585408&\((2j-b)/(2cb)\)&\(215528/5171745\)\\
\bottomrule
\end{tabular}
\end{center}
Their sum is the fraction in \eqref{eq:E-measure}.  The first permitted
index is \(j=1\), proving the assertion about the first left endpoint.
\end{proof}

\subsection{Passage from primes to a convergent series}

We record the precise periodic prime-product argument.  Let
\(\vartheta(x)=\sum_{p\leq x}\log p\).

\begin{lemma}[Periodic prime products]\label{lem:periodic-prime-product}
Let \(D>1\), and let
\[
 E_0=\bigcup_{k=1}^{m}[\ell_k,r_k)\subset(0,1)
\]
be a disjoint finite union, with \(\ell_0:=\min_k\ell_k>1/D\).
If
\[
 Q_n=\prod_{\substack{\sqrt{Dn}<p\leq Dn\\
                 \{n/p\}\in E_0}}p,
\]
then
\begin{equation}\label{eq:periodic-product-limit}
 \lim_{n\to\infty}\frac{\log Q_n}{n}
 =\sum_{k=1}^{m}\sum_{q=0}^{\infty}
 \left(\frac1{q+\ell_k}-\frac1{q+r_k}\right).
\end{equation}
\end{lemma}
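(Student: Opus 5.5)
We prove the limit by sorting the primes according to the integer part of $n/p$, so that the periodic condition becomes a condition on where $p$ lies inside explicit intervals, and then applying the prime number theorem $\vartheta(x)\sim x$.

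\textbf{Reduction to intervals.} For a prime $p$ with $\sqrt{Dn}<p\le Dn$ write $n/p=q+u$ with $q=\lfloor n/p\rfloor\ge0$ and $u=\{n/p\}$. The condition $u\in[\ell_k,r_k)$ is equivalent to
\[
 \frac{n}{q+r_k}<p\le\frac{n}{q+\ell_k}.
\]
For $q=0$ this reads $n/r_k<p\le n/\ell_k$. The hypothesis $\ell_k\ge\ell_0>1/D$ gives $n/\ell_k<Dn$, so the upper constraint $p\le Dn$ never interferes. The lower constraint $p>\sqrt{Dn}$ only matters when $q\gtrsim\sqrt{n/D}$. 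Since the intervals $[\ell_k,r_k)$ are disjoint, we get the exact identity
\[
 \log Q_n=\sum_{k=1}^m\sum_{q\ge0}\Bigl(\vartheta\bigl(\tfrac{n}{q+\ell_k}\bigr)-\vartheta\bigl(\max\{\tfrac{n}{q+r_k},\sqrt{Dn}\}\bigr)\Bigr)_+ .
\]
Here only the terms with $n/(q+\ell_k)>\sqrt{Dn}$ are nonzero.

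\textbf{Limit for each fixed $q$.} Fix $q$ and $k$ and divide the corresponding term by $n$. By the prime number theorem it tends to
\[
 \frac1{q+\ell_k}-\frac1{q+r_k}.
\]

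\textbf{Uniform tail bound.} This is the main obstacle, since infinitely many $q$ contribute. We split the sum over $q$ at a fixed cutoff $Q_0$ and use the Chebyshev bound $\vartheta(y)-\vartheta(x)\le 2(y-x)+O(\log y)$, which follows from the Brun--Titchmarsh inequality or from $\vartheta(y)-\vartheta(x)\ll y-x+\sqrt y$.

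The cheaper route is to observe that for $q>Q_0$ every relevant prime lies in $(\sqrt{Dn},\,n/Q_0]$. The total contribution of the tail is therefore at most
\[
 \vartheta(n/Q_0)\le 1.1\,n/Q_0 ,
\]
by the Chebyshev upper bound, with no need to separate the terms. The finite part $q\le Q_0$ converges by the previous step, and the corresponding tail of the series $\sum_q\bigl(1/(q+\ell_k)-1/(q+r_k)\bigr)$ is $O(1/Q_0)$, because each term is $O(q^{-2})$.

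\textbf{Conclusion.} Combining the finite part with the tail estimate shows that $\limsup$ and $\liminf$ of $\log Q_n/n$ both lie within $O(1/Q_0)$ of the right side of \eqref{eq:periodic-product-limit}. Letting $Q_0\to\infty$ proves the lemma. The only care needed is boundary bookkeeping at half-open endpoints and at $\sqrt{Dn}$; each of these affects at most $O(1)$ primes per interval and $O(\sqrt n)$ in total, so it is negligible after division by $n$.
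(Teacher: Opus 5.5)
Your proof is correct and follows essentially the same route as the paper. You decompose by $q=\lfloor n/p\rfloor$ via $n/(q+r_k)<p\le n/(q+\ell_k)$, apply $\vartheta(x)\sim x$ to the finitely many $q<Q_0$, bound all remaining primes $p\le n/Q_0$ by $\vartheta(n/Q_0)=O(n/Q_0)$, and let $Q_0\to\infty$; the paper differs only cosmetically, discarding the lower cutoff at the outset via $\vartheta(\sqrt{Dn})=o(n)$ rather than carrying it in the $\max$. Delete the Brun--Titchmarsh aside: the bound $\vartheta(y)-\vartheta(x)\le 2(y-x)+O(\log y)$ does not follow from the results you cite, and your argument never uses it.
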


\begin{proof}
The upper cutoff causes no loss.  Indeed, if
\(\{n/p\}\in E_0\), then \(n/p\geq\ell_0\), and hence
\(p\leq n/\ell_0<Dn\).  Removing the lower cutoff changes the logarithm
by at most \(\vartheta(\sqrt{Dn})=o(n)\).

Write \(n/p=q+u\), where \(q\geq0\) is an integer and \(0\leq u<1\).
The half-open endpoint convention gives the exact equivalence
\begin{equation}\label{eq:prime-interval-inversion}
 u\in[\ell_k,r_k)
 \quad\Longleftrightarrow\quad
 \frac{n}{q+r_k}<p\leq\frac{n}{q+\ell_k}.
\end{equation}
Thus left endpoints are included and right endpoints are excluded, even
when one of the displayed quotients happens to be a prime.  A prime
dividing \(n\) has \(u=0\notin E_0\), so it contributes to neither side.

Fix a positive integer \(T\).  The primes with \(p>n/T\) correspond
exactly to \(0\leq q\leq T-1\).  By
\eqref{eq:prime-interval-inversion} and the prime number theorem,
\(\vartheta(x)\sim x\), their logarithmic contribution divided by \(n\)
tends to
\[
 \sum_{k=1}^{m}\sum_{q=0}^{T-1}
 \left(\frac1{q+\ell_k}-\frac1{q+r_k}\right).
\]
The remaining selected primes lie below \(n/T\), so their total
logarithm is at most \(\vartheta(n/T)\).  Consequently its limsup after
division by \(n\) is at most \(1/T\).  Finally, the series in
\eqref{eq:periodic-product-limit} converges, since its summand is
\[
 \frac{r_k-\ell_k}{(q+\ell_k)(q+r_k)}=O((q+1)^{-2}).
\]
Letting \(T\) tend to infinity proves the assertion and handles both
moving cutoffs and the infinite tail.
\end{proof}

For a prime \(p\), putting \(u=\{n/p\}\) in the condition defining
\(\mathcal P_n\) is legitimate because \(a,b,c\) are integers.  That
condition is exactly \(u\in E\).  Proposition~\ref{prop:E-decomposition}
also gives
\[
 \min_k\ell_k=\frac1{3714}>\frac1{7430}.
\]
Lemma~\ref{lem:periodic-prime-product}, with \(D=d_0=7430\), therefore
proves
\begin{equation}\label{eq:omega-series}
 \omega:=\lim_{n\to\infty}\frac{\log\Phi_n}{n}
 =\sum_{k=1}^{2784}\sum_{q=0}^{\infty}
 \left(\frac1{q+\ell_k}-\frac1{q+r_k}\right)
 =\sum_{k=1}^{2784}\bigl(\psi(r_k)-\psi(\ell_k)\bigr),
\end{equation}
where \(\psi\) is the logarithmic derivative of the gamma function.

\subsection{A rigorous numerical enclosure}

We first give a direct series enclosure.  For one component
\([\ell,r)\), put
\[
 f_{\ell,r}(t)=\frac{r-\ell}{(t+\ell)(t+r)}.
\]
This function is positive and decreasing on \([0,\infty)\), and hence,
for every positive integer \(K\),
\begin{equation}\label{eq:tail-integral-bound}
 \log\frac{K+r}{K+\ell}
 \leq\sum_{q=K}^{\infty}f_{\ell,r}(q)
 \leq\log\frac{K+r}{K+\ell}+f_{\ell,r}(K).
\end{equation}
Thus, if
\[
 S_K=\sum_{k=1}^{2784}\sum_{q=0}^{K-1}f_{\ell_k,r_k}(q),
 \qquad
 T_K=\sum_{k=1}^{2784}\log\frac{K+r_k}{K+\ell_k},
\]
then \(S_K+T_K\leq\omega\leq S_K+T_K+\Delta_K\), where
\begin{equation}\label{eq:direct-tail-width}
 0<\Delta_K<\frac{|E|}{K^2}.
\end{equation}
The entirely explicit choice \(K=100000\) gives
\[
 \Delta_K<1.251\cdot10^{-11},
 \qquad \frac{\Delta_K}{5570}<2.245\cdot10^{-15}.
\]
Here \(S_K\) is rational and \(T_K\) is a displayed finite sum of
elementary logarithms, so \eqref{eq:tail-integral-bound} is already a
rational--elementary-log enclosure narrow enough for the twelve-decimal
estimate below.

For completeness, and to print substantially more digits without a long
partial sum, we give a second finite recurrence with a much smaller error.
The Bernoulli numbers needed below are
\begin{equation}\label{eq:bernoulli-list}
\begin{gathered}
 B_2=\frac16,\ B_4=-\frac1{30},\ B_6=\frac1{42},\
 B_8=-\frac1{30},\ B_{10}=\frac5{66},\
 B_{12}=-\frac{691}{2730},\\
 B_{14}=\frac76,\ B_{16}=-\frac{3617}{510},\
 B_{18}=\frac{43867}{798},\ B_{20}=-\frac{174611}{330},\
 B_{22}=\frac{854513}{138},\ B_{24}=-\frac{236364091}{2730}.
\end{gathered}
\end{equation}
For \(0<x<1\), define the rational number
\begin{align}
 \Lambda(x)&=2\sum_{q=0}^{7}\frac1{2q+1}
 \left(\frac{x}{200+x}\right)^{2q+1},\label{eq:finite-log-series}\\
 D(x)&=\Lambda(x)-\frac1{2(100+x)}
 -\sum_{m=1}^{11}\frac{B_{2m}}{2m(100+x)^{2m}}
 -\sum_{q=0}^{99}\frac1{x+q}.\label{eq:finite-digamma-recurrence}
\end{align}
The identity \(\psi(x)=\psi(x+100)-\sum_{q=0}^{99}(x+q)^{-1}\)
reduces the verification to the positive number \(100+x\).  Since
\[
 \log(100+x)=\log100+\log\left(1+\frac{x}{100}\right)
\]
and \(0<x/(200+x)<1/201\), the atanh series gives
\begin{equation}\label{eq:finite-log-error}
 0<\log\left(1+\frac{x}{100}\right)-\Lambda(x)
 <\frac{2}{17(1-201^{-2})201^{17}}.
\end{equation}
On the other hand, Binet's integral formula
\[
 \psi(y)=\log y-\frac1{2y}
 -2\int_0^\infty\frac{t}{(t^2+y^2)(e^{2\pi t}-1)}\,dt
\]
and the finite geometric expansion of \((t^2+y^2)^{-1}\) show that
\begin{equation}\label{eq:digamma-remainder}
 \left|\psi(y)-\log y+\frac1{2y}
 +\sum_{m=1}^{11}\frac{B_{2m}}{2m y^{2m}}\right|
 <\frac{|B_{24}|}{24y^{24}}\qquad(y>0).
\end{equation}
Indeed, after eleven terms the geometric remainder has one sign, and
replacing \(t^2+y^2\) in its denominator by \(y^2\) gives exactly the
next Bernoulli moment, by
\[
 2\int_0^\infty\frac{t^{2m-1}}{e^{2\pi t}-1}\,dt
 =\frac{|B_{2m}|}{2m}\qquad(m\geq1).
\]
Combining \eqref{eq:finite-log-error} and
\eqref{eq:digamma-remainder}, we obtain
\begin{equation}\label{eq:D-uniform-error}
 \left|\psi(x)-\log100-D(x)\right|<\varepsilon_0,
 \quad
 \varepsilon_0=
 \frac{2}{17(1-201^{-2})201^{17}}
 +\frac{236364091}{2730\cdot24\cdot100^{24}}.
\end{equation}
In particular,
\begin{equation}\label{eq:aggregate-digamma-error}
 5568\varepsilon_0<4.6\cdot10^{-37}.
\end{equation}

Equations \eqref{eq:E-families-first}--
\eqref{eq:E-families-third} order the endpoint pairs
\((\ell_\nu,r_\nu)\), first by family and then by increasing \(j\).
Starting with \(W_0=0\), use the printed rational recurrence
\begin{equation}\label{eq:W-rational-recurrence}
 W_\nu=W_{\nu-1}+D(r_\nu)-D(\ell_\nu),
 \qquad1\leq\nu\leq2784.
\end{equation}
Every entry in this recurrence is rational.  Exact cross-multiplication
against the two terminating decimals gives
\begin{equation}\label{eq:W-exact-enclosure}
 1202.5657733612918002543147
 <W_{2784}<
 1202.5657733612918002543148.
\end{equation}
The common term \(\log100\) cancels in every endpoint difference.
Together with \eqref{eq:aggregate-digamma-error}, this proves the strict
enclosure
\begin{equation}\label{eq:omega-enclosure}
 \boxed{
 1202.5657733612918002543146
 <\omega<
 1202.5657733612918002543149.}
\end{equation}
In particular,
\[
 \omega=1202.565773361291800254314\ldots .
\]
The endpoint lists, \eqref{eq:finite-digamma-recurrence}, and
\eqref{eq:W-rational-recurrence} constitute a finite rational
calculation; all infinite tails have already been bounded in
\eqref{eq:aggregate-digamma-error}.

\subsection{The normalization rate}

The prime number theorem also gives
\[
 \log\lcm(1,2,\ldots,d_0n)=d_0n+o(n).
\]
Using \eqref{eq:arithmetic-multiplier} and
\eqref{eq:omega-series}, we therefore obtain the normalized clearing-
denominator cost
\begin{equation}\label{eq:C-definition-value}
 \mathcal C:=\lim_{n\to\infty}\frac{\log M_n}{cn}
 =\frac{7430-4645\log2-\omega}{5570}.
\end{equation}
For a fully explicit bound on the remaining logarithm, put
\begin{equation}\label{eq:log-two-series}
 L_2=2\sum_{q=0}^{49}\frac{1}{(2q+1)3^{2q+1}}.
\end{equation}
The same atanh estimate gives
\begin{equation}\label{eq:log-two-error}
 0<\log2-L_2
 <\frac{2}{101\cdot3^{101}(1-3^{-2})}<1.5\cdot10^{-50}.
\end{equation}
Combining \eqref{eq:omega-enclosure} and
\eqref{eq:log-two-error} in \eqref{eq:C-definition-value} yields
\begin{equation}\label{eq:C-enclosure}
 \boxed{
 0.539993819198880114452898006
 <\mathcal C<
 0.539993819198880114452898061.}
\end{equation}
Thus
\[
 \mathcal C=0.5399938191988801144528980\ldots .
\]

\section{Coefficient growth and contour decay}\label{sec:asymptotics}

We now obtain the two analytic rates needed later.  The coefficient
argument is arranged so that positivity gives an ordinary limit.  The
integral argument uses a deformation in the shifted \(t\)-plane and only
claims the limsup that the irrationality-measure argument requires.

\subsection{Exact extraction of the coefficient}

Recall from \eqref{eq:rational-linear-form} that
\(v_n=-c_{0,n}/2\), where \(c_{0,n}\) is the residue of \(R_n(t)\,dt\)
at \(t=-5\).  Put
\begin{equation}\label{eq:mobius-z-change}
 z=\frac{t+5}{t-5},\qquad
 t=-5\frac{1+z}{1-z},\qquad
 dt=-\frac{10}{(1-z)^2}\,dz
\end{equation}
and
\begin{equation}\label{eq:positive-P-polynomial}
 P(z)=(z^2+2z+2)(2z^2+2z+1)
     =2+6z+9z^2+6z^3+2z^4.
\end{equation}
Direct substitution gives
\[
 25-t^2=-\frac{100z}{(1-z)^2},\qquad
 t^4+6t^2+25=\frac{400P(z)}{(1-z)^4}.
\]
Since \(cn=5570n\) is even, the signs from
\((-100)^{cn+1}\) and \(dt\) cancel.  Hence
\begin{equation}\label{eq:transformed-residue-form}
 R_n(t)\,dt
 =\frac12
 \left(5^{2(a+b-c)}2^{4b-2c}\right)^n
 z^{-cn-1}\frac{(1+z)^{2an}P(z)^{bn}}
 {(1-z)^{d_0n}}\,dz.
\end{equation}
Indeed, the constant before the \(n\)th power is accounted for by
\[
 \frac{50\,5^{2an}400^{bn}}{100^{cn+1}}
 =\frac12\,5^{2(a+b-c)n}2^{(4b-2c)n},
\]
and the exponent of \(1-z\) is \(-d_0n\).  Define
\begin{equation}\label{eq:positive-S-function}
 S(z)=\frac{(1+z)^{3714}P(z)^{3714}}{(1-z)^{7430}}.
\end{equation}
Taking the residue at \(z=0\), using \(a+b-c=1\) and
\(4b-2c=3716\), proves the exact identity
\begin{equation}\label{eq:exact-vn-coefficient}
 \boxed{
 |v_n|=\frac{(25\cdot2^{3716})^n}{4}
 [z^{5570n}]S(z)^n.}
\end{equation}
The factor \(1/2\) in \eqref{eq:transformed-residue-form} and the factor
\(1/2\) in \(v_n=-c_{0,n}/2\) explain the denominator \(4\).
All coefficients in \eqref{eq:exact-vn-coefficient} are positive, so
there is no cancellation.

We include the precise positive-coefficient estimate that will be used.

\begin{lemma}[Positive aperiodic coefficient powers]
\label{lem:positive-coefficient-powers}
Let \(F(z)=\sum_{k\geq0}f_kz^k\) have nonnegative coefficients and
radius of convergence \(R>0\).  Fix \(x\in(0,R)\), and suppose that the
support of the coefficients has span one, the probability law
\[
 \mathbb P(X=k)=\frac{f_kx^k}{F(x)}
\]
is nondegenerate, and its mean is an integer \(\mu\).  If
\(\sigma^2=\operatorname{Var}(X)>0\), then
\begin{equation}\label{eq:positive-power-asymptotic}
 [z^{\mu n}]F(z)^n
 =\frac{F(x)^nx^{-\mu n}}{\sqrt{2\pi\sigma^2n}}
  (1+o(1)).
\end{equation}
\end{lemma}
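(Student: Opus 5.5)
The plan is the classical exponential tilting argument, which turns the statement into a local central limit theorem for lattice variables. With $x$ fixed, let $X_1,\dots,X_n$ be i.i.d.\ copies of $X$. Since the $f_k$ are nonnegative and $x<R$, the law of $X$ is well defined, and $X$ has moments of every order: $F$ is analytic on a disc of radius larger than $x$, so $\mathbb E[e^{\lambda X}]=F(xe^{\lambda})/F(x)$ is finite for small $\lambda>0$. Expanding $F(z)^n$ at $z=x$ gives the exact identity
\[
 [z^{m}]F(z)^n=\frac{F(x)^n}{x^{m}}\,\mathbb P(X_1+\cdots+X_n=m).
\]
Take $m=\mu n$, which is an integer because $\mu$ is. The claim \eqref{eq:positive-power-asymptotic} then reduces to showing
\[
 \mathbb P(S_n=\mu n)=\frac{1+o(1)}{\sqrt{2\pi\sigma^2 n}},\qquad S_n=X_1+\cdots+X_n .
\]

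To prove this, I would invoke Gnedenko's local limit theorem, or prove it directly by Fourier inversion, which is the more self-contained route. Put $\varphi(\theta)=\mathbb E[e^{i\theta X}]=F(xe^{i\theta})/F(x)$. Then
\[
 \mathbb P(S_n=\mu n)=\frac1{2\pi}\int_{-\pi}^{\pi}\varphi(\theta)^n e^{-i\mu n\theta}\,d\theta .
\]
Split the integral at $|\theta|=\delta$ for a small fixed $\delta$.

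On the near range $|\theta|\le\delta$, the finite third moment gives the Taylor expansion
\[
 \varphi(\theta)e^{-i\mu\theta}=1-\tfrac12\sigma^2\theta^2+O(\theta^3).
\]
Substituting $\theta=s/\sqrt n$ and using dominated convergence yields $\frac{1}{2\pi\sqrt n}\int e^{-\sigma^2s^2/2}\,ds\,(1+o(1))=(2\pi\sigma^2n)^{-1/2}(1+o(1))$. The domination comes from $|\varphi(\theta)|\le e^{-\sigma^2\theta^2/4}$ for $|\theta|\le\delta$, which holds once $\delta$ is small enough.

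The main obstacle is the far range $\delta\le|\theta|\le\pi$, and this is where the aperiodicity hypothesis enters. Here I need $|\varphi(\theta)|\le 1-\eta<1$ for some $\eta>0$. Suppose $|\varphi(\theta_0)|=1$ for some $\theta_0\ne0$ in $[-\pi,\pi]$. Then the equality case of the triangle inequality forces all the points $f_kx^ke^{ik\theta_0}$ with $f_k>0$ to share one argument. That means $\theta_0(k-k')\in2\pi\mathbb Z$ for every pair $k,k'$ in the support, so $\theta_0\in2\pi\mathbb Z/g$, where $g$ is the gcd of the support differences. Since the span is $g=1$, this contradicts $0<|\theta_0|\le\pi$. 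The function $|\varphi|$ is continuous on the compact set $\delta\le|\theta|\le\pi$, so it attains a maximum there, and that maximum is strictly below $1$. The far range therefore contributes $O((1-\eta)^n)=o(n^{-1/2})$.

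Adding the two ranges gives $\mathbb P(S_n=\mu n)=(1+o(1))/\sqrt{2\pi\sigma^2n}$, and the exact identity above converts this into \eqref{eq:positive-power-asymptotic}. The nondegeneracy hypothesis is only needed to guarantee $\sigma^2>0$, which is assumed anyway.
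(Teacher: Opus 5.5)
Your proposal is correct and follows the paper's proof essentially step for step. Both arguments use the same three ingredients: the tilting identity $[z^{\mu n}]F(z)^n=F(x)^nx^{-\mu n}\,\mathbb P(X_1+\cdots+X_n=\mu n)$, Fourier inversion with a Gaussian domination $|\varphi(\theta)|\le e^{-\kappa\theta^2}$ near $\theta=0$, and the span-one compactness bound $|\varphi|\le q<1$ on the complementary arc. Your equality-case argument, which reads span one as the gcd of support differences, spells out a step that the paper only asserts.
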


\begin{proof}
For independent copies \(X_1,\ldots,X_n\), coefficient extraction gives
the positive identity
\begin{equation}\label{eq:probabilistic-coefficient-identity}
 [z^{\mu n}]F(z)^n
 =F(x)^nx^{-\mu n}
  \mathbb P(X_1+\cdots+X_n=\mu n).
\end{equation}
The law has an exponential moment in a neighborhood of the origin.  Its
characteristic function \(\varphi(\theta)\) therefore satisfies
\[
 e^{-i\mu\theta}\varphi(\theta)
 =1-\frac{\sigma^2\theta^2}{2}+O(|\theta|^3).
\]
Span one implies that \(|\varphi(\theta)|<1\) for
\(0<|\theta|\leq\pi\).  Compactness gives a bound \(q<1\) away from a
fixed neighborhood of zero, while the displayed expansion gives
\(|\varphi(\theta)|\leq e^{-\kappa\theta^2}\) inside that neighborhood.
Fourier inversion, followed by \(\theta=u/\sqrt n\), now yields
\[
 \mathbb P(X_1+\cdots+X_n=\mu n)
 =\frac{1+o(1)}{2\pi\sqrt n}
   \int_{-\infty}^{\infty}e^{-\sigma^2u^2/2}\,du
 =\frac{1+o(1)}{\sqrt{2\pi\sigma^2n}}.
\]
The Gaussian bound controls the central tails, and the complementary
arc contributes \(O(q^n)\).  Substitution in
\eqref{eq:probabilistic-coefficient-identity} proves the lemma.
\end{proof}

Write \(S(z)=\sum_{k\geq0}s_kz^k\).  Each factor in
\eqref{eq:positive-S-function} has nonnegative coefficients and positive
constant term, and \((1-z)^{-7430}\) has a positive coefficient in every
degree.  Thus
\begin{equation}\label{eq:S-strict-positivity}
 s_k>0\qquad(k\geq0).
\end{equation}
For \(0<x<1\), put
\[
 m(x)=\frac{xS'(x)}{S(x)}.
\]
Under the probability law
\(\mathbb P(X_x=k)=s_kx^k/S(x)\), one has
\begin{equation}\label{eq:mean-variance-saddle}
 \mathbb E X_x=m(x),\qquad
 \operatorname{Var}(X_x)=x m'(x)>0.
\end{equation}
It follows that \(m\) is strictly increasing.  Moreover,
\[
 m(x)=\frac{3714x}{1+x}
      +3714\frac{xP'(x)}{P(x)}
      +7430\frac{x}{1-x},
\]
so \(m(x)\to0\) as \(x\downarrow0\) and \(m(x)\to\infty\) as
\(x\uparrow1\).  There is consequently a unique
\(\zeta\in(0,1)\) with \(m(\zeta)=5570\).  Set
\(\mathfrak v=\zeta m'(\zeta)>0\).

The support in \eqref{eq:S-strict-positivity} contains \(0\) and \(1\),
so it has span one.  Lemma~\ref{lem:positive-coefficient-powers}, applied
with \(F=S\), \(x=\zeta\), and \(\mu=5570\), gives
\begin{equation}\label{eq:S-coefficient-asymptotic}
 [z^{5570n}]S(z)^n
 \sim\frac{S(\zeta)^n\zeta^{-5570n}}
 {\sqrt{2\pi\mathfrak v n}}.
\end{equation}
Together with \eqref{eq:exact-vn-coefficient}, this proves an ordinary
limit, rather than only a limsup:
\begin{proposition}[Coefficient rate]\label{prop:coefficient-rate}
One has
\begin{equation}\label{eq:r-definition}
 \lim_{n\to\infty}\frac{\log|v_n|}{5570n}=r,
 \qquad
 r=\frac{\log25+3716\log2+\log S(\zeta)-5570\log\zeta}
 {5570}.
\end{equation}
Exact rational root isolation and finite logarithm sums, printed in
Appendix~\ref{sec:exact-calculations}, give
\begin{equation}\label{eq:r-enclosure}
 3.33194398617829933828283250382326
 <r<
 3.33194398617829933828283250382327.
\end{equation}
In particular,
\(r=3.331943986178299338282832503823\ldots\).
\end{proposition}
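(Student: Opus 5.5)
The statement has two parts.

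\emph{The limit.} It follows almost formally from \eqref{eq:exact-vn-coefficient} and \eqref{eq:S-coefficient-asymptotic}. Taking logarithms in \eqref{eq:exact-vn-coefficient} gives
\[
 \log|v_n|=n(\log25+3716\log2)-\log4+\log[z^{5570n}]S(z)^n .
\]
By \eqref{eq:S-coefficient-asymptotic}, which is justified through Lemma~\ref{lem:positive-coefficient-powers} (span one, positive variance $\mathfrak v$, integer mean $5570$ at the unique saddle $\zeta$), the last term equals
\[
 n\log S(\zeta)-5570n\log\zeta-\tfrac12\log(2\pi\mathfrak v n)+o(1).
\]
Dividing by $5570n$, the constant and $\log n$ terms vanish in the limit. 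This yields an honest limit equal to the displayed $r$. Positivity of all $s_k$ means there is no cancellation, so no limsup is needed.

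\emph{The enclosure \eqref{eq:r-enclosure}.} This is the real work, in two steps.

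First, I would isolate $\zeta$ rigorously. Clearing denominators in $m(x)=5570$ gives a polynomial equation:
\[
 3714x(1-x)P(x)+3714x(1-x)(1+x)P'(x)+7430x(1+x)P(x)=5570(1+x)(1-x)P(x).
\]
This has degree about $6$ with integer coefficients. Since $m$ is strictly increasing on $(0,1)$, there is exactly one root there. I would bracket it by rationals $x_-<\zeta<x_+$ of width about $10^{-36}$, verifying the two sign changes by exact rational arithmetic.

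Second, I would bound $g(x)=\log S(x)-5570\log x$ on that interval. The key observation is $g'(\zeta)=(m(\zeta)-5570)/\zeta=0$, so $g$ is stationary at $\zeta$. Hence the error is quadratic in the bracket width:
\[
 |g(x)-g(\zeta)|\le \tfrac12\sup|g''|\,(x_+-x_-)^2 .
\]
A crude rational bound on $g''=(m'(x)x-m(x)+5570)/x^2$ near $\zeta$ then makes this error negligible, and it even permits a coarser bracket.

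It then remains to evaluate $g$ at a rational point to about $35$ digits. I would write
\[
 g=3714\log(1+x)+3714\log P(x)-7430\log(1-x)-5570\log x ,
\]
and reduce each logarithm to atanh series with explicit tail bounds, as in \eqref{eq:finite-log-error} and \eqref{eq:log-two-series}. For example, $\log y=k\log2+2\operatorname{atanh}((y'-1)/(y'+1))$ after scaling $y$ by a power of $2$, with $\log2$ supplied by $L_2$ and its error bound \eqref{eq:log-two-error}. The term $\log25$ similarly needs $\log5$ from a series.

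\emph{Main obstacle.} The hardest step is the error bookkeeping. The coefficients $3714$, $7430$ and $5570$ amplify each logarithm's error by roughly $10^4$, and the final answer is divided by $5570$. The series truncation lengths must therefore be chosen so that the aggregate error stays below $10^{-32}$. The last digit of the claimed enclosure is tight, so I would target an absolute error of $10^{-34}$ and check the endpoints by exact rational comparison, as the paper does for $W_{2784}$.
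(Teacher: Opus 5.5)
Your proposal is correct. The limit is obtained exactly as in the paper, by taking logarithms in \eqref{eq:exact-vn-coefficient} and inserting \eqref{eq:S-coefficient-asymptotic}. Your route to the enclosure \eqref{eq:r-enclosure}, however, is different.

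The paper never brackets $\zeta$ from the sextic saddle numerator directly. Via \eqref{eq:N-to-stationary-cubic} it passes to the real root $\rho$ of the stationary cubic $p$, and isolates $\rho$ to width $10^{-50}$ by exact sign checks. It transfers this to $z_-<\zeta<z_+$ through \eqref{eq:rho-zeta-relation}. It then evaluates each logarithm in $r$ at whichever of $z_\pm$ its monotonicity dictates, using atanh sums with $N=120$. That is plain interval arithmetic, with error linear in $z_+-z_-$. This is why so fine a bracket is needed; the resulting width is below $8.4\cdot10^{-50}$.

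Your observation $g'(\zeta)=(m(\zeta)-5570)/\zeta=0$ makes the error quadratic instead. Since $g''(\zeta)=\mathfrak v/\zeta^2$ is only of order $10^5$, a bracket of width about $10^{-18}$ already suffices for a $10^{-34}$ target, and you need only one rational evaluation point. You can even drop half of the $g''$ bookkeeping. Because $m$ is increasing, $\zeta$ is the global minimizer of $g$ on $(0,1)$, so a rigorous upper bound for $g(x)$ at any rational $x$ is already an upper bound for $g(\zeta)$.

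What the paper's detour buys is economy across the whole argument. The isolation of the same root $\rho$ feeds the complex saddle through \eqref{eq:Vieta-complex-root} and the moduli used for $s$. It also matches the description $r(p)=F_p(y_+(p))$ used in Section~\ref{sec:local-optimality}.

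One caution: your $10^{-34}$ accuracy target is a guess. Nothing tells you in advance how close $r$ sits to the endpoints of \eqref{eq:r-enclosure}. You should raise the precision until your computed interval falls strictly inside, which your method handles without change.
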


\subsection{The stationary cubic and a controlled arc}

Put
\begin{equation}\label{eq:normalized-complex-phase}
 \alpha=\frac{1857}{5570},\qquad
 \beta=\frac{3714}{5570},\qquad
 F(y)=\alpha\log|y|+\beta\log|y^2+6y+25|-\log|25-y|.
\end{equation}
On any local choice of logarithms, the corresponding holomorphic phase
has derivative
\[
 \frac{\alpha}{y}+\beta\frac{2y+6}{y^2+6y+25}
 +\frac1{25-y}.
\]
More precisely,
\[
 -5570\,y(y^2+6y+25)(25-y)
 \left\{\frac{\alpha}{y}
 +\beta\frac{2y+6}{y^2+6y+25}+\frac1{25-y}\right\}=p(y),
\]
where the stationary cubic is
\begin{equation}\label{eq:stationary-cubic}
 p(y)=3715y^3-232119y^2-928475y-1160625.
\end{equation}
Its discriminant is
\(-16947864892329506560000<0\).  Thus \(p\) has one real root
\(\rho\) and a conjugate pair; write the upper root as
\(y_c=U+iV\), \(V>0\).  The exact isolations in the appendix give, for
orientation,
\[
 \begin{aligned}
 \rho&=66.321017380598567\ldots,\\
 y_c&=-1.919728071187574\ldots
      +1.012573903136436\ldots i.
 \end{aligned}
\]

Let \(q=-3+4i\), write \(q/y_c=X+iY\), and define
\begin{equation}\label{eq:arc-parameters}
 \lambda_*=\frac1X,\qquad
 \eta_*=-\frac{Y}{X-1},\qquad
 \gamma_e(\lambda)=\frac{q\lambda}{1-ie(1-\lambda)}
 \quad(0\leq\lambda\leq1).
\end{equation}
Equivalently,
\begin{equation}\label{eq:arc-parameters-UV}
 \lambda_*=\frac{U^2+V^2}{-3U+4V},\qquad
 \eta_*=\frac{4U+3V}{U^2+V^2+3U-4V}.
\end{equation}
Then
\begin{equation}\label{eq:arc-through-saddle}
 \frac q{\gamma_{\eta_*}(\lambda_*)}
 =\frac1{\lambda_*}
  -i\eta_*\left(\frac1{\lambda_*}-1\right)
 =X+iY=\frac q{y_c},
\end{equation}
so the arc passes through \(y_c\).  Exact rational arithmetic gives
\begin{align}
 0.9102483223156522812&<\eta_*<
 0.9102483223156522813,\label{eq:eta-short-isolation}\\
 0.48021524689625610102&<\lambda_*<
 0.48021524689625610103.\label{eq:lambda-short-isolation}
\end{align}

\begin{proposition}[Unique phase maximum]\label{prop:unique-phase-maximum}
The function \(F(\gamma_{\eta_*}(\lambda))\) has exactly one critical
point on \(0<\lambda<1\), namely \(\lambda_*\), and this point is its
unique maximum.  If
\begin{equation}\label{eq:s-definition}
 s=F(y_c),
\end{equation}
then
\begin{equation}\label{eq:s-enclosure}
 -1.174543941310025108442583231<s<
 -1.174543941310025108442583192.
\end{equation}
\end{proposition}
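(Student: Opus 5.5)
The plan is to reduce everything to a one-variable problem along the arc. Write
\[
 g(\lambda)=F(\gamma_{\eta_*}(\lambda)),\qquad 0<\lambda<1,
\]
and recall that \(F\) is the real part of a holomorphic phase \(\Psi\) on the slit plane away from \(0\), the roots of \(y^2+6y+25\), and \(25\). By the chain rule,
\[
 g'(\lambda)=\operatorname{Re}\bigl(\Psi'(\gamma(\lambda))\,\gamma'(\lambda)\bigr).
\]
After multiplying by the sign-definite factor
\[
 \bigl|y(y^2+6y+25)(25-y)\bigr|^2\cdot\bigl|1-i\eta_*(1-\lambda)\bigr|^{4},
\]
this becomes a polynomial in \(\lambda\) with real coefficients. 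Its coefficients are polynomial in \(\eta_*\), and \(\eta_*\) is algebraic, since it is a rational function of \(U\) and \(V\), and \(y_c=U+iV\) is a root of the cubic \eqref{eq:stationary-cubic}. Concretely, the expression is
\[
 \operatorname{Re}\Bigl(-\tfrac{1}{5570}\,p(\gamma)\,\overline{\gamma(\gamma^2+6\gamma+25)(25-\gamma)}\,\gamma'(\lambda)\Bigr),
\]
up to the positive normalizing factor, where
\[
 \gamma'=\frac{q(1-i\eta_*)}{(1-i\eta_*(1-\lambda))^2}.
\]
This is an explicit numerator polynomial \(N(\lambda)\) of modest degree, about 8 after clearing.

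First, I verify that \(\gamma_{\eta_*}\) stays away from the singularities for \(0<\lambda<1\). The arc starts at \(0\), ends at \(q=-3+4i\), and meets \(y^2+6y+25\) only at the root \(-3+4i\), which it hits at \(\lambda=1\). So \(F\) is smooth on the open interval, \(g\to-\infty\) at \(\lambda\downarrow0\) since \(\alpha>0\), and \(g\) tends to \(-\infty\) at \(\lambda\uparrow1\) since \(\beta>0\). Continuity then gives at least one interior maximum.

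Second, \(\lambda_*\) is a root of \(N\), because \(\gamma(\lambda_*)=y_c\) by \eqref{eq:arc-through-saddle} and \(p(y_c)=0\). I then show that \(N\) has no other root in \((0,1)\). To do this I replace \(\eta_*\) by the rational interval \eqref{eq:eta-short-isolation} and divide out the factor \((\lambda-\lambda_*)\) approximately. It is cleaner to treat this as an interval polynomial and apply Sturm sequences, or Descartes' rule after the Möbius map \(\lambda\mapsto1/(1+w)\), on \([0,1]\) minus a small neighborhood of \(\lambda_*\). That shows nonvanishing there. Inside the neighborhood I show \(N'\neq0\), using \eqref{eq:lambda-short-isolation}. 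Exactness is maintained with interval arithmetic over \(\mathbb Q\), in the style of the appendix.

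This root exclusion is the main obstacle, because \(\eta_*\) is only known to an interval. I would first try the exact algebraic route: reduce \(N\) modulo the minimal polynomial of \(\eta_*\), which is obtained by a resultant with the cubic. If that is unwieldy, I fall back to the interval Sturm approach.

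Since there is a unique critical point and \(g\to-\infty\) at both ends, \(\lambda_*\) is the unique maximum. Finally,
\[
 s=F(y_c)=\alpha\log|y_c|+\beta\log|y_c^2+6y_c+25|-\log|25-y_c|,
\]
and I evaluate this from the rational isolation of \(U\) and \(V\). I write each term as \(\tfrac12\log\) of a positive quantity bounded by rationals, with logarithms computed by the atanh series with explicit tail bounds as in \eqref{eq:finite-log-error}. Propagating the errors gives \eqref{eq:s-enclosure}.
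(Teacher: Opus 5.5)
Your outline is correct and has the same skeleton as the paper: a polynomial numerator of \(g'\) along the arc, the saddle identity \eqref{eq:arc-through-saddle} supplying the root \(\lambda_*\), \(g\to-\infty\) at both ends, and atanh-series logarithms for \(s\). The decisive root-exclusion step, however, is done differently.

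The paper uses \(q^2+6q+25=0\) to split off the factor \(1-\lambda\), clears the positive factor \(\lambda(1-\lambda)DHJ\) of Appendix~\ref{subsec:exact-phase-derivative}, and obtains a sextic \(Q_e\) whose coefficients are polynomials in a free parameter \(e\). After \(\lambda=u/(1+u)\) it proves the sign pattern \((+,+,+,+,-,-,-)\) uniformly for \(9/10<e<1\). One Descartes count then gives at most one root counted with multiplicity. This has several consequences:
only the crude bound \(9/10<\eta_*<1\) enters;
no neighborhood of \(\lambda_*\) needs to be excised;
simplicity of \(\lambda_*\) is automatic;
and the same argument carries over verbatim to nearby parameters in Section~\ref{sec:local-optimality}.

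Your interval subdivision, with a separate bound on \(N'\) near \(\lambda_*\), is more robust in principle, since it would survive a coefficient pattern with several sign changes. Here it is heavier than needed. Your own option of Descartes after \(\lambda=1/(1+w)\), applied once on all of \((0,1)\), already sees the reversed pattern with a single sign change.

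If you carry it out, fix two slips.
First, the clearing factor must be \(|y(y^2+6y+25)(25-y)|^2\,|1-i\eta_*(1-\lambda)|^{10}\), not exponent \(4\).
Second, the resulting degree-eight \(N\) is a positive constant times \(\lambda(1-\lambda)Q_{\eta_*}(\lambda)\). Its endpoint zeros must be divided out before any Sturm count on closed subintervals.

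For \(s\), the paper writes \(m_y,m_A,m_J\) as monotone rational functions of the single real root \(\rho\), via the Vieta and resultant identities \eqref{eq:my-resultant-value}--\eqref{eq:mJ-resultant-value}. This makes the \(10^{-26}\)-level enclosure cheap. Evaluating directly from \(U,V\) also works. But naive interval arithmetic with the printed isolation \eqref{eq:UV-isolation} gives an \(s\)-interval of width of order \(10^{-25}\), wider than \eqref{eq:s-enclosure}. You would first need to isolate \(\rho\), and hence \(U\) and \(V\), more finely.
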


\begin{proof}
Appendix~\ref{subsec:exact-phase-derivative} derives the exact degree-six
numerator of the phase derivative.  After the bijection
\(\lambda=u/(1+u)\), its coefficient signs are
\[
 (+,+,+,+,-,-,-).
\]
The positive factors that were cleared are also given there with strict
lower bounds.  Descartes' rule of signs therefore permits at most one
root \(u>0\), counted with multiplicity.  The saddle identity
\eqref{eq:arc-through-saddle} supplies the root
\(u_*=\lambda_*/(1-\lambda_*)>0\).  It is consequently the unique root
and is simple.

The endpoint terms are equally explicit.  The identities
\eqref{eq:arc-factorizations} below show that all bounded terms remain
bounded and that
\[
 F(\gamma_{\eta_*}(\lambda))
 =\alpha\log\lambda+O(1)\quad(\lambda\downarrow0),
\]
whereas
\[
 F(\gamma_{\eta_*}(\lambda))
 =\beta\log(1-\lambda)+O(1)\quad(\lambda\uparrow1).
\]
Thus the phase tends to \(-\infty\) at both endpoints.  Its only
interior critical point is therefore its unique maximum.  Finally,
Appendix~\ref{subsec:saddle-value-enclosure} proves the strict enclosure
\eqref{eq:s-enclosure} from resultants and finite rational logarithm
sums.
\end{proof}

\subsection{Deformation in the shifted variable and the integral bound}

We now justify carefully that the preceding \(y\)-arc controls the
original integral.  The letter \(t\) in this subsection is the shifted
integration variable of \eqref{eq:rational-function}; the independent
arc parameter is \(\lambda\), and \(y=t^2\).

For \(0<\lambda\leq1\), direct calculation gives
\begin{equation}\label{eq:arc-geometry}
 \Im\gamma_{\eta_*}(\lambda)
 =\frac{\lambda\{4-3\eta_*(1-\lambda)\}}
 {1+\eta_*^2(1-\lambda)^2}>0,
 \qquad
 |\gamma_{\eta_*}(\lambda)|
 =\frac{5\lambda}{\sqrt{1+\eta_*^2(1-\lambda)^2}}\leq5.
\end{equation}
Take the principal square root on the upper half-plane and define the
oriented lifts
\begin{equation}\label{eq:square-root-lifts}
 \tau_+(\lambda)=-\sqrt{\gamma_{\eta_*}(\lambda)},
 \quad \tau_+(0)=0,
 \qquad
 \tau_-(\lambda)=\overline{\tau_+(\lambda)}.
\end{equation}
Because \(\sqrt{-3+4i}=1+2i\),
\[
 \tau_+(1)=-1-2i,\qquad \tau_-(1)=-1+2i.
\]
The new \(t\)-contour is the reverse of \(\tau_+\), from
\(-1-2i\) to \(0\), followed by \(\tau_-\), from \(0\) to
\(-1+2i\).  Moreover,
\begin{equation}\label{eq:lift-radius}
 |\tau_\pm(\lambda)|
 =\sqrt{|\gamma_{\eta_*}(\lambda)|}\leq\sqrt5<5.
\end{equation}
The original vertical segment also lies in \(|t|\leq\sqrt5\).  The only
poles of \(R_n(t)\) are \(t=\pm5\), while \(t=0\) is a zero.  Hence both
paths lie in the simply connected disk \(|t|<5\), where \(R_n\) is
holomorphic, and Cauchy's theorem deforms the original segment into the
two oriented lifts.  Notice that the pole avoidance uses
\(|\sqrt\gamma|\leq\sqrt5<5\), not merely \(|\gamma|\leq5\).

The lifted curves are rectifiable at their common endpoint.  Indeed,
\begin{equation}\label{eq:gamma-derivative-origin}
 \gamma_e'(\lambda)
 =\frac{q(1-ie)}{\{1-ie(1-\lambda)\}^2},
\end{equation}
so, with \(e=\eta_*\),
\(\gamma_e(\lambda)=C\lambda+O(\lambda^2)\) with \(C\ne0\), and
\[
 |\tau_+'(\lambda)|
 =\frac{|\gamma_e'(\lambda)|}{2\sqrt{|\gamma_e(\lambda)|}}
 =O(\lambda^{-1/2})
 \qquad(\lambda\downarrow0).
\]
This is integrable; away from \(0\) the lifts are smooth.

\begin{lemma}[Elementary contour estimate]\label{lem:elementary-contour-estimate}
Let \(\tau:[0,1]\to\mathbb C\) be rectifiable, let \(A\geq0\) satisfy
\(\int_0^1A(\lambda)|\tau'(\lambda)|\,d\lambda<\infty\), and let
\(\phi\) be real-valued on the curve with \(\phi\leq s_0\).  If
\[
 |f_n(\tau(\lambda))|\leq
 A(\lambda)e^{Nn\phi(\tau(\lambda))},
\]
then
\[
 \left|\int_\tau f_n(t)\,dt\right|
 \leq e^{Nns_0}\int_0^1
 A(\lambda)|\tau'(\lambda)|\,d\lambda.
\]
Consequently the logarithmic limsup divided by \(Nn\) is at most
\(s_0\), with the convention \(\log0=-\infty\).
\end{lemma}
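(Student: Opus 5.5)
The plan is the standard length-times-maximum estimate. I first bound the modulus of the integral by the integral of the modulus along the parametrized curve, then insert the pointwise hypothesis, then pull out the uniform bound on the exponential, and finally take logarithms.

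\begin{enumerate}
\item Since \(\tau\) is rectifiable and the integrand is controlled on it, write
\[
 \int_\tau f_n(t)\,dt=\int_0^1 f_n(\tau(\lambda))\tau'(\lambda)\,d\lambda .
\]
This parametrized form makes sense as a Lebesgue integral wherever \(\tau\) is absolutely continuous. For the lifts \(\tau_\pm\) above, \(\tau\) is smooth on \((0,1]\) with \(|\tau'|=O(\lambda^{-1/2})\). If one wants to be careful at the endpoint, integrate over \([\delta,1]\) and let \(\delta\downarrow0\). Dominated convergence applies because \(A|\tau'|\in L^1\) dominates the integrand after the bound in the next step.

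\item Move the absolute value inside and apply the hypothesis pointwise:
\[
 |f_n(\tau(\lambda))\tau'(\lambda)|\le A(\lambda)e^{Nn\phi(\tau(\lambda))}|\tau'(\lambda)| .
\]
Because \(N,n>0\) and \(\phi\le s_0\) on the curve, the exponential factor is at most \(e^{Nns_0}\).

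\item Integrating over \([0,1]\) gives the displayed inequality. The constant \(K=\int_0^1A|\tau'|\,d\lambda\) is finite and does not depend on \(n\).

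\item For the consequence, note that \(\log|\int_\tau f_n|\le Nns_0+\log K\) whenever \(K>0\). Dividing by \(Nn\) and taking the limsup, the term \(\log K/(Nn)\) tends to \(0\), so the limsup is at most \(s_0\).

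If \(K=0\), or the integral vanishes, the logarithm is \(-\infty\) under the stated convention, and the bound holds trivially.
\end{enumerate}

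The only point needing attention is the endpoint where \(\tau'\) may be unbounded. Integrability of \(A|\tau'|\) is exactly the hypothesis that handles it, through the truncation-and-limit argument in step 1. Everything else is routine.
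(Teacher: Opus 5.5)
Your proposal is correct and is the same argument as the paper's: the triangle inequality for the parametrized contour integral, followed by the uniform bound $\phi\le s_0$, with $\int_0^1 A(\lambda)|\tau'(\lambda)|\,d\lambda$ finite and independent of $n$. The endpoint truncation with dominated convergence and the separate treatment of a vanishing integral only spell out details that the paper's one-line proof leaves implicit.
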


\begin{proof}
This is the triangle inequality for a parametrized contour, followed by
\(\phi\leq s_0\).  The remaining factor is finite and independent of
\(n\).
\end{proof}

On the upper lift, the defining relation \(y=\tau_+(\lambda)^2\) gives
the exact modulus identity
\begin{equation}\label{eq:fixed-amplitude-identity}
 |R_n(\tau_+(\lambda))|
 =\frac5{|25-\gamma_{\eta_*}(\lambda)|}
 \exp\{5570nF(\gamma_{\eta_*}(\lambda))\}.
\end{equation}
The conjugate lift has the same modulus.  Also
\(|25-\gamma_{\eta_*}(\lambda)|\geq20\).  The deformation,
Lemma~\ref{lem:elementary-contour-estimate},
Proposition~\ref{prop:unique-phase-maximum}, and conjugacy therefore give
\begin{equation}\label{eq:fixed-amplitude-bound}
 \begin{split}
 |J_n|
 &\leq10e^{5570ns}\int_0^1
 \frac{|\tau_+'(\lambda)|}
 {|25-\gamma_{\eta_*}(\lambda)|}\,d\lambda\\
 &=:C_0e^{5570ns},
 \end{split}
\end{equation}
where \(C_0<\infty\) and is independent of \(n\).  We have proved:

\begin{proposition}[Integral rate]\label{prop:integral-rate}
With \(s\) as in \eqref{eq:s-definition},
\begin{equation}\label{eq:integral-limsup}
 \limsup_{n\to\infty}\frac{\log|J_n|}{5570n}
 \leq s,\qquad
 s\approx-1.1745439413100251084425832.
\end{equation}
If a particular \(J_n\) vanishes, its logarithm is interpreted as
\(-\infty\), which only strengthens the inequality.
\end{proposition}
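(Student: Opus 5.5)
The plan is to bound $J_n$ directly along the deformed contour built from the arc $\gamma_{\eta_*}$, and to reduce the exponential rate to the phase value $s=F(y_c)$ supplied by Proposition~\ref{prop:unique-phase-maximum}. There are three ingredients: a justified contour deformation, an exact modulus identity on the new contour, and Lemma~\ref{lem:elementary-contour-estimate}.

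\emph{Deformation.} First I check from \eqref{eq:arc-geometry} that $\gamma_{\eta_*}(\lambda)$ lies in the closed upper half-plane and satisfies $|\gamma_{\eta_*}|\le5$. Then the principal square root gives the lifts $\tau_\pm$ of \eqref{eq:square-root-lifts}. Their endpoints are $\mp(1+2i)$-type points matching $-1\mp2i$, because $\sqrt{-3+4i}=1+2i$, and both lifts lie in $|t|\le\sqrt5<5$. The original vertical segment also lies in this disk. Since $R_n$ is holomorphic on $|t|<5$ (its only poles are $\pm5$), Cauchy's theorem replaces the segment by the reversed $\tau_+$ followed by $\tau_-$. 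Rectifiability near $\lambda=0$ follows from \eqref{eq:gamma-derivative-origin}, which gives $|\tau_\pm'|=O(\lambda^{-1/2})$.

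\emph{Modulus identity.} Write $y=t^2$. Then
\[
|t^{2an}|=|y|^{an},\qquad t^4+6t^2+25=y^2+6y+25,\qquad 25-t^2=25-y .
\]
With $\alpha=a/c$ and $\beta=b/c$ as in \eqref{eq:normalized-complex-phase}, this gives exactly
\[
|R_n(\tau_\pm(\lambda))|=\frac{5}{|25-\gamma_{\eta_*}(\lambda)|}\exp\{5570n\,F(\gamma_{\eta_*}(\lambda))\}.
\]
For the lower lift I use that $R_n$ has real coefficients, so conjugation preserves the modulus. The amplitude is bounded because $|25-\gamma|\ge20$, so it is at most $1/4$. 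Hence the amplitude factor is integrable against $|\tau_\pm'|$.

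\emph{Rate.} By Proposition~\ref{prop:unique-phase-maximum}, the function $\lambda\mapsto F(\gamma_{\eta_*}(\lambda))$ on $(0,1)$ tends to $-\infty$ at both ends and has its unique maximum at $\lambda_*$. By \eqref{eq:arc-through-saddle} we have $\gamma_{\eta_*}(\lambda_*)=y_c$, so $F\le F(y_c)=s$ along the whole arc. At $\lambda=0$ the integrand vanishes, which is harmless. Lemma~\ref{lem:elementary-contour-estimate}, applied with $N=5570$, $\phi=F\circ\gamma_{\eta_*}$ and $s_0=s$, on each lift, gives $|J_n|\le C_0e^{5570ns}$ with $C_0$ independent of $n$. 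Taking logarithms, dividing by $5570n$ and passing to the limsup yields \eqref{eq:integral-limsup}. If some $J_n=0$, the convention $\log0=-\infty$ only helps. The numerical value of $s$ is the enclosure \eqref{eq:s-enclosure}.

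The only genuinely delicate input is that the maximum of the phase along the chosen arc equals the saddle value $F(y_c)$. This is exactly what Proposition~\ref{prop:unique-phase-maximum} provides, via the Descartes sign count in the appendix. Given that result, the remaining risk is bookkeeping, and the point to verify carefully is that the lifts avoid the poles. That requires $\sqrt5<5$ in the $t$-plane, not merely $|y|\le5$.
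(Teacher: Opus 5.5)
Your proposal is correct and follows the paper's argument essentially step for step. It uses the same square-root lifts of $\gamma_{\eta_*}$ kept in the pole-free disk $|t|\le\sqrt5<5$, the same exact modulus identity with amplitude $5/|25-\gamma_{\eta_*}|\le 1/4$, and Lemma~\ref{lem:elementary-contour-estimate} together with Proposition~\ref{prop:unique-phase-maximum} to obtain $|J_n|\le C_0e^{5570ns}$.
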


\section{From integer linear forms to the irrationality measure}
\label{sec:measure}

We now make the final Diophantine step.  We give the one-number form of
Hata's index-selection argument \cite{hata-1993} because a proof that
uses only the first index at a prescribed scale misses the possibility
that its associated integer vanishes.  The second case below is what
closes that gap.

\subsection{The one-number Hata lemma}

\begin{lemma}[Hata's index-selection lemma]\label{lem:hata-one-number}
Let \(\theta\) be irrational.  Suppose that \(U_n,V_n\in\mathbb Z\),
that \(V_n\ne0\) for every sufficiently large \(n\), and put
\[
 \Lambda_n=U_n+V_n\theta.
\]
If there are \(\sigma>0\) and \(\tau>0\) such that
\begin{equation}\label{eq:hata-hypotheses}
 \lim_{n\to\infty}\frac1n\log|V_n|=\sigma,
 \qquad
 \limsup_{n\to\infty}\frac1n\log|\Lambda_n|\leq-\tau,
\end{equation}
then
\begin{equation}\label{eq:hata-conclusion}
 \mu(\theta)\leq1+\frac{\sigma}{\tau}.
\end{equation}
\end{lemma}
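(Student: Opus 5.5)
Fix \(\varepsilon>0\) and a rational approximation \(p/q\) with \(q\) large. The plan is to show that
\[
 |\theta-p/q|\geq q^{-(1+\sigma/\tau)-\varepsilon'}
\]
where \(\varepsilon'\to0\) as \(\varepsilon\to0\). By \eqref{eq:hata-hypotheses}, for all \(n\geq n_0(\varepsilon)\) we have
\[
 e^{(\sigma-\varepsilon)n}\leq|V_n|\leq e^{(\sigma+\varepsilon)n},
 \qquad
 |\Lambda_n|\leq e^{-(\tau-\varepsilon)n},
\]
and also \(V_n\neq0\). The comparison rests on the identity
\[
 qU_n+pV_n=q\Lambda_n-V_n(q\theta-p).
\]
The left side is an integer. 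Whenever it is nonzero, it has absolute value at least \(1\), and then
\[
 |V_n|\,|q\theta-p|\geq1-q|\Lambda_n|.
\]

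Choose \(n\) as the least integer \(\geq n_0\) with
\[
 e^{-(\tau-\varepsilon)n}\leq 1/(2q).
\]
This gives \(n=\log(2q)/(\tau-\varepsilon)+O(1)\), and \(q|\Lambda_n|\leq1/2\). There are two cases.

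\emph{Case 1: \(qU_n+pV_n\neq0\).} Then
\[
 |q\theta-p|\geq 1/(2|V_n|)\geq \tfrac12 e^{-(\sigma+\varepsilon)n}.
\]
Substituting the value of \(n\) gives
\[
 |\theta-p/q|\gg q^{-1-(\sigma+\varepsilon)/(\tau-\varepsilon)}.
\]

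\emph{Case 2: \(qU_n+pV_n=0\).} This means \(p/q=-U_n/V_n\). I would then look at index \(n+1\) and handle the vanishing there. Since \(\theta\) is irrational, \(V_{n}\theta+U_n=\Lambda_n\neq0\) unless \(\Lambda_n=0\), which is impossible because \(V_n\neq0\). Hence \(\Lambda_n\ne0\), and
\[
 |q\theta-p|=q|\Lambda_n|/|V_n|.
\]
Here is the cleaner route. In Case 2, \(q\) divides \(V_n\) up to the common factor, so \(q\leq|V_n|\). More usefully, the identity gives \(|q\theta-p|=q|\Lambda_n|/|V_n|\) with \(\Lambda_n\ne0\), but this needs a lower bound on \(|\Lambda_n|\), which we do not have. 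So instead we switch to \(n+1\). The integers \(qU_{n+1}+pV_{n+1}\) and \(qU_n+pV_n\) cannot both vanish if the vectors \((U_n,V_n)\) and \((U_{n+1},V_{n+1})\) are non-proportional, but proportionality is not assumed either.

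The genuinely robust fix is to use the minimal index \(m\geq n\) at which \(qU_m+pV_m\neq0\). Suppose \(qU_k+pV_k=0\) for all \(k\geq n\). Then \(U_k/V_k=-p/q\) for all such \(k\), so
\[
 \Lambda_k=V_k(\theta-p/q).
\]
This gives \(|\Lambda_k|\geq|\theta-p/q|\) since \(|V_k|\geq1\), which contradicts \(\Lambda_k\to0\). So such an \(m\) exists. At \(m\), the condition \(m\geq n\) still gives \(q|\Lambda_m|\leq1/2\), while
\[
 |\Lambda_m|=|V_m|\,|\theta-p/q|
\]
holds if \(m=n\) was in Case 2; more generally, at the last zero index \(m-1\) we have
\[
 |V_{m-1}|\,|\theta-p/q|=|\Lambda_{m-1}|.
\]

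The main obstacle is bounding \(m\) above. My approach is to use Case 2 at \(k=m-1\) directly:
\[
 |\theta-p/q|=|\Lambda_{m-1}|/|V_{m-1}|.
\]
This is useless without a lower bound, so instead I would use Case 1 at \(m\):
\[
 |q\theta-p|\geq 1/(2|V_m|)\geq\tfrac12 e^{-(\sigma+\varepsilon)m}.
\]
To control \(m\), use \(|V_{m-1}|\,|\theta-p/q|=|\Lambda_{m-1}|\leq e^{-(\tau-\varepsilon)(m-1)}\) together with the trivial bound. Then either \(m\) is close to \(n\), giving the claim, or \(|\theta-p/q|\) is already tiny while \(p/q=-U_{m-1}/V_{m-1}\) has \(q\leq|V_{m-1}|\). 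In that situation the relation forces
\[
 |\theta-p/q|\leq e^{-(\tau-\varepsilon)(m-1)}\,e^{-(\sigma-\varepsilon)(m-1)}.
\]
Comparing with the Case 1 lower bound at \(m\) gives
\[
 e^{-(\sigma+\varepsilon)m}\lesssim q\,e^{-(\tau+\sigma-2\varepsilon)(m-1)},
\]
which bounds \(m\leq n\cdot(1+O(\varepsilon))+O(1)\) using \(\log q\approx\tau n\).

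Substituting this into the Case 1 estimate at \(m\) yields
\[
 |\theta-p/q|\geq q^{-1-\sigma/\tau-O(\varepsilon)}.
\]
Letting \(\varepsilon\to0\) gives \eqref{eq:hata-conclusion}. I expect verifying this bound on \(m\) to be the delicate step. It uses that the limit for \(|V_n|\) exists, not just a limsup, which is why the two-sided hypothesis on \(V_n\) appears in \eqref{eq:hata-hypotheses}.
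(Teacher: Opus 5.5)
Your proof is correct and follows essentially the paper's argument. Both proofs select the first index at scale $q$, where $q|\Lambda_n|\le\frac12$. If $qU_n+pV_n=0$ there, both pass to the first later index $M$ with $qU_M+pV_M\ne0$. Both then use $qU_{M-1}+pV_{M-1}=0$ and the lower bound on $|V_{M-1}|$ to get $|q\theta-p|\le q\,e^{-(\sigma+\tau-2\varepsilon)(M-1)}$, which gives the same exponent $(\sigma+\varepsilon)/(\tau-3\varepsilon)$.

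The only difference is cosmetic. You eliminate $|q\theta-p|$ and bound $M$ directly by $\log(2q)/(\tau-3\varepsilon)+O(1)$, while the paper bounds $e^M$ by $(q/|\Delta|)^{1/\gamma}$ and then solves for $|\Delta|$. Your aside that $q\le|V_n|$ in Case~2 is never used and is unjustified unless $p/q$ is in lowest terms, so delete it.
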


\begin{proof}
Because \(V_n\ne0\) eventually and \(\theta\notin\mathbb Q\), the form
\(\Lambda_n\) is also nonzero at every sufficiently large index.  Thus
the logarithms in \eqref{eq:hata-hypotheses} are defined after finitely
many terms have been discarded.

Fix \(\varepsilon>0\).  Choose \(\delta\) so that
\begin{equation}\label{eq:hata-delta-choice}
 0<\delta<\min\left\{\frac\sigma2,\frac\tau6\right\},
 \qquad
 \kappa:=\frac{\sigma+\delta}{\tau-3\delta}
 <\frac\sigma\tau+\frac\varepsilon2.
\end{equation}
This is possible by continuity at \(\delta=0\).  The ordinary limit and
the limsup in \eqref{eq:hata-hypotheses} give an index \(n_0\) such that,
for every \(n\geq n_0\),
\begin{equation}\label{eq:hata-exponential-bounds}
 e^{(\sigma-\delta)n}\leq|V_n|
 \leq e^{(\sigma+\delta)n},
 \qquad
 |\Lambda_n|\leq e^{-(\tau-\delta)n}.
\end{equation}

Let \(p\in\mathbb Z\) and \(q\geq1\), and write
\begin{equation}\label{eq:hata-determinant-identity}
 \Delta=q\theta-p\ne0,
 \qquad A_n=qU_n+pV_n\in\mathbb Z,
 \qquad q\Lambda_n=A_n+V_n\Delta.
\end{equation}
There are infinitely many indices \(n\geq n_0\) for which \(A_n\ne0\).
Indeed, if \(A_n=0\) eventually, then \eqref{eq:hata-determinant-identity}
and \eqref{eq:hata-exponential-bounds} would imply
\[
 0<|\Delta|=q\frac{|\Lambda_n|}{|V_n|}
 \leq q e^{-(\sigma+\tau-2\delta)n}\longrightarrow0,
\]
which is impossible.

Select the first index at the scale determined by \(q\):
\begin{equation}\label{eq:hata-first-index}
 N=N(q):=\min\{n\geq1:2q<e^{(\tau-\delta)n}\}.
\end{equation}
For all sufficiently large \(q\), one has \(N\geq n_0\).  Minimality
of \(N\) gives
\begin{equation}\label{eq:hata-first-index-size}
 e^{(\tau-\delta)(N-1)}\leq2q<e^{(\tau-\delta)N},
 \qquad
 e^N\leq e(2q)^{1/(\tau-\delta)}.
\end{equation}
Moreover, for every \(n\geq N\),
\begin{equation}\label{eq:hata-small-form-at-scale}
 q|\Lambda_n|\leq q e^{-(\tau-\delta)n}<\frac12.
\end{equation}

Suppose first that \(A_N\ne0\).  Since it is an integer,
\eqref{eq:hata-determinant-identity} and
\eqref{eq:hata-small-form-at-scale} yield
\[
 |V_N|\,|\Delta|=|q\Lambda_N-A_N|>\frac12.
\]
Using \eqref{eq:hata-exponential-bounds} and
\eqref{eq:hata-first-index-size}, we obtain
\begin{equation}\label{eq:hata-case-one}
 |\Delta|>\frac12e^{-(\sigma+\delta)N}
 \geq c_1q^{-\kappa_1},
 \quad
 \kappa_1=\frac{\sigma+\delta}{\tau-\delta}<\kappa,
 \quad
 c_1=2^{-1-\kappa_1}e^{-(\sigma+\delta)}>0.
\end{equation}

It remains to handle the case \(A_N=0\).  By the infinitude just proved,
there is a first index
\[
 M=\min\{m>N:A_m\ne0\}.
\]
Then \(A_{M-1}=0\).  At this preceding index,
\eqref{eq:hata-determinant-identity} and both coefficient estimates in
\eqref{eq:hata-exponential-bounds} give
\begin{equation}\label{eq:hata-zero-predecessor}
 |\Delta|=q\frac{|\Lambda_{M-1}|}{|V_{M-1}|}
 \leq q e^{-(\sigma+\tau-2\delta)(M-1)}.
\end{equation}
Set
\[
 \gamma=\sigma+\tau-2\delta,
 \qquad \rho=\frac{\sigma+\delta}{\gamma}.
\]
The choice \(\delta<\tau/6\) gives
\(\gamma-(\sigma+\delta)=\tau-3\delta>0\), and hence
\(0<\rho<1\).  Rearranging \eqref{eq:hata-zero-predecessor} gives
\begin{equation}\label{eq:hata-second-index-size}
 e^M\leq e\left(\frac q{|\Delta|}\right)^{1/\gamma}.
\end{equation}
At the index \(M\), the integer \(A_M\) is nonzero, so the same argument
as above, now combined with \eqref{eq:hata-second-index-size}, yields
\[
 |\Delta|>\frac12e^{-(\sigma+\delta)M}
 \geq\frac12e^{-(\sigma+\delta)}
 \left(\frac{|\Delta|}{q}\right)^\rho.
\]
Since \(1-\rho>0\), this is equivalent to
\begin{equation}\label{eq:hata-case-two}
 |\Delta|>c_2q^{-\rho/(1-\rho)}=c_2q^{-\kappa},
 \qquad
 c_2=\left(2e^{\sigma+\delta}\right)^{-\gamma/(\tau-3\delta)}>0.
\end{equation}
Here we used
\(\rho/(1-\rho)=(\sigma+\delta)/(\tau-3\delta)=\kappa\).
This second selection is essential: the zero at \(N\) controls the
distance to the next nonzero integer through \(|\Delta|\) itself.

Put \(c=\min\{1,c_1,c_2\}\).  For definiteness, take
\begin{equation}\label{eq:hata-uniform-threshold}
 Q(\varepsilon)=\left\lceil\max\left\{
 2,\ \frac12e^{(\tau-\delta)(n_0-1)},\ c^{-2/\varepsilon}
 \right\}\right\rceil.
\end{equation}
The middle term ensures \(N(q)\geq n_0\).  Thus both cases, uniformly
for every \(p\in\mathbb Z\) and every \(q\geq Q(\varepsilon)\), give
\[
 |q\theta-p|>cq^{-\kappa}
 >q^{-\sigma/\tau-\varepsilon};
\]
the last strict inequality follows from
\(\sigma/\tau+\varepsilon-\kappa>\varepsilon/2\) and the last term in
\eqref{eq:hata-uniform-threshold}.  Division by \(q\) proves
\[
 \left|\theta-\frac pq\right|
 >q^{-1-\sigma/\tau-\varepsilon}.
\]
All constants and the threshold are independent of \(p\).  Since
\(\varepsilon>0\) was arbitrary, the definition of \(\mu(\theta)\)
proves \eqref{eq:hata-conclusion}.
\end{proof}

\begin{remark}\label{rem:hata-signs}
Hata writes the forms as \(q_n\theta-p_n\).  Here
\(q_n=V_n\), \(p_n=-U_n\), so the integer paired with the rational
approximation \(p/q\) is \(qU_n+pV_n\), with the plus sign used in
\eqref{eq:hata-determinant-identity}.  No assumption that this integer
is nonzero at every large index is present in Lemma~\ref{lem:hata-one-number}.
\end{remark}

\subsection{Application to the present forms}

Apply Proposition~\ref{prop:arithmetic-normalization} and write
\begin{equation}\label{eq:normalized-integer-form}
 \Lambda_n=M_nJ_n=U_n+V_n\pi,
 \qquad V_n=M_nv_n.
\end{equation}
The multiplier \(M_n\) is positive.  Equation
\eqref{eq:exact-vn-coefficient}, together with
\eqref{eq:S-strict-positivity}, shows directly that \(v_n\ne0\) for
every \(n\geq1\); hence \(V_n\ne0\).  Since \(\pi\) is irrational,
also \(\Lambda_n\ne0\).

The ordinary limits \eqref{eq:C-definition-value} and
\eqref{eq:r-definition} therefore give
\begin{equation}\label{eq:integer-coefficient-rate}
 \lim_{n\to\infty}\frac{\log|V_n|}{5570n}
 =r+\mathcal C=: \sigma.
\end{equation}
Similarly, Proposition~\ref{prop:integral-rate} gives
\begin{equation}\label{eq:integer-form-rate}
 \limsup_{n\to\infty}\frac{\log|\Lambda_n|}{5570n}
 \leq s+\mathcal C=-\tau,
 \qquad \tau:=-s-\mathcal C.
\end{equation}
Thus the rates in Lemma~\ref{lem:hata-one-number}, whose denominator is
\(n\), are \(5570\sigma\) and \(5570\tau\).  Their common factor cancels:
\begin{equation}\label{eq:pi-measure-formula}
 \mu(\pi)\leq1+\frac{5570\sigma}{5570\tau}
 =1+\frac{r+\mathcal C}{-s-\mathcal C}.
\end{equation}

We finish with strict numerical enclosures.  The finer endpoint
computations behind \eqref{eq:C-enclosure} and \eqref{eq:s-enclosure}
give
\begin{align}
 0.539993819198880114452898006
 &<\mathcal C<
 0.539993819198880114452898061,
 \label{eq:C-enclosure-recalled}\displaybreak[0]\\
 -1.174543941310025108442583231
 &<s<
 -1.174543941310025108442583192.
 \label{eq:s-enclosure-recalled}
\end{align}
For completeness, these are not floating-point assumptions.  For
\(\mathcal C\), insert the rational recurrence bound
\eqref{eq:W-exact-enclosure}, its error
\eqref{eq:aggregate-digamma-error}, and the finite rational sum
\eqref{eq:log-two-series} with remainder \eqref{eq:log-two-error} into
\eqref{eq:C-definition-value}; exact cross-multiplication gives the first
line.  For \(s\), insert the rational modulus intervals
\eqref{eq:modulus-enclosures} into \eqref{eq:s-from-moduli} and apply the
finite sum \eqref{eq:atanh-log-bound} with \(N=120\); respecting the
negative coefficient of \(\log m_J\) gives the second line.  The latter
raw interval has width less than \(3.74\cdot10^{-26}\), as computed in
Appendix~\ref{subsec:saddle-value-enclosure}.

Combining these bounds with \eqref{eq:r-enclosure}, all by rational
addition, gives
\begin{align}
 3.87193780537717945273573050982326
 &<\sigma<
 3.87193780537717945273573056482327,
 \label{eq:sigma-enclosure}\\
 0.634550122111144993989685131
 &<\tau<
 0.634550122111144993989685225.
 \label{eq:tau-enclosure}
\end{align}
In particular,
\[
 \sigma=3.8719378053771794527\ldots,
 \qquad
 \tau=0.6345501221111449939\ldots>0.
\]
Since the quotient is increasing in its numerator and decreasing in its
positive denominator, exact integer cross-multiplication at the outer
endpoints of \eqref{eq:sigma-enclosure}--\eqref{eq:tau-enclosure} gives
\begin{equation}\label{eq:final-bound-enclosure}
 \boxed{
 7.101862832356350795733674505
 <1+\frac\sigma\tau<
 7.101862832356350795733675497.}
\end{equation}
Indeed, if the four rate endpoints are denoted by
\(\sigma_L,\sigma_U,\tau_L,\tau_U\), and the two displayed bound
endpoints by \(B_L,B_U\), the two exact cross-products are
\[
 \begin{aligned}
 \sigma_L-(B_L-1)\tau_U
 &=\frac{2480995118583086739538491}{8\cdot10^{51}}>0,\\
 (B_U-1)\tau_L-\sigma_U
 &=\frac{588480502935973396359935107}{10^{54}}>0.
 \end{aligned}
\]
In particular,
\begin{equation}\label{eq:final-strict-rounding}
 1+\frac\sigma\tau
 =7.1018628323563507957\ldots
 <7.101862832357.
\end{equation}
Together with \eqref{eq:pi-measure-formula}, this proves
Theorem~\ref{thm:main-bound}.

Finally, put \(B_*=1+\sigma/\tau\).  Evaluation of the explicit
Zeilberger--Zudilin rates by the same finite logarithm bounds gives their
unrounded auxiliary value \cite{zeilberger-zudilin-2020}
\[
 B_{\rm ZZ}=7.1032053341370017275057734\ldots .
\]
Consequently
\begin{equation}\label{eq:exact-improvement}
 B_{\rm ZZ}-B_*
 =0.001342501780650931772\ldots,
\end{equation}
which is \(0.0013425017806509318\) when rounded to nineteen decimal
places.  Thus the displayed prefix is
\(0.0013425017806509\ldots\), the gain exceeds
\(0.001342501780\), and its size relative to the previous auxiliary
bound is approximately \(0.01890\%\), as stated in the abstract.

\section{Selection of the exponents and two-dimensional local optimality}
\label{sec:local-optimality}

We now explain both why the old exponent can be improved and why the
new point selected in \eqref{eq:candidate-parameters} is locally optimal
for the resulting two-parameter auxiliary bound.  The proof is not a
numerical sampling of directions.  Its main ingredient is an exact
logarithmic variation formula, uniform on the whole unit diamond, whose
directional coefficient is a finite piecewise-linear function.

\subsection{The parameter-dependent objective}

For real \(X,Y,Z\), put
\begin{equation}\label{eq:local-chi-definition}
 \chi(X,Y,Z)=
 \mathbf 1_{\{\{X+\tfrac12\}+2\{Y\}<\{Z\}\}},
\end{equation}
where \(\{x\}=x-\lfloor x\rfloor\in[0,1)\).  The convention on the
boundary of the strict inequality will never affect an integral.  Define
\begin{align}
 \phi(\alpha,\beta)
 &=\int_0^\infty
   \chi(\alpha t,\beta t,t)\frac{dt}{t^2},
 \label{eq:phi-definition}\displaybreak[0]\\
 \Omega(A,B,C)
 &=\int_0^\infty
   \chi(At,Bt,Ct)\frac{dt}{t^2}.
 \label{eq:Omega-definition}
\end{align}
The integrands vanish on a fixed interval to the right of zero whenever
the parameters lie in a compact subset of the chamber
\eqref{eq:admissible-chamber}, so these improper integrals are well
defined.  The change of variables \(s=ct\) gives
\begin{equation}\label{eq:Omega-phi-scaling}
 \Omega(c\alpha,c\beta,c)=c\phi(\alpha,\beta).
\end{equation}

This continuous notation is exactly the prime saving of
Section~\ref{sec:prime-saving}.  Indeed, for an integral triple
\((a,b,c)\), let \(E(a,b,c)\) be the subset of one period selected by
\eqref{eq:local-chi-definition}, and write it as a disjoint union of
half-open intervals \([\ell,r)\).  Splitting the integral into periods
and integrating \(t^{-2}\) gives
\begin{equation}\label{eq:Omega-equals-omega}
 \begin{split}
 \Omega(a,b,c)
 &=\sum_{k=0}^{\infty}\int_{E(a,b,c)}
       \frac{du}{(k+u)^2}\\
 &=\sum_{[\ell,r)\subset E(a,b,c)}\sum_{k=0}^{\infty}
   \left(\frac1{k+\ell}-\frac1{k+r}\right)
 =\omega(a,b,c).
 \end{split}
\end{equation}
Thus the normalized arithmetic saving is precisely
\(\omega/c=\phi\), not merely a formally similar integral.

For \(p=(\alpha,\beta)\), set
\begin{equation}\label{eq:local-general-phase}
 F_p(y)=\alpha\log|y|+\beta\log|y^2+6y+25|
       -\log|25-y|.
\end{equation}
Clearing the denominators in its stationary equation gives
\begin{equation}\label{eq:parameter-stationary-polynomial}
 \begin{split}
 P(y;p)={}&(\alpha+2\beta-1)y^3
 -(19\alpha+44\beta+6)y^2\\
 &-(125\alpha+150\beta+25)y-625\alpha.
 \end{split}
\end{equation}
In the neighborhood used below, \(P\) has a real root \(y_+(p)>25\)
and an upper-half-plane root \(y_c(p)\).  Define
\begin{equation}\label{eq:parameter-r-s}
 r(p)=F_p(y_+(p)),\qquad s(p)=F_p(y_c(p)).
\end{equation}
The smooth part of the clearing-denominator rate and the full rate are
\begin{align}
 G(p)&=2\alpha+4\beta-2
       -\left(5\beta-\frac52\right)\log2,
 \label{eq:local-G-definition}\\
 \mathcal C(p)&=G(p)-\phi(p).
 \label{eq:local-C-definition}
\end{align}
Indeed, \(2\alpha+4\beta-2=d_0/c\), whereas
\(5\beta-5/2=h/c\).  Consequently the parameter-dependent version of
\eqref{eq:pi-measure-formula} is
\begin{equation}\label{eq:local-B-definition}
 \mathcal B(p)=1+
 \frac{r(p)+\mathcal C(p)}{-s(p)-\mathcal C(p)},
\end{equation}
where the denominator is positive.  We shall prove that this condition,
the saddle construction, and all four strict inequalities in
\eqref{eq:admissible-chamber} hold on one common neighborhood of \(p_*\).

\subsection{Finite endpoint geometry}

Fix positive integers \(a,b,c\), put
\begin{equation}\label{eq:q-fourth-endpoint}
 q=a+2b-c\ne0,
\end{equation}
and keep \(q\) distinct from the degree cutoff
\(d_0=2a+4b-2c=2q\).  For a translation \(z=(\xi,\eta)\), define
\begin{equation}\label{eq:translated-cell-measure}
 \mathfrak m(\xi,\eta)=\int_0^1
 \chi(au+\xi,bu+\eta,cu)\,du.
\end{equation}

\begin{lemma}[Uniform local polyhedrality]\label{lem:uniform-polyhedrality}
There are \(\rho>0\), \(L<\infty\), and a continuous, positively
homogeneous, piecewise-linear function \(D:\mathbb R^2\to\mathbb R\)
such that, for every \(v\) with \(\|v\|_1=1\),
\begin{equation}\label{eq:m-exact-directional}
 \mathfrak m(sv)-\mathfrak m(0)=sD(v)
 \qquad(0\leq s\leq\rho).
\end{equation}
The same \(\rho\) works for all directions.  If
\(G_z(u)=\chi(au+z_1,bu+z_2,cu)\), then, after decreasing \(\rho\) if
necessary,
\begin{equation}\label{eq:Gz-Lipschitz}
 \int_0^1|G_z(u)-G_{z'}(u)|\,du
 \leq L\|z-z'\|_1
\end{equation}
whenever \(z,z'\) lie in the \(\rho\)-ball.  In the same ball one also
has the affine-translation estimate
\begin{equation}\label{eq:Gz-affine-Lipschitz}
 \int_0^1|G_{z+uw}(u)-G_z(u)|\,du
 \leq L\|w\|_1.
\end{equation}
\end{lemma}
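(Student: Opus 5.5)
The plan is to view the set where \(G_z(u)=1\) as a union of polygonal cells in the \((u,z)\)-space, for \(u\in[0,1]\) and \(z\) small. The indicator \(\chi(au+\xi,bu+\eta,cu)\) depends only on the three fractional parts. Each fractional part is an affine function of \((u,\xi,\eta)\) on every region between consecutive hyperplanes of the form
\[
 au+\xi+\tfrac12\in\mathbb Z,\qquad bu+\eta\in\mathbb Z,\qquad cu\in\mathbb Z .
\]
Since \(a,b,c\) are fixed integers and \(u\in[0,1]\), only finitely many such hyperplanes meet \([0,1]\times\{\|z\|_1\leq1\}\). On each resulting polyhedral cell the defining condition
\[
 \{X+\tfrac12\}+2\{Y\}<\{Z\}
\]
becomes one strict linear inequality in \((u,\xi,\eta)\). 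Hence the support of \(G_z\) is a finite union of polytopes in \((u,z)\)-space. Its \(z\)-slice is a finite union of \(u\)-intervals whose endpoints are affine functions of \(z\) on each piece of a finite polyhedral fan decomposition near \(z=0\).

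First step: compute these endpoints at \(z=0\). I will use the same cell analysis as in Proposition~\ref{prop:E-decomposition}, now for general \(a,b,c\). This gives finitely many intervals. Each endpoint is the intersection of the \(u\)-line with one of the breakpoint lines or with a threshold line, such as
\[
 qu+\tfrac12+\xi+2\eta=\text{integer}.
\]
This threshold is where \(q=a+2b-c\) enters, and the hypothesis \(q\neq0\) ensures it is a genuine transversal line in \(u\).

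Second step: perturb. Under a small translation \(z\), each endpoint moves by an amount \(\ell_i(z)\), where \(\ell_i\) is linear in \(z\) with slope \(-1/a\), \(-1/b\), \(0\) or \(-1/q\) times the relevant combination of \(\xi,\eta\). The exceptions are degenerate endpoints, where two of the defining lines coincide at \(z=0\), as at \(j=371\) and \(j=1114\) in Proposition~\ref{prop:E-decomposition}. There the moving endpoint is a \(\min\) or \(\max\) of two linear forms, and this is where the piecewise linearity of \(D\) comes from.

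Choosing \(\rho\) smaller than half the minimal gap between distinct endpoints at \(z=0\) (scaled by the slopes) ensures three things for \(\|z\|_1\leq\rho\):
\begin{enumerate}
 \item no interval is created or destroyed except at coincident endpoints;
 \item no ordering changes among noncoincident endpoints;
 \item every endpoint lies in one linear regime along each ray.
\end{enumerate}
Then \(\mathfrak m(z)=\sum(\text{right}-\text{left})\) is a sum of \(\min\)/\(\max\) of linear forms. Such a sum is positively homogeneous of degree one after subtracting \(\mathfrak m(0)\). This gives \eqref{eq:m-exact-directional} with \(D(v)=\mathfrak m(v)-\mathfrak m(0)\) (rescaled), uniformly in \(v\). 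The main obstacle is exactly this bookkeeping at coincident endpoints, including possible zero-length intervals appearing in some directions. I would handle it by listing, for each coincidence, the two competing lines and checking that the interval length is \(\max(0,\ \text{linear})\). Such a term is still positively homogeneous and piecewise linear near the origin, because \(\rho\) is chosen below all nondegenerate gaps.

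Third step: the Lipschitz bounds. \(\int_0^1|G_z-G_{z'}|\,du\) is at most the sum of the endpoint displacements, because each symmetric difference is a union of intervals between corresponding endpoints. Each displacement is bounded by \(\max(1/a,1/b,1/|q|,\dots)\,\|z-z'\|_1\), and the number of endpoints is finite. This proves \eqref{eq:Gz-Lipschitz} with \(L\) equal to twice the endpoint count times the largest slope.

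For \eqref{eq:Gz-affine-Lipschitz}, \(G_{z+uw}(u)\) is the same indicator with \((a,b)\) replaced by \((a+w_1,b+w_2)\). Its endpoints solve the same linear equations with the slopes perturbed by \(w\). By the implicit function theorem for linear equations with nonzero leading coefficients \(a,b,q\), these endpoints move by \(O(\|w\|_1)\). This gives the affine bound after shrinking \(\rho\) so that the leading coefficients stay bounded away from zero.
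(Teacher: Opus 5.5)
Your proposal is correct and follows essentially the paper's route. In both, every potential endpoint lies in one of the four affine families $A_j,B_j,C_j,Q_j$, and a single radius $\rho$ keeps distinct base positions separated. Ties are then resolved by homogeneous linear comparisons of the velocities, and both Lipschitz bounds come from endpoint displacements; for the affine estimate, the slopes $a+w_1$, $b+w_2$, $q+w_1+2w_2$ stay bounded away from zero.

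Your bookkeeping needs three small changes to match the paper:
\begin{enumerate}
\item Choose $\rho$ below the minimal gap among \emph{all} members of the four families, not only the actual endpoints of $E$ at $z=0$.
\item Work on $\mathbb R/\mathbb Z$, so that arcs born at $u=0\equiv1$ are ordinary ties; the Zeilberger--Zudilin example shows such a birth.
\item Replace the ``two competing lines, $\max(0,\text{linear})$'' recipe by fixing the full order of all germs tied at a point on each cone of the central fan cut out by their pairwise velocity differences. More than two families can meet at one point: at the candidate triple all four meet at $u=1/2$.
\end{enumerate}
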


\begin{proof}
The integrand can change only when a fractional part jumps or when the
strict inequality in \eqref{eq:local-chi-definition} is an equality.
After absorbing the relevant floor values in an integer \(j\), all such
endpoints belong to the four affine families
\begin{equation}\label{eq:four-endpoint-families}
 A_j(z)=\frac{j-1/2-\xi}{a},\quad
 B_j(z)=\frac{j-\eta}{b},\quad
 C_j=\frac jc,\quad
 Q_j(z)=\frac{j-1/2-\xi-2\eta}{q}.
\end{equation}
Only finitely many members meet a fixed neighborhood of \([0,1]\).
It is convenient first to regard \(u\) on the circle
\(\mathbb R/\mathbb Z\).  Choose a cut outside the finite base endpoint
set.  Distinct endpoints at \(z=0\) remain distinct in one fixed ball.
Endpoints tied at \(z=0\) are ordered by the finitely many homogeneous
linear differences of their velocities.  These equality lines form a
finite central fan, and the circular endpoint order is fixed on every
cone of that fan.  Persistent ties may be ordered by the fixed priority
\(A<B<C<Q\), followed by the integer indices; this only orders
zero-length arcs and does not alter their measure.

Between consecutive endpoints the indicator is constant.  Hence on
each cone \(\mathfrak m(z)\) is a sum of lengths of half-open arcs with
affine endpoints and therefore has the form
\(\mathfrak m(0)+\ell_C(z)\), where \(\ell_C\) is linear.  All cones
share the same value at their vertex.  Moving an endpoint by \(h\)
changes the selected set by \(O(|h|)\), so the measure is continuous;
the adjacent linear forms consequently agree on every common wall.
This defines the continuous homogeneous function \(D\) and proves
\eqref{eq:m-exact-directional} on one common ball.

For \eqref{eq:Gz-affine-Lipschitz}, the first, second, and fourth
families acquire the slopes
\[
 a+w_1,\qquad b+w_2,\qquad q+w_1+2w_2
\]
in \(u\), while the third keeps slope \(c\).  In a fixed small ball all
these slopes are bounded away from zero.  Solving the endpoint equations
shows that every corresponding endpoint moves by \(O(\|w\|_1)\),
uniformly in \(z\).  The indicators agree outside the union of the
resulting finitely many short intervals.  If a moving endpoint enters or
leaves \([0,1]\), it lies within \(O(\|w\|_1)\) of \(0\) or \(1\) and
contributes the same order of error; equivalently, one may keep the
circle description and include the cut as a fixed endpoint.  This proves
\eqref{eq:Gz-affine-Lipschitz}.  The same argument for two constant
translations proves \eqref{eq:Gz-Lipschitz}.
\end{proof}

The proof also gives an exact finite formula for \(D\).  Fix a direction
inside one cone and write the sorted germs as
\(e_j(s)=e_j(0)+s\lambda_j(v)\), cyclically after the chosen cut.  If
\(\epsilon_j\in\{0,1\}\) is the indicator on
\([e_j(s),e_{j+1}(s))\), then
\begin{equation}\label{eq:D-endpoint-formula}
 \boxed{D(v)=\sum_j\epsilon_j
       \{\lambda_{j+1}(v)-\lambda_j(v)\}.}
\end{equation}
Every number in this formula is rational for an integral triple; no
finite-difference approximation is involved.

At the candidate triple there are \(14856\) endpoint germs and \(12992\)
distinct circular base positions.  Direct comparison of the six pairs
of denominators in \eqref{eq:four-endpoint-families} gives the minimum
nonzero circular gap
\begin{equation}\label{eq:local-minimum-endpoint-gap}
 \Delta=\frac1{13797510}=\frac1{3714\cdot3715}.
\end{equation}
For the \(\ell^1\)-norm, the largest dual norm of an endpoint gradient is
\[
 M_0=\max\left\{\frac1{1857},\frac1{3714},
                       \frac2{3715}\right\}=\frac1{1857}.
\]
Thus \(\rho=1/60000\), for example, is a common separation radius:
\(2M_0\rho<\Delta/4\).  Coincident germs are handled by the central
fan and do not constrain this estimate.

\subsection{Uniform logarithmic variation of the prime saving}

\begin{lemma}[Uniform logarithmic variation]\label{lem:uniform-log-variation}
Let \(p_0=(a/c,b/c)\).  Uniformly for \(\|v\|_1=1\), as
\(\varepsilon\downarrow0\),
\begin{equation}\label{eq:uniform-log-variation}
 \phi(p_0+\varepsilon v)-\phi(p_0)
 =D(v)\varepsilon\log(1/\varepsilon)+O(\varepsilon).
\end{equation}
Equivalently, uniformly as \(h\to0\), \(h\ne0\),
\begin{equation}\label{eq:uniform-log-variation-h}
 \phi(p_0+h)-\phi(p_0)
 =D(h)\log(1/\|h\|_1)+O(\|h\|_1).
\end{equation}
\end{lemma}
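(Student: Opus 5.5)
The approach is to split $\phi$ into periods, as in \eqref{eq:Omega-equals-omega}, and to recognise the perturbed integrand on the $k$th period as a translated cell of the type controlled by Lemma~\ref{lem:uniform-polyhedrality}. Write $p=p_0+\varepsilon v$. By \eqref{eq:Omega-phi-scaling} it is enough to study
\[
\Omega(a+c\varepsilon v_1,\,b+c\varepsilon v_2,\,c)=c\,\phi(p).
\]
Substitute $t=k+u$ with $k\geq0$ and $u\in[0,1)$. Since $ak$, $bk$, $ck$ are integers, the integrand becomes
\[
\chi\bigl(au+k c\varepsilon v_1+uc\varepsilon v_1,\ bu+kc\varepsilon v_2+uc\varepsilon v_2,\ cu\bigr),
\]
which is exactly $G_{z_k+u w}(u)$ with $z_k=kc\varepsilon v$ and $w=c\varepsilon v$. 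The period sum therefore reads
\[
c\phi(p)=\sum_{k\geq0}\int_0^1 G_{z_k+uw}(u)\,\frac{du}{(k+u)^2}.
\]
The $k=0$ term needs care, because $1/u^2$ is not integrable at $0$. This is handled by the chamber: the integrand vanishes near $u=0$ uniformly for $p$ near $p_0$, since the first endpoint $1/3714$ moves only by $O(\varepsilon)$.

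The sum then splits at the scale $K=\lfloor\rho/(c\varepsilon)\rfloor$, so that $\|z_k\|_1\leq\rho$ exactly for $k\leq K$.

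\emph{Range $k\leq K$.} By \eqref{eq:Gz-affine-Lipschitz}, replacing $G_{z_k+uw}$ by $G_{z_k}$ costs $O(\varepsilon)/(k+1)^2$, and this sums to $O(\varepsilon)$. For the remaining weighted integral $\int_0^1 G_{z_k}(u)(k+u)^{-2}du$, replace the weight by $(k+1/2)^{-2}$. For $k\geq1$ the error is handled by comparing $G_{z_k}$ with $G_0$ via \eqref{eq:Gz-Lipschitz}: the weight difference is $O(k^{-3})$ and $\int|G_{z_k}-G_0|\leq Lkc\varepsilon$, so the error is $O(\varepsilon k^{-2})$, again summing to $O(\varepsilon)$. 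Then \eqref{eq:m-exact-directional} gives
\[
\mathfrak m(z_k)-\mathfrak m(0)=kc\varepsilon D(v).
\]
Summing $kc\varepsilon D(v)(k+1/2)^{-2}$ over $1\leq k\leq K$ yields $c\varepsilon D(v)\log K+O(\varepsilon)=cD(v)\varepsilon\log(1/\varepsilon)+O(\varepsilon)$. The $k=0$ term changes by $O(\varepsilon)$ by the Lipschitz estimates.

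\emph{Range $k>K$.} Both the perturbed and the unperturbed integrands are bounded by $1$, so each tail is $\sum_{k>K}(k+u)^{-2}=O(1/K)=O(\varepsilon)$. Their difference is therefore $O(\varepsilon)$.

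Dividing by $c$ gives \eqref{eq:uniform-log-variation}. All constants come from $\rho$, $L$ and $\max|D|$ on the $\ell^1$ sphere, so the estimate is uniform in $v$. The form \eqref{eq:uniform-log-variation-h} follows by setting $\varepsilon=\|h\|_1$ and $v=h/\|h\|_1$ and using the homogeneity of $D$.

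The main obstacle is the bookkeeping in the window $k\leq K$. The Lipschitz errors are of size $O(k\varepsilon)$ per cell, so they must always be paired with a weight decaying like $k^{-3}$, or else taken relative to the affine slope term, which contributes $O(\varepsilon)$ per cell, rather than left at $O(\varepsilon)$ per cell. In the latter case the sum would pick up a spurious $\varepsilon\log(1/\varepsilon)$ error or worse. Concretely, I would check that the slope correction \eqref{eq:Gz-affine-Lipschitz} is applied with the fixed vector $w=c\varepsilon v$, independent of $k$, so that its total is $\sum O(\varepsilon)/k^2$. I would also check that all translates $z_k+uw$ stay inside the $\rho$-ball, which requires shrinking $K$ by a constant factor.
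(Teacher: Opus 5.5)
Your proposal is correct and follows the paper's proof essentially step by step: the same period decomposition with translations $\delta(k+u)v$, $\delta=c\varepsilon$, the same cutoff $K\asymp\rho/\delta$, the same pairing of the $O(\delta k)$ translation error with the $O(k^{-3})$ weight variation, and the same $O(\delta)$ tail and first-period estimates. The differences are cosmetic. You use the weight $(k+1/2)^{-2}$ where the paper uses $k^{-2}$. For the $k=0$ cell you appeal to the candidate's first endpoint $1/3714$, whereas the paper uses the no-wrap computation $\tfrac12+(q+\delta(v_1+2v_2))t>0$ for small $t$; that computation works for any fixed triple $(a,b,c)$, which is the generality in which the lemma is stated.
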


\begin{proof}
We first perturb \(\Omega\).  Put \(0<\delta<\rho/2\), where \(\rho\)
is the common radius in Lemma~\ref{lem:uniform-polyhedrality}, and let
\(\|v\|_1=1\).  On the period \([k,k+1]\), integrality of \(a,b,c\) and
the substitution \(t=k+u\) make the contribution to the difference
\(\Omega(a+\delta v_1,b+\delta v_2,c)-\Omega(a,b,c)\) equal to
\begin{equation}\label{eq:local-period-block}
 I_k=\int_0^1
 \{G_{\delta(k+u)v}(u)-G_0(u)\}\frac{du}{(k+u)^2}.
\end{equation}
For
\[
 1\leq k\leq K:=\left\lfloor\frac{\rho}{2\delta}\right\rfloor,
\]
both translations needed below lie in the common ball.  Equations
\eqref{eq:Gz-affine-Lipschitz} and \eqref{eq:Gz-Lipschitz} give,
uniformly in \(v,k,\delta\),
\begin{align}
 \int_0^1|G_{\delta(k+u)v}-G_{\delta kv}|\,du&=O(\delta),
 \label{eq:local-block-affine-error}\\
 \int_0^1|G_{\delta kv}-G_0|\,du&=O(\delta k).
 \label{eq:local-block-translation-error}
\end{align}
Since
\[
 \sup_{0\leq u\leq1}|(k+u)^{-2}-k^{-2}|\leq2k^{-3},
\]
these estimates show that
\begin{equation}\label{eq:local-block-replacement}
 I_k=\frac{\mathfrak m(\delta kv)-\mathfrak m(0)}{k^2}
      +O(\delta k^{-2}).
\end{equation}
Exact polyhedrality now gives
\begin{equation}\label{eq:local-harmonic-sum}
 \begin{split}
 \sum_{k=1}^KI_k
 &=\delta D(v)\sum_{k=1}^K\frac1k+O(\delta)\\
 &=\delta D(v)\log(1/\delta)+O(\delta),
 \end{split}
\end{equation}
and every error constant is independent of the direction.  The remaining
tail is bounded by
\[
 \int_K^\infty t^{-2}\,dt=O(\delta).
\]

There is no singular error on \(0<t<1\).  On a sufficiently short fixed
initial interval no fractional part wraps, and the defining inequality
would read
\[
 \frac12+\bigl(q+\delta(v_1+2v_2)\bigr)t<0,
\]
which is uniformly impossible.  On the rest of the first period the
weight is bounded, and the finite-endpoint estimate gives \(O(\delta)\).
Thus
\[
 \Omega(a+\delta v_1,b+\delta v_2,c)-\Omega(a,b,c)
 =\delta D(v)\log(1/\delta)+O(\delta)
\]
uniformly on the unit diamond.  Finally take \(\delta=c\varepsilon\),
divide by \(c\), and use \eqref{eq:Omega-phi-scaling}.  The additional
term \(-\varepsilon D(v)\log c\) is uniformly \(O(\varepsilon)\), since
\(D\) is bounded on the unit diamond.  This proves
\eqref{eq:uniform-log-variation}; positive homogeneity gives
\eqref{eq:uniform-log-variation-h}.
\end{proof}

\subsection{Why the old exponent improves}

At the Zeilberger--Zudilin point take \((a,b,c)=(1,2,3)\), so that
\[
 p_{\rm ZZ}=\left(\frac13,\frac23\right),\qquad
 (A_1,A_2)=\left(\frac23,\frac23\right).
\]
A direct partition at \(1/3,1/2,2/3\) shows that, up to endpoints,
\begin{equation}\label{eq:old-accepted-set}
 E_0=[1/2,2/3),\qquad \mathfrak m(0,0)=\frac16.
\end{equation}
The directional change can also be read without any limiting argument.
For \(0<e<1/15\), set \((\xi,\eta)=e(1/2,1)\).  On the five open cells
cut out by
\[
 0<\frac13<\frac12-\frac e2<\frac23<1-\frac e2<1,
\]
the signed difference
\[
 \{u+1/2+e/2\}+2\{2u+e\}-\{3u\}
\]
is, respectively,
\[
 2u+\frac12+\frac52e,\quad
 2u+\frac32+\frac52e,\quad
 2u-\frac32+\frac52e,\quad
 2u-\frac12+\frac52e,\quad
 2u-\frac52+\frac52e.
\]
Their signs are \(+,+,-,+,-\).  Hence the complete accepted set is
\begin{equation}\label{eq:old-translated-accepted-set}
 E_e=[1/2-e/2,2/3)\mathbin{\dot\cup}[1-e/2,1),
\end{equation}
and therefore
\begin{equation}\label{eq:old-directional-D}
 \mathfrak m(e/2,e)=\frac16+e,\qquad D(1/2,1)=1.
\end{equation}
The second interval in \eqref{eq:old-translated-accepted-set}, born at
the period endpoint, accounts for half of this derivative.

Lemma~\ref{lem:uniform-log-variation} now yields
\begin{equation}\label{eq:old-phi-improvement}
 \phi\left(\frac13+\frac\varepsilon2,
            \frac23+\varepsilon\right)-
 \phi\left(\frac13,\frac23\right)
 =\varepsilon\log(1/\varepsilon)+O(\varepsilon).
\end{equation}
The real and complex roots of \(P(y;p_{\rm ZZ})\) are simple, because
the discriminant of \(2y^3-125y^2-500y-625\) is
\(-1425000000\).  Thus \(r,s,G\) have ordinary \(O(\varepsilon)\)
variation along this one-sided perturbation.  If
\[
 \tau_{\rm ZZ}=-s_{\rm ZZ}-\mathcal C_{\rm ZZ},\qquad
 K_{\rm ZZ}=\frac{r_{\rm ZZ}-s_{\rm ZZ}}{\tau_{\rm ZZ}^2},
\]
the finite logarithm calculation in
Appendix~\ref{subsec:old-point-sign} proves
\begin{equation}\label{eq:K-ZZ-rigorous-sign}
 11<K_{\rm ZZ}<12.
\end{equation}
In particular, the derivative of \(\mathcal B\) with respect to the
saving \(\phi\), with the smooth variables fixed, is
\begin{equation}\label{eq:B-phi-derivative}
 \frac{\partial\mathcal B}{\partial\phi}
 =-\frac{r-s}{(-s-\mathcal C)^2}<0
\end{equation}
at the old point.  Combining \eqref{eq:old-phi-improvement} and
\eqref{eq:B-phi-derivative}, we obtain the rigorous one-sided expansion
\begin{equation}\label{eq:old-B-improvement-expansion}
 \mathcal B\left(p_{\rm ZZ}+\varepsilon(1/2,1)\right)
 =\mathcal B(p_{\rm ZZ})
 -K_{\rm ZZ}\varepsilon\log(1/\varepsilon)+O(\varepsilon).
\end{equation}
Since the logarithm tends to infinity, this proves
\eqref{eq:old-point-variation}.  Notice that only
\(\varepsilon>0\) is used: the perturbation enters the open half-space
\(\alpha+\beta>1\) from its boundary.

\subsection{The tight-slack diagonal refinement}

The preceding calculation identifies a direction, not a finite rational
point.  On the original diagonal \(A_1=A_2\), equivalently \(b=2a\),
write \(a=M\), \(b=2M\).  The nearest integral choice on the admissible
side of the old equality \(a+b=c\) is
\begin{equation}\label{eq:tight-slack-family}
 a=M,\qquad b=2M,\qquad c=3M-1.
\end{equation}
Its relevant integer slacks are
\begin{align*}
 a+b-c&=1,&2b-c&=M+1,\\
 (5c-1)-7b&=M-6,
 &(2a+4b-2c)-(c+1)&=M+2.
\end{align*}
Thus \(M\geq6\) satisfies all the arithmetic inequalities.  Moreover,
\begin{equation}\label{eq:tight-slack-exponent}
 A_1=A_2=\frac{2M}{3M-1}
 =\frac23+\frac{2}{3(3M-1)}.
\end{equation}
Here the fourth endpoint parameter and the degree cutoff are, separately,
\begin{equation}\label{eq:tight-slack-q-d0}
 q=a+2b-c=2M+1,\qquad d_0=2q=4M+2.
\end{equation}
The stationary polynomial in this family is
\begin{equation}\label{eq:tight-slack-stationary}
 (2M+1)y^3-(125M-6)y^2-(500M-25)y-625M.
\end{equation}

The following few values record how the final scale was found; they are
not used to prove the theorem.
\begin{center}
\begin{tabular}{ccl}
\toprule
\(M\)&common exponent \(A_1=A_2\)&exploratory value of \(\mathcal B\)\\
\midrule
\(\infty\)&\(2/3\)&\(7.1032053341370017\ldots\)\\
\(100\)&\(200/299\)&\(7.151305465560188\ldots\)\\
\(1000\)&\(2000/2999\)&\(7.102252364091047\ldots\)\\
\(1857\)&\(1857/2785\)&\(7.101862832356351\ldots\)\\
\bottomrule
\end{tabular}
\end{center}
A refined exploration of \eqref{eq:tight-slack-family} suggested
\(M=1857\).  It gives
\begin{equation}\label{eq:new-exponent-from-family}
 A_1=A_2=\frac{3714}{5570}=\frac{1857}{2785}
 =\frac23+\frac1{8355},
\end{equation}
and, in normalized coordinates,
\begin{equation}\label{eq:new-point-old-direction}
 p_*=p_{\rm ZZ}+\frac1{8355}(1/2,1).
\end{equation}
This table is only a discovery record: it neither exhausts all \(M\)
nor searches all off-diagonal parameters.  The fixed-parameter proof in
Sections~\ref{sec:arithmetic}--\ref{sec:measure} establishes the bound at
the selected rational point.  The local theorem below is likewise not a
global-optimization claim.

\subsection{The twelve exact directional sectors}

We now specialize the endpoint calculation to
\((a,b,c)=(1857,3714,5570)\); then \(q=3715\).  The four velocities in a
direction \(v=(v_1,v_2)\) are
\begin{equation}\label{eq:four-endpoint-velocities}
 \lambda_A=-\frac{v_1}{1857},\qquad
 \lambda_B=-\frac{v_2}{3714},\qquad
 \lambda_C=0,\qquad
 \lambda_Q=-\frac{v_1+2v_2}{3715}.
\end{equation}
Their six pairwise equality lines and primitive rays are
\begin{equation}\label{eq:six-fan-walls}
 \begin{array}{c|c}
 \text{line}&\text{rays}\\ \hline
 v_2=2v_1&\pm(1,2)\\
 v_1=0&\pm(0,1)\\
 929v_1=1857v_2&\pm(1857,929)\\
 v_2=0&\pm(1,0)\\
 3714v_1+3713v_2=0&\pm(3713,-3714)\\
 v_1+2v_2=0&\pm(2,-1).
 \end{array}
\end{equation}
In the counterclockwise order used below, consecutive determinants are
\[
 929,2785,1,3713,3715,1,929,2785,1,3713,3715,1,
\]
so these are exactly twelve nondegenerate sectors.

For completeness we derive the sector coefficients from the endpoint
data.  The complete catalogue of coincident base positions is
\begin{equation}\label{eq:tie-catalogue}
 \begin{array}{c|l}
 \text{types}&\text{common positions on }\mathbb R/\mathbb Z\\ \hline
 A,B&\text{the \(1857\) odd points of the \(B\)-grid}\\
 A,C&1/2\\
 A,Q&1/2\\
 B,C&0,1/2\\
 B,Q&1/2\\
 C,Q&1/10,3/10,1/2,7/10,9/10.
 \end{array}
\end{equation}
These statements follow by cross-multiplying the corresponding members
of \eqref{eq:four-endpoint-families}; the coprimalities
\(\gcd(1857,3715)=\gcd(3714,3715)=1\) leave precisely the displayed
solutions.

For a sorted cyclic endpoint list, let \(\epsilon_j\) be the indicator
on \([e_j,e_{j+1})\), and define the signed endpoint counts
\begin{equation}\label{eq:signed-endpoint-counts}
 n_T=\sum_{e_j\ {\rm of\ type}\ T}(\epsilon_{j-1}-\epsilon_j),
 \qquad T\in\{A,B,C,Q\}.
\end{equation}
Then \eqref{eq:D-endpoint-formula} becomes
\begin{equation}\label{eq:D-from-signed-counts}
 D(v)=n_A\lambda_A(v)+n_B\lambda_B(v)
      +n_C\lambda_C(v)+n_Q\lambda_Q(v).
\end{equation}
The five interval families in Proposition~\ref{prop:E-decomposition}
determine every nonzero base arc.  At a tie, the order of the four
velocities in \eqref{eq:four-endpoint-velocities} determines whether a
zero-length base arc opens under the perturbation.  Counting the left and
right ends of those intervals gives
\[
 \mathfrak m(0)=|E|=\frac{1724689}{13797510}
\]
and the following exact table; the last two columns are obtained from
\eqref{eq:D-from-signed-counts}.

\begin{center}
\small
\begin{tabular}{c|rrrr|cc}
\toprule
\(j\)&\(n_A\)&\(n_B\)&\(n_C\)&\(n_Q\)&\(p_x\)&\(p_y\)\\
\midrule
1&0&-2786&1671&1115&\(-223/743\)&\(206777/1379751\)\\
2&0&-2786&1671&1115&\(-223/743\)&\(206777/1379751\)\\
3&-1857&-929&1671&1115&\(520/743\)&\(-966197/2759502\)\\
4&-1856&-929&1670&1115&\(964897/1379751\)&\(-966197/2759502\)\\
5&-1856&-930&1670&1116&\(4822628/6898755\)&\(-805783/2299585\)\\
6&-1856&-930&1673&1113&\(4828199/6898755\)&\(-802069/2299585\)\\
7&-1856&-928&1671&1113&\(4828199/6898755\)&\(-2409922/6898755\)\\
8&-1857&-928&1671&1114&\(2601/3715\)&\(-2413636/6898755\)\\
9&0&-2785&1671&1114&\(-1114/3715\)&\(2071483/13797510\)\\
10&0&-2785&1671&1114&\(-1114/3715\)&\(2071483/13797510\)\\
11&0&-2784&1671&1113&\(-1113/3715\)&\(345866/2299585\)\\
12&0&-2784&1669&1115&\(-223/743\)&\(68678/459917\)\\
\bottomrule
\end{tabular}
\end{center}
Here \(D(v)=p_xv_1+p_yv_2\) in sector \(j\), and every row has the
cyclic checksum \(n_A+n_B+n_C+n_Q=0\).  To illustrate the count rather
than merely quote it, consider sector 1 and its interior vector
\(v=(2,1)\).  Its velocity order is
\(\lambda_A<\lambda_Q<\lambda_B<\lambda_C\).  Applying this order at
every position in \eqref{eq:tie-catalogue} and tallying the five explicit
index ranges of Proposition~\ref{prop:E-decomposition} gives \(2786\)
\(B\)-type left ends, \(186+557+928=1671\) \(C\)-type right ends, and
\(742+371+2=1115\) \(Q\)-type right ends.  There are no other endpoint
types in the tally.  Hence
\((n_A,n_B,n_C,n_Q)=(0,-2786,1671,1115)\), and
\begin{equation}\label{eq:sample-sector-calculation}
 \begin{split}
 D(v)&=-2786\lambda_B(v)+1115\lambda_Q(v)\\
 &=-\frac{223}{743}v_1
   +\frac{206777}{1379751}v_2.
 \end{split}
\end{equation}
The remaining rows follow in exactly the same way from
\eqref{eq:tie-catalogue}; all operations are integer counts and rational
cross-multiplications.

For clarity, here are all twelve sectors and their linear forms.  The
rays are in counterclockwise order, and each row includes both boundary
rays by continuity.
\begin{center}
\small
\begin{tabular}{c|c|c|l}
\toprule
\(j\)&initial ray&terminal ray&\(D(v)\)\\
\midrule
1&\((1,0)\)&\((1857,929)\)&\(-223v_1/743+206777v_2/1379751\)\\
2&\((1857,929)\)&\((1,2)\)&\(-223v_1/743+206777v_2/1379751\)\\
3&\((1,2)\)&\((0,1)\)&\(520v_1/743-966197v_2/2759502\)\\
4&\((0,1)\)&\((-3713,3714)\)&\(964897v_1/1379751-966197v_2/2759502\)\\
5&\((-3713,3714)\)&\((-2,1)\)&\(4822628v_1/6898755-805783v_2/2299585\)\\
6&\((-2,1)\)&\((-1,0)\)&\(4828199v_1/6898755-802069v_2/2299585\)\\
7&\((-1,0)\)&\((-1857,-929)\)&\(4828199v_1/6898755-2409922v_2/6898755\)\\
8&\((-1857,-929)\)&\((-1,-2)\)&\(2601v_1/3715-2413636v_2/6898755\)\\
9&\((-1,-2)\)&\((0,-1)\)&\(-1114v_1/3715+2071483v_2/13797510\)\\
10&\((0,-1)\)&\((3713,-3714)\)&\(-1114v_1/3715+2071483v_2/13797510\)\\
11&\((3713,-3714)\)&\((2,-1)\)&\(-1113v_1/3715+345866v_2/2299585\)\\
12&\((2,-1)\)&\((1,0)\)&\(-223v_1/743+68678v_2/459917\)\\
\bottomrule
\end{tabular}
\end{center}
Substitution at the rays gives, without rounding,
\begin{center}
\begin{tabular}{c|r@{\qquad}c|r}
\toprule
ray&\(D(r)\)&ray&\(D(r)\)\\
\midrule
\((1,0)\)&\(-223/743\)&\((-1,0)\)&\(-4828199/6898755\)\\
\((1857,929)\)&\(-776458/1857\)&\((-1857,-929)\)&\(-1810807/1857\)\\
\((1,2)\)&\(-557/1379751\)&\((-1,-2)\)&\(-557/1379751\)\\
\((0,1)\)&\(-966197/2759502\)&\((0,-1)\)&\(-2071483/13797510\)\\
\((-3713,3714)\)&\(-7236730/1857\)&\((3713,-3714)\)&\(-1671\)\\
\((-2,1)\)&\(-3247/1857\)&\((2,-1)\)&\(-464/619\)\\
\bottomrule
\end{tabular}
\end{center}
The two adjacent forms agree on each ray.  Exact comparison of these
twelve fractions gives
\begin{equation}\label{eq:exact-kappa}
 \min_r\frac{-D(r)}{\|r\|_1}
 =\kappa:=\frac{557}{4139253},
\end{equation}
with equality at \(r=(1,2)\) and \(r=(-1,-2)\).  If \(v\) lies in the
cone spanned by consecutive rays \(r,r'\), write
\(v=\lambda r+\mu r'\) with \(\lambda,\mu\geq0\).  Linearity on the
cone and the triangle inequality imply
\begin{equation}\label{eq:uniform-D-negative}
 \begin{split}
 D(v)&=\lambda D(r)+\mu D(r')\\
 &\leq-\kappa\{\lambda\|r\|_1+\mu\|r'\|_1\}\\
 &\leq-\frac{557}{4139253}\|v\|_1.
 \end{split}
\end{equation}
In particular \(D(v)<0\) for every nonzero real direction \(v\).

\subsection{Persistence of the analytic construction}

We next verify that the analytic quantities just used really belong to
one neighborhood, rather than to unrelated pointwise constructions.
At \(p_*\), multiplying \eqref{eq:parameter-stationary-polynomial} by
\(5570\) gives the cubic \eqref{eq:stationary-cubic}.  Its negative
discriminant and the exact signs
\[
 5570P(66;p_*)=-5502699<0,\qquad
 5570P(67;p_*)=11983904>0
\]
show that its real root lies in \((66,67)\), while the other two roots
form a simple conjugate pair.  The real-analytic implicit-function
theorem supplies branches \(y_+(p)\) and \(y_c(p)\) on one neighborhood
of \(p_*\).  Shrinking it preserves
\[
 y_+(p)>25,\qquad \Im y_c(p)>0,
\]
and separation from \(0,25,-3\pm4i\).  Hence \(r(p)\) and \(s(p)\) in
\eqref{eq:parameter-r-s} are real analytic there.  The positive root is
equivalent to a unique coefficient saddle \(\zeta(p)\in(0,1)\) through
\[
 y_+(p)=25\frac{(1+\zeta(p))^2}{(1-\zeta(p))^2};
\]
the positive-coefficient and positive-variance argument of
Lemma~\ref{lem:positive-coefficient-powers} consequently persists for
nearby rational parameters.

We also retain the full contour construction, including its endpoints.
To avoid confusing it with the integer \(q\) in
\eqref{eq:q-fourth-endpoint}, write \(z_0=-3+4i\), and put
\begin{equation}\label{eq:local-persistent-arc}
 Z(p)=\frac{z_0}{y_c(p)},\quad
 \lambda_*(p)=\frac1{\Re Z(p)},\quad
 \eta_*(p)=-\frac{\Im Z(p)}{\Re Z(p)-1},\quad
 \gamma_p(\lambda)=
 \frac{z_0\lambda}{1-i\eta_*(p)(1-\lambda)}.
\end{equation}
The strict isolations \eqref{eq:eta-short-isolation} and
\eqref{eq:lambda-short-isolation} persist by continuity, say with
\[
 \frac9{10}<\eta_*(p)<\frac{23}{25}<\frac43,\qquad
 \frac{12}{25}<\lambda_*(p)<\frac{49}{100}.
\]
The definitions give \(\gamma_p(\lambda_*(p))=y_c(p)\).  Moreover,
\begin{equation}\label{eq:local-persistent-arc-geometry}
 \Im\gamma_p(\lambda)=
 \frac{\lambda\{4-3\eta_*(p)(1-\lambda)\}}
 {1+\eta_*(p)^2(1-\lambda)^2}>0,\qquad
 |\gamma_p(\lambda)|\leq5.
\end{equation}

Take the same oriented square-root lifts as in
\eqref{eq:square-root-lifts}, now depending on \(p\):
\[
 \tau_{+,p}(\lambda)=-\sqrt{\gamma_p(\lambda)},\qquad
 \tau_{-,p}(\lambda)=\overline{\tau_{+,p}(\lambda)}.
\]
They join \(-1-2i\) to \(0\) and then \(0\) to \(-1+2i\), and
\(|\tau_{\pm,p}|\leq\sqrt5<5\).  The original vertical segment and the
lifts therefore remain in the pole-free disk \(|t|<5\); Cauchy's theorem
gives the same homotopy as in Section~\ref{sec:asymptotics}.  From
\eqref{eq:gamma-derivative-origin}, uniformly after shrinking the
parameter neighborhood,
\[
 |\tau_{+,p}'(\lambda)|=O(\lambda^{-1/2})
 \qquad(\lambda\downarrow0),
\]
so the lifts are rectifiable with locally uniform length bounds.

Let \(Q_p(\lambda)\) denote the degree-six numerator obtained by clearing
the strictly positive factors from
\(d\{F_p(\gamma_p(\lambda))\}/d\lambda\).  At \(p_*\), the coefficients
of the transformed polynomial
\begin{equation}\label{eq:local-descartes-transform}
 (1+u)^6Q_{p_*}\left(\frac{u}{1+u}\right)
\end{equation}
have the strict signs \((+,+,+,+,-,-,-)\), as proved in
\eqref{eq:Descartes-coefficients}.  These are monomial coefficients of
the transformed polynomial, not Bernstein coefficients.  They depend
real analytically on \(p\), and their signs therefore persist.  Descartes'
rule permits at most one critical point on \(0<\lambda<1\); the saddle
identity supplies one, and the endpoint phase tends to \(-\infty\).
Thus \(y_c(p)\) remains the unique global phase maximum on the arc.

Finally, for a nearby rational parameter represented by integers
\((a,b,c)\), the exact analogue of \eqref{eq:fixed-amplitude-identity} is
\begin{equation}\label{eq:local-uniform-amplitude}
 |R_n(\tau_{+,p}(\lambda))|
 =\frac5{|25-\gamma_p(\lambda)|}
   \exp\{cnF_p(\gamma_p(\lambda))\}.
\end{equation}
Equation \eqref{eq:local-persistent-arc-geometry} gives
\(|25-\gamma_p|\geq20\).  The rectifiability estimate is locally
uniform, so Lemma~\ref{lem:elementary-contour-estimate} gives a constant
\(C_0\), independent of \(n\) and locally uniform in \(p\), such that
\begin{equation}\label{eq:local-uniform-integral-bound}
 |J_n|\leq C_0e^{cns(p)}.
\end{equation}
This proves that the same analytic branch continues to be the contour
rate; it also records explicitly the square-root lift, homotopy,
rectifiability, and fixed-amplitude ingredients used in that conclusion.

It remains only to keep the construction inside the arithmetic chamber.
At \(p_*\), the four margins are
\begin{equation}\label{eq:local-chamber-margins}
 \alpha_*+\beta_*-1=\frac1{5570},\quad
 2\beta_*-1=\frac{929}{2785},\quad
 5-7\beta_*=\frac{926}{2785},\quad
 2\alpha_*+4\beta_*-3=\frac{186}{557}.
\end{equation}
Taking the minimum of each margin divided by the \(\ell^\infty\)-norm of
its linear normal gives a positive \(\ell^1\)-radius contained in all
four half-spaces.  The enclosures of Sections~\ref{sec:prime-saving}
and \ref{sec:asymptotics} also give the strict bounds
\begin{equation}\label{eq:local-positive-rate-margins}
 r(p_*)+\mathcal C(p_*)>3,\qquad
 -s(p_*)-\mathcal C(p_*)>\frac35.
\end{equation}
Lemma~\ref{lem:uniform-log-variation} makes \(\phi\), hence
\(\mathcal C\), continuous at \(p_*\); the other terms are analytic.
After one final shrinking, the saddle, contour, chamber, and positive
denominator conditions therefore hold simultaneously.

\subsection{Strict local minimality}

\begin{theorem}\label{thm:local-minimum-detailed}
There is \(\delta>0\) such that every real \(p\ne p_*\) with
\(\|p-p_*\|_1<\delta\) lies in the admissible chamber and satisfies
\[
 \mathcal B(p)>\mathcal B(p_*).
\]
Thus \(p_*\) is a strict two-dimensional local minimizer, with no
restriction to rational points, straight rays, or a finite list of
directions.
\end{theorem}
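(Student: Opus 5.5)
We will compare the singular logarithmic gain in the prime saving with the ordinary smooth variation of everything else. On the common neighborhood built in the persistence subsection, $r$, $s$ and $G$ are real analytic at $p_*$. The denominator $-s-\mathcal C$ is bounded below by $3/5$ there, by \eqref{eq:local-positive-rate-margins} together with the continuity of $\phi$.

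Write $h=p-p_*$ with $\|h\|_1=\varepsilon$ small. Expanding the quotient $\mathcal B=1+(r+G-\phi)/(-s-G+\phi)$ gives
\[
 \mathcal B(p)-\mathcal B(p_*)
 =\frac{\partial\mathcal B}{\partial\phi}\bigl(\phi(p)-\phi(p_*)\bigr)
 +O(\|h\|_1)+O\bigl((\phi(p)-\phi(p_*))^2\bigr),
\]
where the derivative is taken with the smooth variables $r,s,G$ held fixed. Here
\[
 \frac{\partial\mathcal B}{\partial\phi}
 =-\frac{r(p_*)-s(p_*)}{(-s(p_*)-\mathcal C(p_*))^2}=:-K_*.
\]
The enclosures of the earlier sections show $r(p_*)-s(p_*)>4$, so $K_*>0$.

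Lemma~\ref{lem:uniform-log-variation}, in the form \eqref{eq:uniform-log-variation-h}, gives
\[
 \phi(p)-\phi(p_*)=D(h)\log(1/\varepsilon)+O(\varepsilon)
\]
uniformly in direction. The bound \eqref{eq:uniform-D-negative} gives $D(h)\le-\kappa\varepsilon$ with $\kappa=557/4139253$. It follows that $\phi(p)-\phi(p_*)=O(\varepsilon\log(1/\varepsilon))$, so the quadratic remainder is $o(\varepsilon)$. Substituting,
\[
 \mathcal B(p)-\mathcal B(p_*)\ge K_*\kappa\,\varepsilon\log(1/\varepsilon)-C\varepsilon
\]
for a constant $C$ that is uniform in direction. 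The right side is positive once $\log(1/\varepsilon)>C/(K_*\kappa)$, which fixes $\delta$. We also shrink $\delta$ below the chamber radius obtained from the margins \eqref{eq:local-chamber-margins} and below the persistence neighborhood. Then every $p$ in the ball is admissible, $\mathcal B$ is defined there by \eqref{eq:local-B-definition}, and $p$ is not required to be rational because $\phi$ is defined for real parameters through \eqref{eq:phi-definition}.

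The main obstacle is making every constant uniform over all directions, not only along rays. The uniformity of Lemma~\ref{lem:uniform-log-variation} on the unit diamond, and the single sector-independent $\kappa$, are what handle this. What remains is to bound the Lipschitz constants of $r,s,G$ and the second-order remainder of the quotient expansion on one fixed compact neighborhood. For this I would apply Taylor's theorem with the mean-value form of the remainder to the function $(x,y)\mapsto 1+x/y$ on a compact set where $y\ge3/5$.

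A second point to check is the derivative identification. The paper verified the sign of $K$ only at the old point, through \eqref{eq:K-ZZ-rigorous-sign}. At $p_*$ I would instead read the sign of $K_*$ directly from the enclosures \eqref{eq:r-enclosure}, \eqref{eq:s-enclosure} and \eqref{eq:C-enclosure}.
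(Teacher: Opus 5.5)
Your proposal is correct and follows essentially the paper's own argument. The paper likewise isolates the nonsmooth term by writing $\mathcal B(p)=H(p,\phi(p))$ and Taylor-expands in $(p,w)$ with $\partial_wH=-K_*$; it records $11<K_*<12$ from the same enclosures you propose to use. It then combines the uniform logarithmic variation lemma with the sector-independent bound $D(v)\le-\tfrac{557}{4139253}\|v\|_1$ to obtain $\mathcal B(p_*+\varepsilon v)-\mathcal B(p_*)\ge K_*\kappa\,\varepsilon\log(1/\varepsilon)-C\varepsilon$, and chooses $\delta$ inside the chamber and persistence neighborhoods, exactly as you do; your expansion of $(x,y)\mapsto 1+x/y$ on the half-plane $y\ge 3/5$ is only an equivalent bookkeeping of that Taylor step.
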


\begin{proof}
Separate the only nonsmooth term by defining
\begin{equation}\label{eq:local-H-definition}
 H(p,w)=1+\frac{r(p)+G(p)-w}{-s(p)-G(p)+w}.
\end{equation}
Then \(\mathcal B(p)=H(p,\phi(p))\).  At the candidate,
\begin{equation}\label{eq:local-K-star}
 \partial_wH(p_*,\phi(p_*))
 =-\frac{r(p_*)-s(p_*)}
         {\{-s(p_*)-\mathcal C(p_*)\}^2}
 =:-K_*.
\end{equation}
Exact rational comparison using \eqref{eq:r-enclosure},
\eqref{eq:C-enclosure-recalled}, and \eqref{eq:s-enclosure-recalled}
gives
\begin{equation}\label{eq:local-K-star-bounds}
 11<K_*<12
 \qquad(K_*=11.1919651181036569\ldots).
\end{equation}

Let \(h=\varepsilon v\), where
\(\varepsilon=\|h\|_1\) and \(\|v\|_1=1\).  The remainder in
Lemma~\ref{lem:uniform-log-variation} is uniform in \(v\), and
\[
 |\phi(p_*+\varepsilon v)-\phi(p_*)|
 =O(\varepsilon\log(1/\varepsilon))
\]
uniformly on the unit diamond.  All first and second derivatives of \(H\)
are bounded on one compact subneighborhood.  Taylor's theorem in
\((p,w)\), with \eqref{eq:uniform-log-variation}, therefore gives
\begin{equation}\label{eq:uniform-B-Taylor}
 \begin{split}
 \mathcal B(p_*+\varepsilon v)-\mathcal B(p_*)
 &=-K_*D(v)\varepsilon\log(1/\varepsilon)+O(\varepsilon)\\
 &\quad+O(\varepsilon^2\log^2(1/\varepsilon)),
 \end{split}
\end{equation}
where every constant is independent of \(v\).  The quadratic remainder
is \(o(\varepsilon)\), and \eqref{eq:uniform-D-negative} supplies a
constant \(C<\infty\) such that
\begin{equation}\label{eq:uniform-B-lower-bound}
 \mathcal B(p_*+\varepsilon v)-\mathcal B(p_*)
 \geq K_*\frac{557}{4139253}
       \varepsilon\log(1/\varepsilon)-C\varepsilon.
\end{equation}
Choose \(\delta\) inside all the common neighborhoods constructed above
and so small that
\[
 K_*\frac{557}{4139253}\log(1/\delta)>C.
\]
Then \eqref{eq:uniform-B-lower-bound} is strictly positive whenever
\(0<\varepsilon<\delta\).  Because every displacement, including every
point on a nonlinear approach to \(p_*\), has the representation
\(h=\|h\|_1(h/\|h\|_1)\), this proves the assertion for all real nearby
points.  It proves Theorem~\ref{thm:local-minimum} as well.
\end{proof}

The original exponent coordinates are related by the invertible linear
map \((A_1,A_2)\mapsto(A_1/2,A_2)\), so the theorem is equivalent to
strict local minimality at
\[
 (A_{1,*},A_{2,*})=
 \left(\frac{1857}{2785},\frac{1857}{2785}\right).
\]
As a check on the diagonal direction, the sector table gives
\[
 D(1/2,1)=D(-1/2,-1)=-\frac{557}{2759502};
\]
the two one-sided leading coefficients are consequently
\begin{equation}\label{eq:diagonal-leading-coefficient}
 K_*\frac{557}{2759502}
 =0.0022590759386236\ldots>0.
\end{equation}

\begin{proposition}[Realization by the integral construction]
\label{prop:rational-realization}
Every rational point \(p=(\alpha,\beta)\) in a sufficiently small
neighborhood of \(p_*\) is represented by an admissible integer
construction whose normalized Hata bound is exactly \(\mathcal B(p)\).
\end{proposition}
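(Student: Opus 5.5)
Given a rational \(p=(\alpha,\beta)\) near \(p_*\), write \(\alpha=a/c\), \(\beta=b/c\) with positive integers and \(c\) even. Doubling \((a,b,c)\) if necessary leaves \(p\) unchanged and makes \(cn\) even for every \(n\). The plan is to rerun Sections~\ref{sec:arithmetic}--\ref{sec:measure} verbatim for this triple and check, at each step, exactly which property of \((1857,3714,5570)\) was used. Every such property should be either one of the four chamber inequalities \eqref{eq:integer-chamber} or one of the open conditions already secured on the common neighborhood of the previous subsection.

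\textbf{Arithmetic.} Lemma~\ref{lem:laurent-valuations} needs \(a+b-c\geq0\) and \(4b-2c\geq0\) for integrality; both follow from the chamber.

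Lemma~\ref{lem:finite-field-deletion} and Lemma~\ref{lem:reduced-lcm} used only \(p>5\), \(p^2>d_0n\), and the defining inequality of \(\mathcal P_n\). These are stated for general \(a,b,c\).

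Lemma~\ref{lem:endpoint-coefficients} used \(h<3b/2\), that is \(5b-5c/2<3b/2\), which is \(7b<5c\). This is exactly the chamber condition.

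Lemma~\ref{lem:polynomial-integral} and Lemma~\ref{lem:nonlogarithmic-poles} are formal once \(d_0=2a+4b-2c>0\) and \(hn\) is integral; \(hn\) is integral because \(c\) is even. Hence Proposition~\ref{prop:arithmetic-normalization} holds with \(M_n=2^{4-hn}L_n^{(0)}/\Phi_n\). If \(hn-1<0\), the power of \(2\) only helps; \(2\beta>1\) gives \(h>0\).

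\textbf{Prime saving.} By \eqref{eq:Omega-equals-omega} and \eqref{eq:Omega-phi-scaling}, \(\omega/c=\phi(p)\). Lemma~\ref{lem:periodic-prime-product} applies with \(D=d_0\). Its hypothesis \(\min\ell_k>1/d_0\) is the continuous statement that \(\chi(\alpha t,\beta t,t)=0\) for \(t<1/(2\alpha+4\beta-2)\). Near \(t=0\) the inequality reads \(\tfrac12+(\alpha+2\beta-1)t<0\), which fails, and this persists until the first wrap. I would verify the wrap claim directly: before any fractional part wraps, the difference \(\{X+\tfrac12\}+2\{Y\}-\{Z\}\) equals \(\tfrac12+qt/c\), and each wrap of \(X\) or \(Y\) lowers it. Some care is needed here, since the first accepted point is governed by \(2\alpha+4\beta-2>1\). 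This is the chamber condition, and it should give the accepted set's first left endpoint beyond \(1/d_0\).

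Together with \(\log\lcm(1,\dots,d_0n)=d_0n+o(n)\), these give \(\log M_n/(cn)\to G(p)-\phi(p)=\mathcal C(p)\).

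\textbf{Analysis.} The substitution \eqref{eq:mobius-z-change} gives \(|v_n|\) as a positive constant to the power \(n\), times \([z^{cn}]S_p(z)^n\), where \(S_p=(1+z)^{2a}P^{b}(1-z)^{-d_0}\). Lemma~\ref{lem:positive-coefficient-powers} then gives the exact limit \(r(p)\). The identification of \(\log S_p(\zeta)-c\log\zeta\) plus constants with \(cF_p(y_+(p))\) goes through the relation \(y_+=25(1+\zeta)^2/(1-\zeta)^2\) recorded above. The contour bound \eqref{eq:local-uniform-integral-bound} gives \(\limsup\log|J_n|/(cn)\leq s(p)\).

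Finally, \(v_n\neq0\) by positivity. Lemma~\ref{lem:hata-one-number}, applied with \(\sigma=c(r+\mathcal C)\) and \(\tau=c(-s-\mathcal C)>0\) (positive by \eqref{eq:local-positive-rate-margins} after shrinking), yields the bound \(\mathcal B(p)\).

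\textbf{Main obstacle.} I expect the delicate step to be confirming that no step silently used a numerical feature of the candidate triple. Examples are the identity \(b=2a\) in Proposition~\ref{prop:E-decomposition}, the evenness bookkeeping in the residue sign of \eqref{eq:transformed-residue-form}, and the first-endpoint inequality. The plan is to replace each such feature by the general argument sketched above.
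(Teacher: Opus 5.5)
Your proposal is essentially the paper's proof: an even common denominator, an audit of Section~\ref{sec:arithmetic} against the strict chamber margins, the identification $\omega/c=\phi(p)$ via Lemma~\ref{lem:periodic-prime-product} with $D=d_0$ after a no-wrap check near $u=0$, and then the positive-coefficient saddle, the locally uniform contour bound, and Lemma~\ref{lem:hata-one-number}. Two attributions in your audit are off, though the chamber repairs both: Lemmas~\ref{lem:reduced-lcm}, \ref{lem:polynomial-integral} and \ref{lem:nonlogarithmic-poles} need $d_0\geq c$, not just $d_0>0$, so that every $1\leq j\leq cn$ has $j\leq d_0n<p^2$ and divides $\lcm(1,\dots,d_0n)$; and the first-endpoint bound $\ell_0>1/d_0$ comes from $d_0>2a$ and $d_0>b$ (i.e.\ $2b>c$ and $a+b>c$), which keep $au+\frac12$ and $bu$ from wrapping on $(0,1/d_0]$, whereas the condition $d_0>c$ you cite only excludes a wrap of $cu$, and such a wrap would raise the difference $\frac12+qu$ anyway.
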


\begin{proof}
At the candidate,
\(\mathfrak m(0)=|E|=1724689/13797510>0\) by
\eqref{eq:E-measure}.  Thus the positive weighted integral
\(\Omega(1857,3714,5570)\) is nonzero, and
\eqref{eq:Omega-phi-scaling} gives \(\phi(p_*)>0\).  By the continuity proved in
Lemma~\ref{lem:uniform-log-variation}, we may first shrink the
neighborhood so that \(\phi(p)>0\) throughout it.

Choose positive integers \(a,b\) and a sufficiently large even common
denominator \(c\) such that \(a=c\alpha\) and \(b=c\beta\).  Because the
four chamber inequalities are strict and all expressions are integral,
\begin{equation}\label{eq:general-integer-margins}
 a+b-c\geq1,\qquad 2b-c\geq1,\qquad
 7b\leq5c-1,\qquad d_0-c\geq1,
\end{equation}
where \(d_0=2a+4b-2c\).

We first check that the symbolic arithmetic proofs of
Section~\ref{sec:arithmetic} apply without alteration.  The inequalities
\(a+b>c\) and \(2b>c\) make the complete Laurent bounds integral: in
particular
\[
 4b-2c=2(2b-c)\geq2,\qquad
 2(a+b-c)\geq2.
\]
The inequality \(7b<5c\) is equivalent to
\[
 h:=5b-\frac{5c}{2}<\frac{3b}{2},
\]
which is precisely the estimate used for the endpoint coefficients with
\(k\geq bn\).  The condition \(d_0>c\) gives
\(cn\leq d_0n\), so the degree and least-common-multiple ranges in the
polynomial and pole integrations remain valid.  Finally, evenness of
\(c\) makes both \(cn\) even and \(h\) integral.  Thus the Laurent,
endpoint, reduced-LCM, polynomial, and pole arguments used in the proof
of Proposition~\ref{prop:arithmetic-normalization}, before its numerical
specialization, give
\begin{equation}\label{eq:general-arithmetic-multiplier}
 M_n(p):=2^{4-hn}
 \frac{\lcm(1,2,\ldots,d_0n)}{\Phi_n},
 \qquad
 M_n(p)J_n=U_n+V_n\pi,\quad U_n,V_n\in\mathbb Z,
 \quad V_n=M_n(p)v_n
\end{equation}
for this triple.

The upper cutoff in the removable-prime product also has the required
range.  Indeed,
\begin{align}
 d_0-2a&=4b-2c=2(2b-c)>0,\notag\\
 d_0-b&=2(a+b-c)+b>0,\label{eq:general-cutoff-margins}\\
 d_0-c&>0.\notag
\end{align}
Consequently, if \(0<u\leq1/d_0\), then
\[
 au<\frac12,\qquad bu<1,\qquad cu<1.
\]
None of \(au+1/2,bu,cu\) wraps modulo one, and the left side minus the
right side of the selecting inequality is
\begin{equation}\label{eq:general-initial-gap}
 \left(au+\frac12\right)+2bu-cu
 =\frac12+(a+2b-c)u>0.
\end{equation}
All four inequalities used here are strict at \(u=1/d_0\).  By
continuity they remain valid on
\((0,1/d_0+\epsilon_0]\) for some \(\epsilon_0>0\), and the same
no-wrap calculation still excludes every point of that interval.
It follows that
\[
 E(a,b,c)\cap(0,1/d_0+\epsilon_0]=\varnothing.
\]
On the other hand, \(\Omega(a,b,c)=c\phi(p)>0\), so the periodic set
\(E(a,b,c)\) is nonempty.  Its first endpoint therefore exists and
satisfies \(\ell_0\geq1/d_0+\epsilon_0>1/d_0\).  Lemma
\ref{lem:periodic-prime-product}, now with \(D=d_0\), therefore applies
to the actual product truncated at \(d_0n\) and proves
\begin{equation}\label{eq:general-prime-saving-bridge}
 \lim_{n\to\infty}\frac{\log\Phi_n}{cn}
 =\frac{\omega(a,b,c)}c=\phi(p).
\end{equation}
The normalized multiplier rate is consequently
\(\mathcal C(p)=G(p)-\phi(p)\).

It remains to identify both analytic rates.  The substitution
\eqref{eq:mobius-z-change} gives, for this general triple,
\begin{equation}\label{eq:general-positive-coefficient-identity}
 |v_n|=\frac14
 \left(5^{2(a+b-c)}2^{4b-2c}\right)^n
 [z^{cn}]
 \left\{\frac{(1+z)^{2a}P(z)^b}{(1-z)^{d_0}}\right\}^{\!n}.
\end{equation}
All coefficients of the function in braces are positive, its support has
span one, and its saddle variance is positive.  Lemma
\ref{lem:positive-coefficient-powers} therefore gives an ordinary
coefficient limit; in particular, the right side of
\eqref{eq:general-positive-coefficient-identity} is positive and
\(v_n\ne0\), hence \(V_n\ne0\).  Direct substitution of
\begin{equation}\label{eq:general-y-z-bridge}
 y=25\frac{(1+z)^2}{(1-z)^2}
\end{equation}
in its saddle equation yields
\eqref{eq:parameter-stationary-polynomial}; the positive saddle is
\(y_+(p)\), and simplification of the exponential value gives
\[
 \lim_{n\to\infty}\frac{\log|v_n|}{cn}
 =F_p(y_+(p))=r(p).
\]
On the complex side, the locally uniform square-root deformation and
amplitude estimate \eqref{eq:local-uniform-amplitude}--%
\eqref{eq:local-uniform-integral-bound} give
\[
 \limsup_{n\to\infty}\frac{\log|J_n|}{cn}\leq
 F_p(y_c(p))=s(p).
\]
Thus the general integer forms in
\eqref{eq:general-arithmetic-multiplier} have normalized rates
\[
 \sigma(p)=r(p)+\mathcal C(p),\qquad
 \tau(p)=-s(p)-\mathcal C(p)>0.
\]
Lemma~\ref{lem:hata-one-number} shows that their Hata bound is exactly
\[
 1+\frac{\sigma(p)}{\tau(p)}=\mathcal B(p),
\]
as claimed.
\end{proof}

Rational density is not used in the real local-minimum proof.
Proposition~\ref{prop:rational-realization} only identifies the arithmetic
meaning of nearby rational points.  Nothing here asserts global
optimality over this chamber, over stronger denominator savings, or over
other constructions.

\appendix
\section{Exact algebra and numerical enclosures}\label{sec:exact-calculations}

This appendix prints the finite algebra and the error bounds used in
Sections~\ref{sec:asymptotics} and \ref{sec:local-optimality}.  Every
displayed decimal endpoint below is a terminating decimal and hence an
exact rational number.

\subsection{The real saddle and the coefficient rate}
\label{subsec:real-saddle-calculations}

For \(m(z)=zS'(z)/S(z)\), exact simplification gives
\begin{equation}\label{eq:coefficient-saddle-numerator}
 (1-z^2)P(z)\{m(z)-5570\}=N(z),
\end{equation}
where
\[
 \begin{split}
 N(z)={}&-11140+11152z+102158z^2+167160z^3\\
        &\quad+102158z^4+11152z^5-11140z^6.
 \end{split}
\]
With \(w=(1+z)/(1-z)\), direct expansion gives
\begin{equation}\label{eq:N-to-stationary-cubic}
 (w+1)^6N\left(\frac{w-1}{w+1}\right)
 =\frac4{625}p(25w^2).
\end{equation}
Both sides, in ascending powers of \(w\), equal
\[
 -7428-148556w^2-928476w^4+371500w^6.
\]
The coefficients of \(p\) have one sign change.  Since \(p(0)<0\) and
\(p(y)\to+\infty\) as \(y\to+\infty\), Descartes' rule shows that
\(p\) has exactly one positive root.  Thus
\begin{equation}\label{eq:rho-zeta-relation}
 \rho=25\frac{(1+\zeta)^2}{(1-\zeta)^2}.
\end{equation}

For the contour calculations we use the rational endpoints
\[
 \rho_L=\frac{66321017380598567172217731240}{10^{27}},
 \qquad
 \rho_U=\frac{66321017380598567172217731241}{10^{27}}.
\]
Exact cross-multiplication gives the strict sign margins
\[
 -10^{-20}<p(\rho_L)<-9\cdot10^{-21}<0
 <8\cdot10^{-21}<p(\rho_U)<9\cdot10^{-21}.
\]
Moreover \(p'(66)=16979437>0\), and
\(p''(y)=22290y-464238>0\) on \([66,67]\).  Hence
\(\rho_L<\rho<\rho_U\).

For the finer coefficient evaluation, put
\[
 R_0=66.321017380598567172217731240525
\]
and define the exact rationals
\begin{align*}
 \widehat\rho_-&=R_0+
 31930114787749744963\cdot10^{-50},\\
 \widehat\rho_+&=R_0+
 31930114787749744964\cdot10^{-50}.
\end{align*}
Exact integer substitution gives
\begin{equation}\label{eq:rho-fine-isolation}
 p(\widehat\rho_-)<0<p(\widehat\rho_+),\qquad
 \widehat\rho_-<\rho<\widehat\rho_+.
\end{equation}
Using \eqref{eq:rho-zeta-relation}, exact rational squaring and
cross-multiplication then give
\[
 Z_0=0.239183376438453117718495639373,
\]
\begin{align}
 z_-&=Z_0+15852029007245250806\cdot10^{-50},\notag\\
 z_+&=Z_0+15852029007245250807\cdot10^{-50},\notag\\
 z_-&<\zeta<z_+.\label{eq:zeta-isolation}
\end{align}

Here is the elementary logarithm bound used both here and below.  For
\(|u|<1\), put
\[
 \Lambda_N(u)=2\sum_{j=0}^{N-1}\frac{u^{2j+1}}{2j+1}.
\]
The geometric bound for the omitted tail proves
\begin{equation}\label{eq:atanh-log-bound}
 \left|\log\frac{1+u}{1-u}-\Lambda_N(u)\right|
 \leq\frac{2|u|^{2N+1}}{(2N+1)(1-u^2)}.
\end{equation}
To bound the logarithm of a positive rational \(x\), choose the integer
\(k\) for which \(1\leq x/2^k<2\), set
\(u=(x-2^k)/(x+2^k)\), and use
\[
 \log x=k\log2+\log\frac{1+u}{1-u}.
\]
Then \(0\leq u<1/3\); the same formula with \(u=1/3\) bounds
\(\log2\).  At \(N=120\), the right side of
\eqref{eq:atanh-log-bound} is at most
\[
 \frac{2(1/3)^{241}}{241(1-1/9)}<9.64\cdot10^{-118}.
\]
Applying these finite rational sums to
\[
 2,\quad25,\quad1+z_\pm,\quad P(z_\pm),\quad
 1-z_\pm,\quad z_\pm
\]
and respecting the signs of the logarithmic terms in
\eqref{eq:r-definition} gives an interval of width less than
\(8.4\cdot10^{-50}\), contained in the interval
\eqref{eq:r-enclosure}.  This proves the stated value of \(r\) using
only the printed finite sums and their remainder bound.

\subsection{Construction and isolation of the complex arc}
\label{subsec:complex-arc-calculations}

Vieta's formulas reconstruct the upper nonreal root from the real one:
\begin{equation}\label{eq:Vieta-complex-root}
 U=\frac12\left(\frac{232119}{3715}-\rho\right),\qquad
 U^2+V^2=\frac{232125}{743\rho},\qquad
 V=\sqrt{\frac{232125}{743\rho}-U^2}.
\end{equation}
Rational interval arithmetic from \(\rho_L<\rho<\rho_U\), including
comparison of rational squares for \(V\), gives
\begin{equation}\label{eq:UV-isolation}
 \begin{aligned}
 -\frac{1919728071187574299433227}{10^{24}}
 &<U<
 -\frac{1919728071187574299433226}{10^{24}},\\
 \frac{1012573903136435847419597}{10^{24}}
 &<V<
 \frac{1012573903136435847419598}{10^{24}}.
 \end{aligned}
\end{equation}

We next give an exact algebraic description of \(\eta_*\).  In this
calculation \(\xi\) is an elimination variable, not the arc parameter
\(\lambda\).  Put
\[
 z=\xi+ie(1-\xi),\qquad y=\frac qz.
\]
At the common root corresponding to the desired saddle,
\(\xi=X=1/\lambda_*>1\) and
\(\Im z=Y=e(1-X)\).  The real and imaginary parts of the numerator of
\(p(q/z)\) are
\begin{align}
 A(\xi,e)={}&(3481875e^2-1160625)\xi^3\notag\\
 &+(-9749175e^2-7427800e+2785425)\xi^2\notag\\
 &+(9052725e^2+12998656e+1624833)\xi\notag\\
 &-2785425e^2-5570856e+434655,\label{eq:elimination-A}\\
 B(\xi,e)={}&(-1160625e^3+3481875e)\xi^3\notag\\
 &+(3481875e^3+3713900e^2-9052725e-3713900)\xi^2\notag\\
 &+(-3481875e^3-7427800e^2+3946017e+5570856)\xi\notag\\
 &+1160625e^3+3713900e^2+1624833e+163460.
 \label{eq:elimination-B}
\end{align}
Their exact resultant is
\begin{equation}\label{eq:eta-resultant}
 \operatorname{Res}_{\xi}(A,B)
 =27587592000000000000\,G(e)M(e),
\end{equation}
where
\[
 G(e)=17643e^3-70568e^2+132777e-7430
\]
and
\begin{equation}\label{eq:eta-sextic}
 \boxed{\begin{aligned}
 M(e)={}&61114327234279e^6+476161418762182e^5\\
 &+1249735047872147e^4+1103707235761676e^3\\
 &-510839674922447e^2-1100344963626938e\\
 &-597807070530939.
 \end{aligned}}
\end{equation}
The first factor is not the desired branch.  Indeed,
\[
 G(9/10)=\frac{67770967}{1000}>0,\qquad
 G'(1)=44570>0,
\]
and \(G''(e)=105858e-141136<0\) on \([9/10,1]\).  Thus \(G'\) is
decreasing there and remains at least \(G'(1)>0\), so \(G>0\) throughout
that interval.  Consequently \eqref{eq:eta-resultant} and
\eqref{eq:UV-isolation} imply \(M(\eta_*)=0\).

Let
\[
 e_-=\frac{9102483223156522812}{10^{19}},\qquad
 e_+=\frac{9102483223156522813}{10^{19}}.
\]
Exact substitution gives the useful strict sign margins
\begin{equation}\label{eq:eta-endpoint-signs}
 -\frac{278}{10^6}<M(e_-)<-\frac{277}{10^6}<0
 <\frac{356}{10^6}<M(e_+)<\frac{357}{10^6}.
\end{equation}
Moreover,
\begin{align*}
 M'(e)
 &>3(1103707235761676)(9/10)^2\\
 &\quad-2(510839674922447)-1100344963626938\\
 &=\frac{13999606735726017}{25}>0
 \qquad(9/10\leq e\leq1),
\end{align*}
where the omitted terms are positive.  Thus \(M\) has exactly one root
in this interval, and
\begin{equation}\label{eq:eta-isolation}
 e_-<\eta_*<e_+.
\end{equation}
Substitution of \eqref{eq:UV-isolation} into
\eqref{eq:arc-parameters-UV}, with rational square comparisons, also
gives
\begin{equation}\label{eq:lambda-star-isolation}
 \frac{48021524689625610102}{10^{20}}
 <\lambda_*<
 \frac{48021524689625610103}{10^{20}}.
\end{equation}

\subsection{The exact phase derivative and Descartes' rule}
\label{subsec:exact-phase-derivative}

Temporarily let \(e\in(9/10,1)\), and put
\[
 d(\lambda)=1-ie(1-\lambda),\qquad
 D(\lambda)=|d(\lambda)|^2=1+e^2(1-\lambda)^2.
\]
Because \(q^2+6q+25=0\), direct expansion gives
\begin{equation}\label{eq:arc-factorizations}
 \gamma_e(\lambda)^2+6\gamma_e(\lambda)+25
 =\frac{(1-\lambda)K(\lambda)}{d(\lambda)^2},
 \qquad
 25-\gamma_e(\lambda)=\frac{L(\lambda)}{d(\lambda)},
\end{equation}
where
\[
 \begin{split}
 K(\lambda)={}&25(1-e^2)-50ie\\
 &+\lambda\{25e^2+24e+7+i(18e+24)\},\\
 L(\lambda)={}&25d(\lambda)-q\lambda.
 \end{split}
\]
Set \(H=|K|^2\) and \(J=|L|^2\).  Their exact expansions are
\begin{align}
 H(\lambda)={}&625e^4+1250e^2+625\notag\\
 &+(-1250e^4-1200e^3-900e^2-1200e+350)\lambda\notag\\
 &+(625e^4+1200e^3+1250e^2+1200e+625)\lambda^2,
 \label{eq:H-expanded}\\
 J(\lambda)={}&625e^2+625+(-1250e^2+200e+150)\lambda\notag\\
 &+(625e^2-200e+25)\lambda^2.
 \label{eq:J-expanded}
\end{align}
The factors cleared below are strictly positive.  In fact \(D\geq1\),
while
\[
 \Im K=-50e+\lambda(18e+24)\leq24-32e<-\frac{24}{5},
 \qquad
 \Im L=-25e(1-\lambda)-4\lambda\leq-4.
\]
Thus \(H>(24/5)^2\) and \(J\geq16\) on \(0\leq\lambda\leq1\).

Using \eqref{eq:arc-factorizations}, one obtains
\[
 \begin{split}
 2F(\gamma_e(\lambda))={}&\text{constant}
 +2\alpha\log\lambda+2\beta\log(1-\lambda)
 +\beta\log H-\log J\\
 &+(1-\alpha-2\beta)\log D.
 \end{split}
\]
Differentiating and clearing the now-proved positive denominator gives
\begin{equation}\label{eq:exact-phase-derivative}
 \lambda(1-\lambda)DHJ
 \frac d{d\lambda}\{2F(\gamma_e(\lambda))\}
 =\frac{125}{557}(1+e^2)Q_e(\lambda),
\end{equation}
where \(Q_e(\lambda)=\sum_{k=0}^6q_k(e)\lambda^k\), with
\begin{align*}
 q_0={}&1160625(e^2+1)^3,\\
 q_1={}&-25(e^2+1)^2(278550e^2+185696e+92847),\\
 q_2={}&(e^2+1)(17409375e^4+23212000e^3+16031089e^2\\
 &\hspace{35mm}+15620696e+449386),\\
 q_3={}&-2(11606250e^6+23212000e^5+20456303e^4\\
 &\hspace{22mm}+29566468e^3+8066388e^2+4334052e+2900623),\\
 q_4={}&17409375e^6+46424000e^5+32353659e^4+37250632e^3\\
 &\hspace{22mm}+11447425e^2-2874872e-1000987,\\
 q_5={}&-(6963750e^6+23212000e^5+15379231e^4+6009240e^3\\
 &\hspace{25mm}+3279172e^2-564680e+92875),\\
 q_6={}&e^2(1160625e^4+4642400e^3+3264514e^2\\
 &\hspace{35mm}-1656352e-391727).
\end{align*}
This numerator genuinely has degree six.  Indeed, the bracket in \(q_6\)
is larger than
\[
 3264514(9/10)^2-1656352-391727
 =\frac{59617734}{100}>0.
\]

Now map \(0<\lambda<1\) to \(u>0\) by
\(\lambda=u/(1+u)\), and write
\begin{equation}\label{eq:Descartes-transform}
 (1+u)^6Q_e\left(\frac{u}{1+u}\right)
 =\sum_{k=0}^6C_k(e)u^k.
\end{equation}
Exact collection of coefficients gives
\begin{align}
 C_0={}&1160625(e^2+1)^3,\notag\\
 C_1={}&-175(26528e-26529)(e^2+1)^2,\notag\\
 C_2={}&2(e^2+1)(2212607e^2-3795652e+3126443),\notag\\
 C_3={}&8(418731e^3+420703e^2+923835e-500369),\notag\\
 C_4={}&-128(14851e^2-143914e+168053),\notag\\
 C_5={}&512(15784e-43639),\notag\\
 C_6={}&-7606272.\label{eq:Descartes-coefficients}
\end{align}
The following exact margins prove their signs uniformly on
\(9/10<e<1\):
\begin{gather*}
 26528e-26529<-1,\\
 2212607e^2-3795652e+3126443
 >\frac{150256787}{100}>0,\\
 923835e-500369>\frac{662165}{2}>0,\\
 14851e^2-143914e+168053>38990>0,\\
 15784e-43639<-27855<0.
\end{gather*}
For the second line, the quadratic is increasing because its derivative
at \(9/10\) is \(935203/5>0\); for the fourth, the quadratic is
decreasing and its value at \(1\) is \(38990\).  Therefore the signs of
the coefficients in ascending order are exactly
\((+,+,+,+,-,-,-)\), as used in
Proposition~\ref{prop:unique-phase-maximum}.

\subsection{Resultants and the saddle-value enclosure}
\label{subsec:saddle-value-enclosure}

Let
\[
 m_y=|y_c|^2,\qquad
 m_A=|y_c^2+6y_c+25|^2,\qquad
 m_J=|25-y_c|^2.
\]
The product of the three roots of \(p\), together with
\eqref{eq:Vieta-complex-root}, gives
\begin{equation}\label{eq:my-resultant-value}
 m_y=\frac{232125}{743\rho}.
\end{equation}
For \(f(y)=y^2+6y+25\), Euclidean division gives
\[
 p(y)\equiv505104y+5199600\pmod{f(y)}.
\]
If \(u,v\) are the two roots of \(f\), then \(u+v=-6\) and \(uv=25\).
It follows directly that
\begin{align*}
 \operatorname{Res}_y(p,f)
 &=5199600^2-6(505104)(5199600)+25(505104)^2\\
 &=17656058880000,\\
 \prod_{p(r)=0}f(r)&=\frac{706242355200}{552049}.
\end{align*}
Dividing out the real-root factor gives
\begin{equation}\label{eq:mA-resultant-value}
 m_A=\frac{706242355200}
 {552049(\rho^2+6\rho+25)}.
\end{equation}
Finally \(p(25)=-111400000\), and therefore
\begin{equation}\label{eq:mJ-resultant-value}
 m_J=\frac{p(25)}{3715(25-\rho)}
 =\frac{22280000}{743(\rho-25)}.
\end{equation}
Substitution of the rational endpoints \(\rho_L,\rho_U\), with the
monotone orientation reversed where necessary, yields
\begin{equation}\label{eq:modulus-enclosures}
 \begin{aligned}
 \frac{4710661776618520503155596}{10^{24}}
 &<m_y<
 \frac{4710661776618520503155597}{10^{24}},\\
 \frac{265339959117980760581398972}{10^{24}}
 &<m_A<
 \frac{265339959117980760581398973}{10^{24}},\\
 \frac{725697065335997235474816912}{10^{24}}
 &<m_J<
 \frac{725697065335997235474816913}{10^{24}}.
 \end{aligned}
\end{equation}
The phase value is
\begin{equation}\label{eq:s-from-moduli}
 s=\frac{1857}{11140}\log m_y
   +\frac{1857}{5570}\log m_A-\frac12\log m_J.
\end{equation}
Using \(N=120\) in \eqref{eq:atanh-log-bound}, evaluating at the exact
outer endpoints in \eqref{eq:modulus-enclosures}, and reversing the
endpoints for the negative last term gives an interval of width less
than \(3.74\cdot10^{-26}\), contained strictly in
\begin{equation}\label{eq:s-rational-enclosure}
 -\frac{1174543941310025108442583231}{10^{27}}
 <s<
 -\frac{1174543941310025108442583192}{10^{27}}.
\end{equation}
This is \eqref{eq:s-enclosure} and fixes the safe rounding direction for
the later irrationality-measure estimate.

\subsection{A rigorous sign calculation at the old point}
\label{subsec:old-point-sign}

We record a short exact enclosure needed only for the one-sided
comparison with the Zeilberger--Zudilin point.  At
\(p_{\rm ZZ}=(1/3,2/3)\), the stationary polynomial, after multiplication
by \(3\), is
\[
 p_0(y)=2y^3-125y^2-500y-625.
\]
It has one positive root \(\rho_0\), by Descartes' rule, and exact
substitution of the terminating decimals
\[
 r_- =66.33950152462140818,\qquad
 r_+ =66.33950152462140819
\]
gives
\[
 p_0(r_-)<0<p_0(r_+).
\]
More explicitly, the two exact values are
\[
 -\frac{2206947636853826486502861778769572071}
 {62500000000000000000000000000000000000000000000000}
\]
and
\[
 \frac{28947925875450012108168704650214973259}
 {500000000000000000000000000000000000000000000000000},
\]
respectively.  Thus \(r_-<\rho_0<r_+\).

Vieta's formulas and one elementary resultant calculation give
\begin{equation}\label{eq:old-point-moduli}
 m_y=\frac{625}{2\rho_0},\qquad
 m_A=\frac{1280000}{\rho_0^2+6\rho_0+25},\qquad
 m_J=\frac{30000}{\rho_0-25}.
\end{equation}
Indeed,
\(\operatorname{Res}(p_0,y^2+6y+25)=5120000\), while
\(p_0(25)=-60000\).  It follows that
\begin{align}
 r_{\rm ZZ}
 &=\frac13\log\rho_0+
   \frac23\log(\rho_0^2+6\rho_0+25)-\log(\rho_0-25),
 \label{eq:old-r-formula}\\
 s_{\rm ZZ}
 &=\frac16\log m_y+\frac13\log m_A-\frac12\log m_J.
 \label{eq:old-s-formula}
\end{align}
The old saving and normalization rate are
\begin{align}
 \omega_{\rm ZZ}
 &=\frac{\pi}{2\sqrt3}-\log\frac{3\sqrt3}{4},
 \label{eq:old-omega-formula}\\
 \mathcal C_{\rm ZZ}
 &=\frac{4-\frac52\log2-\omega_{\rm ZZ}}3.
 \label{eq:old-C-formula}
\end{align}

Here is a wholly rational way to bound the only new constant \(\pi\)
in \eqref{eq:old-omega-formula}.  Machin's identity is
\[
 \pi=16\arctan\frac15-4\arctan\frac1{239}.
\]
For \(0<x<1\), the alternating series
\[
 A_N(x)=\sum_{j=0}^{N-1}\frac{(-1)^jx^{2j+1}}{2j+1}
\]
has error of sign \((-1)^N\) and magnitude less than
\(x^{2N+1}/(2N+1)\).  Taking \(N=40\), and using the exact square
comparisons
\[
 1.7320508075688772935<\sqrt3<
 1.7320508075688772936,
\]
bounds every nonrational quantity in
\eqref{eq:old-r-formula}--\eqref{eq:old-C-formula}.  Applying the finite
logarithm sum \eqref{eq:atanh-log-bound} with \(N=120\), always choosing
the endpoint dictated by monotonicity, gives by rational arithmetic
\begin{equation}\label{eq:old-coarse-rate-box}
 \begin{gathered}
 3.3306<r_{\rm ZZ}<3.3308,\qquad
 -1.1750<s_{\rm ZZ}<-1.1748,\\
 0.5405<\mathcal C_{\rm ZZ}<0.5407.
 \end{gathered}
\end{equation}
All series and their remainder signs have been printed above, so
\eqref{eq:old-coarse-rate-box} is a finite rational calculation.
In particular,
\[
 0.6341<\tau_{\rm ZZ}:=-s_{\rm ZZ}-\mathcal C_{\rm ZZ}<0.6345,
\qquad
 4.5054<r_{\rm ZZ}-s_{\rm ZZ}<4.5058.
\]
Finally,
\[
 4.5054-11(0.6345)^2=0.07690725>0,
\qquad
 12(0.6341)^2-4.5058=0.31919372>0.
\]
These exact decimal identities prove
\[
 11<
 \frac{r_{\rm ZZ}-s_{\rm ZZ}}{\tau_{\rm ZZ}^2}
 <12,
\]
which is the rigorous sign assertion used in
\eqref{eq:K-ZZ-rigorous-sign}.

\section*{Acknowledgments}
The author is grateful to Professor Bin Dong and Haocheng Ju of Peking University
for their guidance.

\bibliographystyle{amsplain}
\bibliography{references}

\end{document}